\newtheorem{teor}{Theorem}[section]
\newtheorem{prop}[teor]{Proposition}
\newtheorem{coro}[teor]{Corollary}
\theoremstyle{definition}
\newtheorem{defi}[teor]{Definition}
\newtheorem{nota}[teor]{Remark}
\newtheorem{notas}[teor]{Remarks}
\numberwithin{equation}{section}
\newcommand{\N}{\mathbb{N}}
\newcommand{\R}{\mathbb{R}}
\renewcommand{\S}{\mathbb{S}}
\newcommand{\Y}{\mathbb{Y}}
\newcommand{\mA}{\mathcal{A}}
\newcommand{\mB}{\mathcal{B}}
\newcommand{\mC}{\mathcal{C}}
\newcommand{\mD}{\mathcal{D}}
\newcommand{\mI}{\mathcal{I}}
\newcommand{\mJ}{\mathcal{J}}
\newcommand{\mK}{\mathcal{K}}
\newcommand{\mM}{\mathcal{M}}
\newcommand{\mN}{\mathcal{N}}
\newcommand{\mO}{\mathcal{O}}
\newcommand{\mR}{\mathcal{R}}
\newcommand{\mS}{\mathcal{S}}
\newcommand{\mT}{\mathcal{T}}
\newcommand{\mU}{\mathcal{U}}
\newcommand{\mV}{\mathcal{V}}
\newcommand{\mX}{\mathcal{X}}
\newcommand{\mminv}{\mathfrak{M}_{\text{\rm{inv}}}}
\newcommand{\mmerg}{\mathfrak{M}_{\text{\rm{erg}}}}
\newcommand{\alfaa}{\alpha_\mA}
\newcommand{\betaa}{\beta_\mA}
\newcommand{\alfam}{\alpha_\mM}
\newcommand{\betam}{\beta_\mM}
\newcommand{\malfa}{\mM^\alpha}
\newcommand{\mbeta}{\mM^\beta}
\newcommand{\ep}{\varepsilon}
\newcommand{\lb}{\lambda}
\newcommand{\pu}{{\cdot}}
\newcommand{\W}{\Omega}
\newcommand{\w}{\omega}
\newcommand{\ww}{\wit\omega}
\newcommand{\bw}{\bar\omega}
\newcommand{\wt}{\w\pu t}
\newcommand{\ws}{\w\pu s}
\newcommand{\wwt}{\ww\pu t}
\newcommand{\wit}{\widetilde}
\newcommand{\n}[1]{\|#1\|}
\newcommand{\lsm}{\left[\begin{smallmatrix}}
\newcommand{\rsm}{\end{smallmatrix}\right]}
\newcommand{\dist}{\text{\rm dist}}
\DeclareMathOperator{\Supp}{Supp}
\begin{document}
\title[Stability and chaos in nonhomogeneous linear dissipative ODEs]
{Uniform stability and chaotic dynamics in nonhomogeneous linear dissipative
scalar ordinary differential equations}
\author[J.~Campos]{Juan Campos}
\author[C.~N\'{u}\~{n}ez]{Carmen N\'{u}\~{n}ez}
\author[R.~Obaya]{Rafael Obaya}
\address[J. Campos]
{Departamento de Matem\'{a}tica Aplicada, Universidad de
Granada, Avenida de la Fuente Nueva S/N, 18071 Granada, Spain}
\address[C.~N\'{u}\~{n}ez and R.~Obaya]
{Departamento de Matem\'{a}tica Aplicada, Universidad de
Va\-lladolid, Paseo del Cauce 59, 47011 Valladolid, Spain}
\address[]{}
\email[Juan Campos]{campos@ugr.es}
\email[Carmen N\'{u}\~{n}ez]{carmen.nunez@uva.es}
\email[Rafael Obaya]{rafoba@wmatem.eis.uva.es}
\thanks{Juan Campos is partly supported by the the MINECO-Feder (Spain)
project RTI2018-098850-B-I00, and by the Junta de Andaluc\'{\i}a (Spain)
under projects PY18-RT-2422 and B-FQM-580-UGR20.
Carmen N\'{u}\~{n}ez and Rafael Obaya are partly supported Ministerio de Ciencia e
Innovaci\'{o}n (Spain) under project PID2021-125446NB-100, and by the University
of Valladolid under project PIP-TCESC-2020.}
\subjclass[2020]{
34D05, %Asymptotic properties of solutions to ordinary differential equations
34D45, %Attractors of solutions to ordinary differential equations
37B55, %Nonautonomous dynamical systems
37A05, %Dynamical aspects of measure-preserving transformations
37D05, %Dynamical systems with hyperbolic orbits and sets
37D45, %Strange attractors, chaotic dynamics
37B25  %Lyapunov functions and stability; attractors, repellers
%34C23, %Bifurcation theory, en ODEs
%34K20, %Stability theory for FDEs
%34K14, %Almost and pseudo-periodic solutions for FDEs
%37G35, %Attractors and their bifurcations
%92B20  %Neural networks, artificial life and related topics
}
\date{}
\begin{abstract}
The paper analyzes the structure and the inner long-term dynamics
of the invariant compact sets for the skewproduct flow induced by a
family of time-dependent ordinary differential equations
of nonhomogeneous linear dissipative type. The main assumptions
are made on the dissipative term and on the homogeneous linear term of the equations.
The rich casuistic includes the uniform stability of the invariant compact sets,
as well as the presence of Li-Yorke chaos and Auslander-Yorke chaos inside the attractor.
\end{abstract}
\keywords{Nonautonomous ordinary differential equations, Dissipativity and global attractor,
Chaotic dynamics, Ergodic theory, Laypunov exponents}
\maketitle
%%%%%%%%%%%%%%%%%%%%%%%%%%%%%%%%%%%%%%%%%%%%%%%%%%%%%%%%%%%%%%%%%%%%%%%%%%%%%%%%%%%%%
%%%%%%%%%%%%%%%%%%%%%%%%%%%%%%%%%%%%%%%%%%%%%%%%%%%%%%%%%%%%%%%%%%%%%%%%%%%%%%%%%%%%%
%%%%%%%%%%%%%%%%%%%%%%%%%%%%%%%%%%%%%%%%%%%%%%%%%%%%%%%%%%%%%%%%%%%%%%%%%%%%%%%%%%%%%
%%%%%%%%%%%%%%%%%%%%%%%%%%%%%%%%%%%%%%%%%%%%%%%%%%%%%%%%%%%%%%%%%%%%%%%%%%%%%%%%%%%%%
\section{Introduction}
The mathematical literature collects many different notions of chaos,
all of which share a common target: each definition takes into account
different properties of the long-term behavior of the system under study,
which, combined, imply the unpredictability of the dynamics due to divergence
of initially nearby orbits. There are also many different approaches to the
concept of stability for dynamical systems, but in this case
the subjacent idea is clearer and more globally accepted: initially nearby
orbits remain close. Hence it seems correct to say that, at least to some extent,
chaos and stability are opposite terms.
\par
This work concerns the long-term dynamics of a quite precise
mathematical model for which both situations (chaos and uniform stability)
are possible. Our dynamical system is generated by the solutions
of the family of nonautomonous (in the sense of time-dependent)
scalar dissipative ordinary differential equations
\begin{equation}\label{1.familia}
 x'=a(\wt)\,x+b(\wt)+g(\wt,x)\,,\qquad \w\in\W\,,
\end{equation}
where $\W$ is a compact metric space, $\sigma\colon\R\times\W\to\W\,,
(t,\w)\mapsto \sigma(\wt)=:\wt$ defines a minimal flow on $\W$,
$a,b\colon\W\to\R$ are continuous functions, and $g\colon\W\times\R\to\R$
is a smooth dissipative term. The analysis is made under the assumptions
$\int_\W a(\w)\,dm\le 0$ for any $\sigma$-ergodic measure on $\W$ and
decreasing behavior of $g$ with respect to the state variable $x$.
\par
We will consider two cases. The first one occurs
when the dissipation is negligable as long as the state remains in $[r_1,r_2]$
(since $g$ vanishes at the set $\W\times[r_1,r_2]$ with $r_1<r_2$) and,
at the same time, the dissipation is active and dominant with respect to the linear
term outside that set of states. Since the restriction of the equation to
$\W\times[r_1,r_2]$ is linear and nonhomogeneous, we say that \eqref{1.familia}
provides a nonautonomous nonhomogeneous linear dissipative model.
This is the case more interesting for our analysis, since the casuistic is
richer. The second case, which we will call purely dissipative,
occurs when $g$ vanishes exactly at the points of $\W\times\{r\}$
(so that $r_1=r_2=r$), which in general makes simpler the structure
of the attractor.
\par
The family \eqref{1.familia} generates the skewproduct flow
\[
 \tau\colon\mU\subseteq\R\times\W\times\R\to\W\times\R\,,\quad
 (t,\w,x_0)\mapsto (\wt,x(t,\w,x_0))\,,
\]
where $\mI_{\w,x_0}\to\R\,,\;t\mapsto x(t,\w,x_0)$ is the maximal solution
of the equation \eqref{2.ecescalar} corresponding to $\w$
with $x(0,\w,x_0)=x_0$, and $\mU$ is the open set
$\bigcup_{(\w,x_0)\in\W\times\R}\mI_{\w,x_0}$.
\par
The analysis of a family of equations like \eqref{1.familia}, or,
more generally, of the type $x'=f(\wt,x)$, is a classical tool in the
analysis of a single nonautonomous differential equation
$x'=f_0(t,x)$. Under some regularity conditions on $f_0$ which the translated
functions $f_t(s,x):=f_0(t+s,x)$ inherit, the {\em hull of $f_0$},
given by the closure in the compact open topology on $C(\R^2,\R)$
of the set $\{f_t\,|\;t\in\R\}$, turns out to be a compact metric space
$\W$, and the time-translation $\R\times\W\to\W,\;(t,\w)\mapsto\wt:=\w_t$
defines a global continuous flow. By representing $f(\w,x):=\w(0,x)$ we
obtain a family $x'=f(\wt,x)$ (i.e., $x'=\w(t,x)$) which includes the initial
equation. The function $f_0$ is {\em time-recurrent} if the flow on its hull
$\W$ is minimal, as we assume in this paper. This is for instance the case if
$f_0$ is, roughly speaking, almost periodic in $t$ uniformly
in $x$; but a minimal hull may come from other types of functions.
By being a bit more careful in the hull construction, we obtain
a family of the type \eqref{1.familia} if the starting point is
$x'=a_0(t)\,x+b_0(t)+g_0(t,x)$.
This collective formulation allows us to use techniques of
topological dynamics and ergodic theory in the analysis of the
long-term behavior of the orbits of the flow $\tau$, which include
the graphs of the solutions of the initial equation.
In this paper, we choose the (more general) approach of
not to assume that $\W$ is the hull of an initial function.
\par
The dissipative character of $\tau$, due to the hypothesis on $g$,
implies the existence of a global
attractor $\mA$. Our main objective is the description of the structure
and internal dynamics of the compact invariant subsets $\mK\subseteq\mA$.
In some cases, the presence of chaos is precluded:
there appear uniformly exponentially stable sets
on which the dynamics reproduces that of $(\W,\sigma)$,
or sets $\mK$ which are uniformly (not exponentially) stable.
But, in other cases (in the linear dissipative case), we find
compact invariant subsets $\mK\subseteq\mA$
on which the dynamics is highly complex, with the possible occurrence of
different types of chaos. This phenomenon (which cannot occur if
the functions $a,\,b $ and $g$ of \eqref{1.familia} are
autonomous or time-periodic) shows that unpredictability
can be a natural and expected ingredient in the dynamics
of simple nonautonomous mathematical models, which in general are
better adapted to the real world than the autonomous ones.
\par
Many of the notions of chaos on an invariant
compact subset require a positive upper Lyapunov exponent for the corresponding
linearized system, in order to obtain an exponential rate of
divergence of the forward orbits starting nearby in the phase space.
This behavior is not possible under the assumptions we make
on \eqref{1.familia}, which we will precise in Section \ref{3.sec}.
But some of the notions of chaos do not require this condition. In this paper, we
describe some conditions on the function $a$ of the
linear part or the equations which imply, in one of the possible
dynamical situations, the presence of Li-Yorke
chaos and of Auslander-Yorke chaos in a \lq\lq large part"~of the attractor.
These notions of chaos are compatible with
null upper Lyapunov exponents of the compact invariant sets
on which the chaos appears. Roughly speaking, Li-Yorke chaos \cite{liyo}
appears on a compact invariant set when this set contains an uncountable subset
of points such that any pair of them is Li-Yorke chaotic; i.e.,
it gives rise to two orbits which approach each other and
separate from each other alternatively on infinitely many
intervals of time becoming indistinguishable.
The notion of Li-Yorke chaos was introduced in \cite{liyo} in 1975
for transformations, and it is easily adapted to (semi)flows.
The interested reader can find in \cite{bgkm}, \cite{akko}, \cite{koly},
and the many references therein, some dynamical properties
associated to Li-Yorke chaos and its relation with
other notions of chaotic dynamics.
Auslander-Yorke chaos \cite{auyo}
occurs on topologically transitive flows on compact metric
spaces when the flow is sensitive with respect to initial
conditions. The idea was trying to capture some representative properties of
the notion of turbulence of fluids given by Ruelle and Takens in \cite{ruta}.
The abstract formulation of \cite{auyo} makes this notion applicable to
a much more general dynamical framework.
Among the many works devoted to characterize this type of chaos and to analyze
its dynamical consequences, as well as to establish connections
and differences with other types of chaos, we mention \cite{glwe4}
(which is central to our approach in this paper), \cite{gljk},
\cite{mila}, and references therein.
\par
In the rest of this introduction, we describe the structure of the
paper, which is organized in two sections,
as well as the main dynamical properties which we prove.
\par
Section \ref{2.sec} is a long preliminary section,
divided in seven parts. Its length is due in part to the many different
concepts and already known properties
needed for the statements and proofs of our main results. First,
we recall basic and (more or less) standard notions
on topological dynamics, ergodic theory, skewproduct flows,
stability, dissipativity and global attractors, exponential dichotomy,
Lyapunov exponents, Sacker and Sell spectrum, hyperbolicity of minimal
subsets\ldots~And then we continue with the description of the
less known nonempty set $\mR_m$ of those maps $a\colon\W\to\R$ which
will allow us to detect the
presence of chaotic invariant subsets, and with the definitions
and basic properties of Li-Yorke chaos (also in measure in the case of a
skewprodut flow) and of Auslander-Yorke chaos. The subindex $m$ refers to
a $\sigma$-invariant measure on which the definition of the set $\mR_m$ depends.
\par
The structure of this preliminary section
is better described at its first paragraphs. We point out here that,
in addition to this large number of notions and already known properties,
Section \ref{2.sec} includes the detailed proofs of three new results,
fundamental to our purposes. The first one, Theorem \ref{2.teornovacio},
shows that the sets $\mR_m$ are nonempty if the flow on $\W$ is non periodic,
and contain functions
with null Sacker and Sell spectrum. The second one, Theorem \ref{2.teordensidad},
refers to some extra properties of the maps $a\in\mR_m$, which will allow
us to emphasize that the Li-Yorke chaos which we detect
is \lq\lq quite more chaotic"~than what the initial definition
requires. We will explain this better in due time.
The third result, Theorem \ref{2.teorAYsec3}, determines a series of
compact subsets which are appropriate to detect the presence of Auslander-Yorke
chaos, given by the supports of certain ergodic measures.
\par
The main results of the paper are stated and proved in Section \ref{3.sec},
which begins with the precise description of the conditions imposed
on the dissipative term $g$ of \eqref{1.familia}:
different degrees of smoothness, vanishing set given by $\W\times[r_1,r_2]$,
dissipativity character, and (strictly or not) decreasing behavior outside
$\W\times[r_1,r_2]$. The last condition is not needed in our first three results.
Theorem \ref{3.teorexisteA} establishes the existence of
a global attractor, which thanks to the minimality assumed of the base flow
takes the shape
\[
 \mA=\bigcup_{\w\in\W}\big(\{\w\}\times[\alfaa(\w),\betaa(\w)]\big)\,,
\]
for two semicontinuous functions $\alfaa,\betaa\colon\W\to\R$
with $\tau$-invariant graphs. Theorem \ref{3.teortapas}
analyzes the properties of two minimal sets, $\malfa$ and $\mbeta$,
associated to the covers of $\mA$ (a tool for our main results),
and Theorem \ref{3.teortodos-} shows
that the unique situation in which all the minimal sets have
negative Lyapunov exponent is that of existence of a unique minimal set,
which is given by the uniformly
exponentially stable graph of a continuous function
$\eta\colon\W\to\R$, and which coincides with the attractor.
\par
With the condition on the monotonicity of $g$ in force from now on,
we first analize the dynamical situation arising when $\int_\W a(\w)\,dm<0$
for every $\sigma$-ergodic measure $m$: Theorem \ref{3.teornegativo}
shows that the upper Lyapunov exponent of every minimal sets is
negative, so that the situation is that of the end of the previous
paragraph.
\par
The rest of the paper analyzes the situation occurring when
$\int_\W a(\w)\,d\wit m\le 0$ for every $\sigma$-ergodic measure $\wit m$
and there exists one, say $\wit m$, with $\int_\W a(\w)\,dm= 0$.
Two global dynamical possibilities arise. The first one, which can only
occur if $r_1<r_2$, corresponds to the existence of
infinitely many minimal sets. All of them are contained in $\W\times[r_1,r_2]$
and are given by the graphs of the functions $\eta_c=c\,\alfaa+(1-c)\,\betaa$
for $c\in[0,1]$, which are continuous; and the union of all these
minimal sets, which are uniformly stable, form the global attractor.
Theorem \ref{3.teorSup0-lamin} explores this situation.
The second possibility, richer in casuistic, arises when
$\malfa=\mbeta$ is the unique $\tau$-minimal set, which is not necessarily
a copy of the base, and which may or may not coincide with the global attractor.
In particular, the global attractor is a pinched set; that is, its
section over the base reduces to a singleton for at least one element of $\W$.
These properties and some of their dynamical consequences are described
in Theorem \ref{3.teorSup0-pinched}.
\par
When, in addition, the family is linear dissipative and $a\in\mR_m$,
Li-Yorke chaos and Auslander-Yorke chaos may appear, as we explain
in Theorems \ref{3.teorSup0-chaosLY} and \ref{3.teorSup0-chaosAY}.
More precisely, if under these conditions the unique minimal set
is contained in $\W\times[r_1,r_2]$ and at least of one of its covers
is at a positive distance from $\W\times(\R-[r_1,r_2])$,
then the attractor is \lq\lq strongly"~Li-Yorke chaotic, in the following
sense: there exists a subset $\W_{LY}\subset\W$ with full measure
$m$ such that, for every $\w\in\W_{LY}$, any two points of
$\{\w\}\times[\alfaa(\w),\betaa(\w)]$ form a Li-Yorke chaotic pair.
Moreover, making use of the above mentioned Theorem \ref{2.teordensidad},
we explore the internal dynamics in $\mA$ in order to confirm the
physical observability of the Li-Yorke chaos, and hence its potential
relevance in applications. More precisely, we will prove the positive density
in $\R$ of two sets of times for $m$-almost point of the base:
those at which the forward orbits associated
to every Li-Yorke chaotic pairs (sharing the base point)
are \lq\lq clearly separated", and
those at which these orbits are \lq\lq as close as desired".
\par
Finally, under the same hypotheses, we detect Auslander-Yorke chaos in
infinitely many invariant compact subsets $\mS_c\subset\mA$ for every $c\in[0,1]$
excepting, perhaps, a particular value $c_0$. These (also pinched) sets
are transitive: they admit a dense forward semiorbit.
Besides this, the union $\wit\mS$ of all these sets is a chain recurrent
set, supporting an invariant measure $\wit\mu$,
composed by sensitive points, and with a dense subset of
generic points. These properties can be understood as
a weak version of the classical notion of
chaos introduced by Devaney in \cite{deva}. In addition,
$\wit\mS$ fills an \lq\lq important part"~of
$\mA$, which shows that also this chaotic phenomenon has physical relevance.
%%%%%%%%%%%%%%%%%%%%%%%%%%%%%%%%%%%%%%%%%%%%%%%%%%%%%%%%%%%%
%%%%%%%%%%%%%%%%%%%%%%%%%%%%%%%%%%%%%%%%%%%%%%%%%%%%%%%%%%%%
%%%%%%%%%%%%%%%%%%%%%%%%%%%%%%%%%%%%%%%%%%%%%%%%%%%%%%%%%%%%
%%%%%%%%%%%%%%%%%%%%%%%%%%%%%%%%%%%%%%%%%%%%%%%%%%%%%%%%%%%%
\section{Preliminaries}\label{2.sec}
This long preliminary section is organized in seven parts. The
first four contain general results, required in Section \ref{3.sec}
for the description of the global dynamics for the equations
of the Introduction. The last three, less standard,
present concepts, known properties, and new results which will
be used to analyze the possible presence of chaotic behavior.
\par
The basic concepts and properties of
topological dynamics and measure theory, with special focus on
skewproduct flows defined from a family of scalar nonautonomous
ordinary differential equations, are summarized in the first
two subsections, where we will also fix some notation. Good
references for their contents are \cite{nest}, \cite{elli},
\cite{sase2,sase1}, \cite{walt}, \cite{mane}, \cite{shyi4},
and references therein.
\par
As explained in the Introduction, our main results are formulated
under different assumptions on the linear homogenous
component of the family of equations. In Subsection \ref{2.subseclin}
we summarize the required notions and properties concerning
exponential dichotomy, Sacker and Sell spectrum, and Lyapunov exponents,
which can be found in \cite{copp1} and \cite{jops}.
Subsection \ref{2.subsecescalar} recalls some particular properties
of minimal sets for a skewproduct flow in the scalar case, and includes,
for the reader's convenience, a proof of a classical result relating the
uniform exponential stability of these minimal sets with the
sign of their Lyapunov exponents.
\par
In Subsection \ref{2.subsecR} we introduce a set of continuous
functions which will provide us with an adequate framework to
detect the presence of the two types of chaos mentioned in the Introduction:
Li-Yorke chaos, described in Subsection \ref{2.subsecLY}, and Auslander-Yorke
chaos, described in Subsection \ref{2.subsecAY}. As we mentioned in
the introduction, besides basic concepts and known properties, Subsections
\ref{2.subsecR} and \ref{2.subsecAY} present some new results
which we will use in Section \ref{3.sec} but which are valid
for a setting more general than that there considered. The contents
of these results are explained in the corresponding subsections.
%%%%%%%%%%%%%%%%%%%%%%%%%%%%%%%%%%%%%%%%%%%%%%%%%%%%%%%%%%%%
%%%%%%%%%%%%%%%%%%%%%%%%%%%%%%%%%%%%%%%%%%%%%%%%%%%%%%%%%%%%
\subsection{Basic concepts on flows}\label{2.subsecbasic}
Let $\W$ be a complete metric space, and let $\dist_\W$ be
the distance on $\W$. A ({\em real and continuous})
{\em flow\/} on $\W$ is a continuous map
$\sigma\colon\R\!\times\!\W\to\W,\; (t,\w)\mapsto\sigma(t,\w)$
such that $\sigma_0=\text{Id}$ and $\sigma_{s+t}=\sigma_t\circ\sigma_s$
for each $s,t\in\R$, where $\sigma_t(\w):=\sigma(t,\w)$.
The flow is {\em local\/} if the map $\sigma$
is defined, continuous, and satisfies the previous properties on an open subset
of $\R\!\times\!\W$ containing $\{0\}\!\times\!\W$.
\par
Let $\mU\subseteq\R\!\times\!\W$ be the domain of the map $\sigma$. The set
$\{\sigma_t(\w)\,|\;(t,\w)\in\mU\}$ is
the $\sigma$-{\em orbit\/} (or simply {\em orbit})
of the point $\w\in\W$. This orbit
is {\em globally defined\/} if $(t,\w)\in\mU$ for all $t\in\R$.
Restricting the time to $t\ge 0$ or
$t\le 0$ provides the definition of {\em forward\/}
or {\em backward $\sigma$-semiorbit}.
A Borel subset $\mC\subseteq \W$ is {\em $\sigma$-invariant\/}
if it is composed by globally defined orbits; i.e., if
$\sigma_t(\mC):=\{\sigma(t,\w)\,|\;\w\in\mC\}$
is defined and agrees with $\mC$ for every $t\in\R$.
A $\sigma$-invariant subset $\mM\subseteq\W$ is {\em $\sigma$-minimal\/}
(or simply {\em minimal}) if it is compact
and does not contain properly any other compact $\sigma$-invariant set;
or, equivalently, if each one of the two semiorbits of anyone of
its elements is dense in it. The flow $(\W,\sigma)$ is
{\em minimal\/} if $\W$ itself is minimal.
If the semiorbit $\{\sigma_t(\w_0)\,|\;t\ge 0\}$ is globally defined and
relatively compact, then the {\em omega limit set\/} of $\w_0$, which
we represent by $\mO_\sigma(\w_0)$, is given
by the points $\w\in\W$ such that $\w=\lim_{n\to \infty}\sigma_{t_n}(\w_0)$
for some sequence $(t_n)\uparrow\infty$.
This set is nonempty, compact, connected and $\sigma$-invariant.
By taking sequences $(t_m)\downarrow-\infty$ we obtain the definition of
the {\em alpha limit set\/} of $\w_0$, with analogous properties.
\par
Assume now that $\sigma$ is globally defined.
The flow is {\em equicontinuous\/} if given $\ep>0$
there exists $\delta>0$ such that
$\sup_{t\in\R}\dist_\W(\sigma_t(\w_1),\sigma_t(\w_2))<\ep$
whenever $\dist_\W(\w_1,\w_2)<\delta$. If $\W$ is a compact metric space,
equicontinuity is equivalent to {\em almost periodicity} (as proved in \cite{elli}).
A flow $(\W,\sigma)$ defined on a compact metric space $\W$
is {\em chain recurrent\/} if given $\varepsilon>0$, $t_0>0$, and
points $\w,\,\ww\in\W$, there exist points $\w_0:=\w,\,\w_1,\ldots,\,
\w_m:=\ww$ of
$\W$ and real numbers $t_1>t_0,\ldots,t_{m-1}>t_0$ such that
$\dist_\W(\sigma_{t_i}(\w_i),\w_{i+1})<\varepsilon$ for $i=0,\ldots,m-1$.
It is easy to check that minimality implies chain recurrence, and that
if $(\W,\sigma)$ is chain recurrent then $\W$ is connected.
\par
Let $m$ be a Borel measure on $\W$; i.e., a
regular measure defined on the Borel sets. (Any measure appearing in this paper
is of this type.) The measure is {\em concentrated on $\mB\subseteq\W$} if
$m(\W-\mB)=0$.
Its ({\em topological\/}) {\em support\/}, $\Supp m$, is the complement of
the biggest open set with null measure. In particular, it is contained in
any closed set $\mC$ on which the measure is concentrated; and if $\W$ is compact
then $\Supp m$ is compact. The measure $m$ is {\em $\sigma$-invariant\/}
if $m(\sigma_t(\mB))=m(\mB)$ for every Borel subset
$\mB\subseteq\W$ and every $t\in\R$. In this case, $\Supp m$ is
$\tau$-invariant;
and if $\W$ is minimal, then $\Supp m=\W$. Suppose that $m$ is
{\em finite\/} and {\em normalized}; i.e., that $m(\W)=1$. Then it is
{\em $\sigma$-ergodic\/} if it is $\sigma$-invariant and,
in addition, $m(\mB)=0$ or $m(\mB)=1$ for every $\sigma$-invariant subset
$\mB\subseteq\W$.
The sets of normalized $\sigma$-invariant and $\sigma$-ergodic measures are
represented by $\mminv(\W,\sigma)$ and $\mmerg(\W,\sigma)$.
If $\W$ is compact, there exists at least an element in $\mmerg(\W,\sigma)$.
Any equicontinuous minimal flow $(\W,\sigma)$
is {\em uniquely ergodic}, that is, $\mminv(\W,\sigma)$
reduces to just one element: a $\sigma$-ergodic measure.
\par
Let $\W$ be a compact metric space.
A Borel set $\mB\subseteq\W$ has {\em full measure for a measure
$m\in\mminv(\W,\sigma)$} if $m(\mB)=1$, and it has {\em complete measure\/} if
$m(\mB)=1$ for any $m\in\mminv(\W,\sigma)$.
A point $\w_0\in\W$ is {\em $\sigma$-generic\/} if $\lim_{t\to\infty}(1/t)\int_0^t
f(\sigma_s(\w_0))\,ds$ exists for every $f\in C(\W,\R)$.
In this case, Riesz representation theorem provides a mesure
$m_{\w_0}\in\mminv(\W,\sigma)$ such that $\lim_{t\to\infty}(1/t)\int_0^t
f(\sigma_s(\w_0))\,ds=\int_\W f(\w)\,dm_{\w_0}$ for every $f\in C(\W,\R)$.
In addition, the sets $\wit\W$ of $\sigma$-generic points and the subset
$\wit\W_e$ of those for which
$m_{\w_0}$ is $\sigma$-ergodic are $\sigma$-invariant and of complete
measure. And given a measure $m\in\mminv(\W,\R)$ and a real function
$f\in L^1(\W,m)$, there exists a set $\W_f\subseteq\wit\W_e$
with $m(\W_f)=1$ such that
$f\in L^1(\W,m_{\w_0})$ for every $\w_0\in\W_f$ and
$\int_\W f(\w)\,dm=\int_{\W_f}\big(\int_\W f(\w)dm_{\w_0}\big)\,dm$.
%This is the so-called ergodic decomposition of a $\sigma$-invariant
%measure.
\par
Throughout the paper, $\mB_\W(\w_0,\delta):=\{\w\in\W\,|\;\dist_{\W}(\w_0,\w)\le\delta\}$.
%%%%%%%%%%%%%%%%%%%%%%%%%%%%%%%%%%%%%%%%%%%%%%%%%%%%%%
%%%%%%%%%%%%%%%%%%%%%%%%%%%%%%%%%%%%%%%%%%%%%%%%%%%%%%
\subsection{Scalar skewproduct flows associated to families of ODEs}
\label{2.subsecskew}
Let $(\W,\sigma)$ be a global flow on a compact metric space,
and consider the one-dimensional trivial bundle $\W\times\R$, which is provided with
the structure of a complete metric space by the distance
$\dist_{\W\times\R}\big((\w_1,x_1),(\w_2,x_2)\big):=
\dist_\W(\w_1,\w_2)+|x_1-x_2|$. The sets
$\W$ and $\R$ are the {\em base} and the {\em fiber} of the bundle.
The sections of a subset $\mC\subseteq\W\times\R$,
over the base elements are represented as $\mC_\w:=
\{x\in\R\,|\;(\w,x)\in\mC\}$.
\par
From now on, and throughout the whole paper,
we will represent
\[
 \wt:=\sigma_t(\w)=\sigma(t,\w)\,.
\]
\par
Let us consider the scalar family of equations
\begin{equation}\label{2.ecescalar}
 x'=f(\wt,x)
\end{equation}
for $\w\in\W$, where $f\colon\R\times\W\to\R$ is
assumed to be jointly continuous and locally Lipschitz with respect to
the state variable $x$.
We will use the notation \eqref{2.ecescalar}$_\w$
to refer to the equation of the family corresponding to the point $\w$, and proceed
in an analogous way with the rest of families of equations appearing
in the paper.
\par
The family \eqref{2.ecescalar} allows us to define the map
\begin{equation}\label{2.deftau}
 \tau\colon\mU\subseteq\R\times\W\times\R\to\W\times\R\,,\quad
 (t,\w,x_0)\mapsto (\wt,x(t,\w,x_0))\,,
\end{equation}
where $\mI_{\w,x_0}\to\R\,,\;t\mapsto x(t,\w,x_0)$ is the maximal solution
of \eqref{2.ecescalar}$_\w$ with initial datum $x(0,\w,x_0)=x_0$, and $\mU:=
\bigcup_{(\w,x_0)\in\W\times\R}\big(\mI_{\w,x_0}\times\{(\w,x_0)\}\big)$, an open set.
The uniqueness of solutions ensures that $x(s+t,\w,x_0)=x(s,\wt,x(t,\w,x_0))$
whenever the right-hand term is defined, and this property ensures
that $\tau$ defines a (local or global) flow on $\W\times\R$. The
properties assumed on $f$ also ensure that $x(t,\w,x_0)$ varies continuously
with respect to $\w$ and $x_0$, and hence $\tau$ is continuous on its domain.
If, in addition, $f$ is assumed to be $C^1$ with respect to $x_0$, so is the map
$(t,\w,x_0)\mapsto x(t,\w,x_0)$, as long as it is defined.
The uniqueness of solutions also guarantees that
$\tau$ is {\em fiber-monotone}; that is, if $x_1<x_2$ then
$x(t,\w,x_1)<x(t,\w,x_2)$ for any $t$ in the common interval of definition
of both solutions.
\par
The flow $(\W\times\R,\tau)$ is a type of {\em skewproduct flow
on $\W\!\times\!\R$ projecting onto $(\W,\sigma)$}.
The flow $(\W,\sigma)$ is the {\em base flow} of $(\W\times\R,\tau)$.
In the linear homogeneous case $f(\w,x)=a(\w)\,x$, the flow
$\tau$ is globally defined and {\em linear\/}; that is, the map
$\R\to\R,\,x_0\mapsto x(t,\w,x_0)$ is defined and
linear for all $(t,\w)\in\R\times\W$.
\par
A measurable map $\alpha\colon\W\to\R$ is a {\em $\tau$-equilibrium}
if $\alpha(\wt)=x(t,\w,\alpha(\w))$ for all $t\in\R$ and $\w\in\W$;
a {\em $\tau$-subequilibrium}
if $\alpha(\wt)\le x(t,\w,\alpha(\w))$ for all $\w\in\W$ and $t\ge 0$;
and a {\em $\tau$-superequilibrium}
if $\alpha(\wt)\ge x(t,\w,\alpha(\w))$ for all $\w\in\W$ and $t\ge 0$.
%A $\tau$-subequilibrium or a $\tau$-subequilibrium is {\em strong\/} if the
%corresponding inequality is strict for any all $\w\in\W$ and $t>0$.
There is a strong connection among sub or superequilibria and
upper or lower solutions of the differential equations, which we will explain
when required.
A set $\mK\subset\W\times\Y$ is a {\em copy of the base for $\tau$}
if it is the graph of a continuous equilibrium $\alpha$, in which
case we write $\mK=\{\alpha\}$.
\par
We say that a $\tau$-invariant compact set $\mK\subset\W\times\R$ projecting over
the whole base is {\em uniformly stable at $+\infty$} ({\em on the fiber})
if for any $\kappa>0$ there exists some $\delta>0$
such that, if $(\w,\bar x_0)\in\mK$ and $(\w,x_0)\in\W\times\R$ satisfy
$|\bar x_0-x_0|<\delta$, then
$x(t,\w,x_0)$ is defined for all $t\ge 0$, and in addition
$|x(t,\w,\bar x_0)-x(t,\w,x_0)|\leq
\kappa$ for $t\ge 0$.
Changing $t\ge 0$ by $t\le 0$ provides the definition of {\em uniformly
stable at $-\infty$} $\tau$-invariant compact set.
\par
A $\tau$-invariant compact set $\mK\subset\W\times\R$ projecting over
the whole base is {\em uniformly exponentially stable at
$+\infty$} ({\em on the fiber}) if there exist $\delta>0$, $\kappa\ge 1$ and $\gamma>0$
such that, if $(\w,\bar x_0)\in\mK$ and $(\w,x_0)\in\W\times\R$ satisfy
$|\bar x_0-x_0|<\delta$,
then $x(t,\w,x_0)$ is defined for all $t\ge 0$, and in addition
$|x(t,\w,\bar x_0)-x(t,\w,x_0)|\leq
\kappa\,e^{-\gamma\,t}\,|\bar x_0-x_0|$ for $t\ge 0$.
Changing $t\ge 0$ by $t\le 0$ provides the definition of
{\em uniformly exponentially stable at $-\infty$}
$\tau$-invariant compact set.
\begin{nota}\label{2.enfibra}
We want to insist in the fact that our definitions of (exponential or not)
uniform stability for skew-product semiflows are not the classical ones for flows,
since we do not consider possible variation on the base points: we
just refer to variation on the fiber.
For further purposes we also point out that, if $(\W,\sigma)$ is a
equicontinuous continuous flow on a compact metric space,
then the whole space is uniformly stable at $\pm\infty$
in the classical sense.
\end{nota}
\par
The {\em Hausdorff semidistance\/}
from $\mC_1$ to $\mC_2$, where $\mC_1, \mC_2\subset\W\times\R$, is
\[
 \dist(\mC_1,\mC_2):=\sup_{(\w_1,x_1)\in\mC_1}\left(\inf_{(\w_2,x_2)\in\mC_2}
 \big(\dist_{\W\times\R}((\w_1,x_1),(\w_2,x_2))\big)\right).
\]
A set $\mB\subset\W\!\times\!\Y$ is said {\em to attract a set
$\mC\subseteq\W$ under $\tau$} if
$\tau_t(\mC)$ is defined for all $t\ge 0$ and,
in addition, $\lim_{t\to\infty}\dist(\tau_t(\mC),\mB)=0$. The flow
$\tau$ is {\em bounded
dissipative\/} if there exists a bounded set $\mB$ attracting all
the bounded subsets of $\W\times\R$ under $\tau$.
And a set $\mA\subset\W\times\R$
is a {\em global attractor\/} for $\tau$ if it is compact, $\tau$-invariant, and it
attracts every bounded subset of $\W\times\R$ under $\tau$.
%Finally, a set $\mB$ is {\em absorbing under $\Pi$} if, for any
%bounded set $\mB_0$, there exists $t_0=t_0(\mB_0,\mB)$ such that
%$\Pi(t,\mB_0)\subseteq\mB$ for all $t\ge t_0$.
\par
Finally, a Borel measure $\nu$ on $\W\times\R$ {\em projects on\/}
a measure $m$ on $\W$, given by $m(\mB)=\nu(\mB\times\R)$ for any Borel
subset $\mB\subseteq\W$; and it is easy to check that $m$ is $\sigma$-invariant
if $\nu$ is $\tau$-invariant.
%%%%%%%%%%%%%%%%%%%%%%%%%%%%%%%%%%%%%%%%%%%%%%%%%%%%%%%%%%%%%%%%%%%%
%%%%%%%%%%%%%%%%%%%%%%%%%%%%%%%%%%%%%%%%%%%%%%%%%%%%%%%%%%%%%%%%%%%%
\subsection{Sacker and Sell spectrum of a family of linear scalar equations}
\label{2.subseclin}
Let $(\W,\sigma)$ be a minimal flow on a compact metric space, and
let us consider the family of linear differential equations
\begin{equation}\label{2.eclinear}
 x'=a(\wt)\,x
\end{equation}
for $\w\in\W$, where $a\colon\W\to\R$ is continuous.
\begin{defi}\label{2.defEDsubfibrados}
The family \eqref{2.eclinear} has {\em exponential dichotomy
over $\W$\/} if there exist $\kappa\ge 1$ and $\gamma>0$ such that either
\begin{equation}\label{2.masi}
 \exp\int_0^t a(\w{\cdot}l)\,dl\le \kappa\,e^{-\gamma t} \quad
 \text{whenever $\w\in\W$ and $t\ge 0$}
\end{equation}
or
\begin{equation}\label{2.menosi}
 \exp\int_0^t a(\w{\cdot}l)\,dl\le\kappa\,e^{\gamma t} \quad
 \text{whenever $\w\in\W$ and $t\le 0$}\,.
\end{equation}
\end{defi}
\begin{notas} \label{2.notaED}
1.~Since the base flow $(\W,\sigma)$ is minimal, the exponential
dichotomy of the family \eqref{2.eclinear} over $\W$ is equivalent
to the exponential dichotomy of any of its equations over $\R$:
see e.g.~Theorem 2 and Section 3 of \cite{sase2}.
\par
2.~The family \eqref{2.eclinear} has
exponential dichotomy over $\W$ if and only if no one of its equations
has a nontrivial bounded solution: see e.g.~Theorem 1.61 of \cite{jonnf}.
In other words, the property fails if and only if there exists $\ww\in\W$
such that $\sup_{t\in\R}\exp\big(\int_0^ta(\ww{\cdot}s)\,ds\big)<\infty$.
\end{notas}
\begin{defi}\label{2.defSSS}
The {\em Sacker and Sell spectrum\/} or {\em dynamical spectrum\/}
of the linear family \eqref{2.eclinear}
is the set $\Sigma_a$ of $\lambda\in\R$ such that the family
$x'=(a(\wt)-\lambda)\,x$
does not have exponential dichotomy over $\W$.
\end{defi}
Note that, in the autonomous case $a(\w)\equiv a\in\R$, the
set $\Sigma_a$ is given by $\{a\}$.
\begin{defi}\label{2.defiexp}
The {\em lower Lyapunov exponent of the family\/} \eqref{2.eclinear} {\em for $(\W,\sigma)$}
is
\[
 \gamma^i_\W:=\inf\left\{\int_\W a(\w)\,dm\,|\;m\in\mminv(\W,\sigma)\right\}\,,
\]
and the {\em upper Lyapunov exponent of the family\/} \eqref{2.eclinear} {\em for $(\W,\sigma)$} is
\[
 \gamma^s_\W:=\sup\left\{\int_\W a(\w)\,dm\,|\;m\in\mminv(\W,\sigma)\right\}\,.
\]
\end{defi}
For the reader's convenience, we include a proof of the next well known result.
\begin{teor}\label{2.teorespectro}
\begin{itemize}
\item[\rm(i)] There exist $m^i,m^s\in\mmerg(\W,\sigma)$ such that
\[
 \gamma^i_\W:=\int_\W a(\w)\,dm^i\quad\text{and}\quad\gamma^s_\W:=\int_\W a(\w)\,dm^s\,.
\]
\item[\rm(ii)] The Sacker and Sell spectrum of the linear family \eqref{2.eclinear}
is $\Sigma_a=[\,\gamma^i_\W\,,\,\gamma^s_\W\,]$, and it may be a singleton.
\end{itemize}
\end{teor}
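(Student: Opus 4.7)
For (i), I would exploit weak-$*$ compactness. Since $\W$ is compact, $\mminv(\W,\sigma)$ is a nonempty, convex, weak-$*$ compact subset of the dual of $C(\W,\R)$, and the functional $m\mapsto\int_\W a\,dm$ is weak-$*$ continuous because $a$ is continuous. Hence it attains its infimum $\gamma^i_\W$ and supremum $\gamma^s_\W$ on $\mminv(\W,\sigma)$. To sharpen to an ergodic maximizer, I would invoke the ergodic decomposition theorem: a maximizer $m\in\mminv(\W,\sigma)$ decomposes as $m=\int \mu\,dp(\mu)$ with $p$ a probability supported on $\mmerg(\W,\sigma)$, so $\gamma^s_\W=\int\!\big(\int_\W a\,d\mu\big)\,dp(\mu)$ is an average of values bounded by $\gamma^s_\W$; it follows that $\int_\W a\,d\mu=\gamma^s_\W$ for $p$-almost every $\mu$, and any such $\mu$ serves as $m^s$. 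An identical argument produces $m^i\in\mmerg(\W,\sigma)$.

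For (ii), I first address $\Sigma_a\subseteq[\gamma^i_\W,\gamma^s_\W]$. Fix $\lambda>\gamma^s_\W$ and set $\ep:=(\lambda-\gamma^s_\W)/2>0$. The central step is a uniform time-average estimate: there exists $T_0>0$ such that
\[
 \frac{1}{t}\int_0^t\big(a(\w\cdot s)-\lambda\big)\,ds\le-\ep \qquad\text{for every }\w\in\W\text{ and }t\ge T_0.
\]
If this failed, there would be $(\w_n,t_n)$ with $t_n\to\infty$ violating it; the empirical measures $\mu_n:=t_n^{-1}\int_0^{t_n}\delta_{\w_n\cdot s}\,ds$ then admit, by the standard Krylov--Bogolyubov argument, a weak-$*$ cluster point $\mu\in\mminv(\W,\sigma)$ with $\int_\W a\,d\mu\ge\gamma^s_\W+\ep$, contradicting the definition of $\gamma^s_\W$. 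Combining this bound with continuity of $a$ (which controls the integral on $[0,T_0]$) yields constants $\kappa\ge 1$ and $\gamma:=\ep$ for which \eqref{2.masi} holds with $a-\lambda$ in place of $a$, so $\lambda\notin\Sigma_a$. Running the symmetric argument backward in time shows that $\lambda<\gamma^i_\W$ forces \eqref{2.menosi}.

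For the reverse inclusion $[\gamma^i_\W,\gamma^s_\W]\subseteq\Sigma_a$, suppose some $\lambda\in[\gamma^i_\W,\gamma^s_\W]$ admits exponential dichotomy for $a-\lambda$. If \eqref{2.masi} holds with constants $\kappa,\gamma$, then
\[
 \frac{1}{t}\int_0^t\big(a(\w\cdot s)-\lambda\big)\,ds\le\frac{\log\kappa}{t}-\gamma\qquad(\w\in\W,\ t>0).
\]
Applying Birkhoff's ergodic theorem to the ergodic measure $m^s$ from (i) shows that for $m^s$-almost every $\w$ the left-hand side converges to $\gamma^s_\W-\lambda\ge 0$, contradicting the asymptotic upper bound $-\gamma$. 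If instead \eqref{2.menosi} holds, a parallel estimate for $t<0$ combined with Birkhoff applied to $m^i$ forces $\gamma^i_\W-\lambda\ge\gamma>0$, contradicting $\lambda\ge\gamma^i_\W$. Hence $\lambda\in\Sigma_a$. Finally, the singleton case is realized whenever $\gamma^i_\W=\gamma^s_\W$, for instance when $(\W,\sigma)$ is uniquely ergodic or $a$ is constant.

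The main obstacle I anticipate is the uniform estimate in the second paragraph. It is tempting to derive it directly from Birkhoff, but Birkhoff only delivers pointwise almost-sure convergence and not the uniformity in $\w$ required by exponential dichotomy; the correct tool is the compactness/Krylov--Bogolyubov argument above, and it must be carefully adapted to cover the backward-time estimate needed for $\lambda<\gamma^i_\W$.
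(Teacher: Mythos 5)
Your proof is correct, but it follows a genuinely different route from the paper's. The paper disposes of the whole theorem in three lines by citation: the Sacker--Sell spectral theorem (\cite{sase6}) gives that $\Sigma_a$ is a closed, possibly degenerate interval $[\lambda_i,\lambda_s]$ in the scalar case, and Theorem~2.3 of \cite{jops} gives both that this interval contains $\int_\W a\,dm$ for every $m\in\mminv(\W,\sigma)$ and that the endpoints are attained by ergodic measures. What you have written is, in effect, a self-contained reproof of the scalar case of those two results: weak-$*$ compactness plus ergodic decomposition (equivalently, the fact that the ergodic measures are the extreme points of $\mminv(\W,\sigma)$) for part (i); and for part (ii), the Krylov--Bogolyubov empirical-measure argument to upgrade the condition $\sup_m\int_\W a\,dm<\lambda$ to the \emph{uniform} exponential estimate \eqref{2.masi} required by Definition~\ref{2.defEDsubfibrados}, together with Birkhoff's theorem applied to $m^s$ and $m^i$ to exclude either dichotomy alternative for $\lambda$ inside the interval. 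All the steps check out, including the backward-time versions (the time-reversed flow preserves the same ergodic measures, so the Birkhoff averages as $t\to-\infty$ converge to the same space averages), and you correctly identify the one genuine pitfall --- that Birkhoff alone cannot deliver the uniformity in $\w$ needed for exponential dichotomy. The trade-off is the usual one: your argument is longer but elementary and self-contained, whereas the paper's is essentially a pointer to the literature; your version also has the minor advantage of making transparent exactly where minimality is \emph{not} needed (nowhere in this theorem).
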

\begin{proof}
The Sacker and Sell spectral theorem \cite[Theorem 2]{sase6}
states that, in this scalar case,
$\Sigma_a$ is given by a closed (perhaps degenerate) interval,
say $[\lambda_i,\lambda_s]$. Theorem 2.3 of \cite{jops} shows that this
interval contains $\int_\W a(\w)\,dm$ for all $m\in\mminv(\W,\sigma)$,
as well as the existence of $m^i,m^s\in\mmerg(\W,\sigma)$ such that
$\lb_i:=\int_\W a(\w)\,dm^i$ and $\lb_s=\int_\W a(\w)\,dm^s$. These
properties show the assertions.
\end{proof}
\begin{nota}\label{2.notaDE}
It is clear $0\in\Sigma_a$ if and only if the family \eqref{2.eclinear} does not
have exponential dichotomy over $\W$. In addition,
Theorem~\ref{2.teorespectro} ensures that:
\begin{itemize}
\item[-] $\Sigma_a\subset(-\infty,0)$ if and only if the upper Lyapunov exponent
of the family \eqref{2.eclinear} is negative; or, equivalently, if and only if
$\int_\W a(\w)\,dm<0$ for any $m\in\mminv(\W,\sigma)$.
\item[-] $\Sigma_a\subset(0,\infty)$ if and only if the lower Lyapunov exponent
of the family \eqref{2.eclinear} is positive; or, equivalently, if and only if
$\int_\W a(\w)\,dm>0$ for any $m\in\mminv(\W,\sigma)$.
\end{itemize}
\end{nota}
%%%%%%%%%%%%%%%%%%%%%%%%%%%%%%%%%%%%%%%%%%%%%%%%%%%
%%%%%%%%%%%%%%%%%%%%%%%%%%%%%%%%%%%%%%%%%%%%%%%%%%%
\subsection{The minimal subsets of a scalar skewproduct
flow induced by a family of scalar ODEs over a minimal base}\label{2.subsecescalar}
As in the previous section, $(\W,\sigma)$ is a minimal continuous flow on
a compact metric space, and this assumption on minimality is fundamental.
We will recall in this subsection some
properties of the minimal sets for the scalar skewproduct
flow $(\W\times\R,\tau)$ given by the expression \eqref{2.deftau}; that is,
given by the solutions of the family \eqref{2.ecescalar} over $\W$.
We will also define some types of sets which will be fundamental in the
dynamical description of Section \ref{3.sec}.
\par
It is very easy to deduce from the minimality of the base flow
that any copy of the base is $\tau$-minimal, and that
any compact $\tau$-invariant set $\mK\subset\W\times\R$ projects over the
whole base $\W$. If, for such a set $\mK$, there exists a point $\w\in\W$ such that
$\mK_\w$ is a singleton, then $\mK$ is a {\em pinched set}. A minimal
pinched set is an {\em almost automorphic extension
of the base}. It turns out that, for our scalar skewproduct flow,
any minimal set $\mM$ is an almost automorphic extension
of the base. To briefly explain this fact, we observe that
\begin{equation}\label{2.M}
 \mM\subseteq\bigcup_{\w\in\W}\big(\{\w\}\times[\alfam(\w),\betam(\w)]\big)
\end{equation}
where $\alfam(\w):=\inf\{x\in\R\,|\;(\w,x)\in\mM\}$ and
$\betam(\w):=\sup\{x\in\R\,|\;(\w,x)\in\mM\}$.
It is not hard to deduce from the compactness of $\mM$ that $\alfam$ and
$\betam$ are lower and upper semicontinuous; from its $\tau$-invariance
that they are $\tau$-equilibria; and from its minimality
that $\mM=\text{closure}_{\W\times\R}\{(\wt,\alfam(\wt))\,|\;t\in\R\}$
(resp.~$\mM=\text{closure}_{\W\times\R}\{(\wt,\betam(\wt))\,|\;t\in\R\}$)
for any $\w\in\W$, and hence that $\mM_\w=\{\alfam(\w)\}$
(resp.~$\mM_\w=\{\betam(\w)\}$) at any point $\w$ at which $\alfam$
(resp.~$\betam$) is continuous.
Therefore, $\alfam$ and $\betam$ have the same $(\sigma$-invariant and residual)
set $\W_\mM\subseteq\W$ of continuity points, which are exactly
the points at which both maps coincide; and
$\mM_\w$ reduces to a singleton if and only if $\w\in\W_\mM$.
The functions $\alfam$ and $\betam$ are hence continuous
if and only if $\alfam(\w)=\betam(\w)$ for all $\w\in\W$. In other words,
if and only if $\mM$ is a copy of the base: $\mM=\{\alfam\}=\{\betam\}$.
\par
Two different $\tau$-minimal sets $\mM$ and $\mN$ are {\em fiber-ordered},
in the following sense: if there exists $(\w_0,x_0)\in\mM$ and $(\w_0,y_0)\in\mN$
such that $x_0<y_0$, then $x<y$ whenever $(\w,x)\in\mM$ and $(\w,y)\in\mN$.
To prove this fact, we take a common element $\bw\in\W$ such that
$\mM_{\bw}=\{\bar x\}$ and $\mN_{\bw}=\{\bar y\}$ and assume without
restriction that $\bar x<\bar y$. Let us reason by contradiction assuming
the existence of $(\w,x)\in\mM$ and $(\w,y)\in\mN$ with $x>y$.
We look for $(t_n)$ such that $(\bw,\bar x)=\lim_{n\to\infty}\tau(t_n,\w,x)$
and a suitable subsequence $(t_k)$ such that there exists
$\lim_{k\to\infty}\tau(t_k,\w,y)$. Then, this limit is necessarily
$(\bw,\bar y)$, and the fiber-monotonicity of $\tau$ ensures that
$\bar x\ge \bar y$, which is the sought-for contradiction.
\par
Assume now that $f$ is $C^1$ with respect to its second argument.
Given a $\tau$-minimal set $\mM$, we can consider the {\em linearized flow} on
$\mM\times\R$, given by the solutions of the family of variational equations
\begin{equation}\label{2.ecvari}
 z'=f_x(\tau(t,\w,x_0))\,z
\end{equation}
for $(\w,x_0)\in\mM$, where $f_x:=\partial f/\partial x$.
A $\tau$-minimal set $\mM\subset\W\times\R$ is {\em hyperbolic} if
the family \eqref{2.ecvari} has exponential dichotomy over $\mM$.
This last definition is justified by the next result.
For the reader's convenience, we give a proof
of this well-known fact, concerning
hyperbolic minimal sets, which will be crucial in Section \ref{3.sec}. The functions
$\alfam$ and $\betam$ are those associated to $\mM$ by \eqref{2.M}.
The uniform exponential stability properties are defined in Subsection \ref{2.subsecskew}.
\begin{prop} \label{2.propcopia}
Assume that the functions $f,f_x\colon\W\times\R\to\R$ are jointly continuous,
let $(\W\times\R,\tau)$ be the flow induced by the family \eqref{2.ecescalar},
and let $\mM\subset\W\times\R$ be a $\tau$-minimal set.
Then,
\begin{itemize}
\item[\rm(i)] the family \eqref{2.ecvari} has exponential dichotomy over $\mM$ given by
condition \eqref{2.masi} if and only if $\mM$ is a uniformly exponentially stable at $+\infty$
copy of the base: $\mM=\{\alfam\}=\{\betam\}$. In addition, in this case, given $(\w,x_0)\notin\mM$,
there exists $\rho>0$ and $t_-<0$ such that $|x(t,\w,x_0)-\alfam(\wt)|>\rho$ for
$t\le t_-$ in the maximal interval of definition of $x(t,\w,x_0)$.
\item[\rm(ii)] The family \eqref{2.ecvari} has exponential dichotomy over $\mM$ given by
condition \eqref{2.menosi} if and only if $\mM$ is a uniformly exponentially stable at $-\infty$
copy of the base: $\mM=\{\alfam\}=\{\betam\}$. In addition, in this case, given $(\w,x_0)\notin\mM$,
there exists $\rho>0$ and $t_+>0$ such that $|x(t,\w,x_0)-\alfam(\wt)|>\rho$ for
$t\ge t_+$ in the maximal interval of definition of $x(t,\w,x_0)$.
\end{itemize}
\end{prop}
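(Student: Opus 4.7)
The plan is to prove (i) in full; (ii) follows by applying (i) to the time-reversed minimal flow $\tilde\sigma(t,\w):=\sigma(-t,\w)$ paired with the family $x'=-f(\w,x)$, which flips the sign of the linearization and exchanges conditions \eqref{2.masi} and \eqref{2.menosi} as well as stability at $+\infty$ with stability at $-\infty$. The central tool for (i) is the standard stable-manifold estimate derived from EED: from EED \eqref{2.masi} of \eqref{2.ecvari} over $\mM$, variation of constants applied to $u(t):=x(t,\w,x_0)-x(t,\w,\bar x_0)$ (whose equation has linear part $f_x(\wt,x(t,\w,\bar x_0))\,u$ satisfying EED, and an error $o(|u|)$ coming from the $C^1$ regularity of $f$), together with a Gronwall absorption, yields constants $\delta_0>0$, $\bar\kappa\ge 1$ and $\bar\gamma>0$ such that, whenever $(\w,\bar x_0)\in\mM$ and $|x_0-\bar x_0|<\delta_0$, the solution $x(t,\w,x_0)$ is defined for all $t\ge 0$ and
\[
 |x(t,\w,x_0)-x(t,\w,\bar x_0)|\le \bar\kappa\,e^{-\bar\gamma t}\,|x_0-\bar x_0|\,,\qquad t\ge 0\,.
\]
I call this estimate $(\star)$.

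For the forward implication, first show $\mM=\{\alfam\}=\{\betam\}$. Suppose, for contradiction, $\alfam(\w)<\betam(\w)$ for some $\w\in\W$, and set $d(t):=\betam(\wt)-\alfam(\wt)$, positive for all $t\in\R$ by fiber-monotonicity and the $\tau$-invariance of $\alfam,\betam$. Pick $\w^*\in\W_\mM$ (nonempty, being residual in $\W$); by minimality of $(\W,\sigma)$ there exist $t_n\to-\infty$ with $\wt_n\to\w^*$, and the lower (resp.\ upper) semicontinuity of $\alfam$ (resp.\ $\betam$) together with $\alfam(\w^*)=\betam(\w^*)$ forces $d(t_n)\to 0$. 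For $n$ large, $d(t_n)<\delta_0$; applying $(\star)$ at the base point $(\wt_n,\alfam(\wt_n))\in\mM$ with initial state $\betam(\wt_n)$ over the positive time $-t_n$, and using the group property of the flow (which identifies $x(-t_n,\wt_n,\betam(\wt_n))$ with $\betam(\w)$, and analogously for $\alfam$):
\[
 d(0)=|\betam(\w)-\alfam(\w)|\le\bar\kappa\,e^{\bar\gamma t_n}\,d(t_n)\,,
\]
whose right-hand side tends to $0$, contradicting $d(0)>0$. Hence $\mM=\{\alfam\}$, and specialising $(\star)$ to $\bar x_0=\alfam(\w)$ is precisely uniform exponential stability at $+\infty$. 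The \lq\lq in addition''~statement is proved by the same pull-back argument: if $(\w,x_0)\notin\mM$ and no $(\rho,t_-)$ as claimed existed, there would be $s_n\to-\infty$ in the maximal interval of $x(\cdot,\w,x_0)$ with $|x(s_n,\w,x_0)-\alfam(\ws_n)|\to 0$, and applying $(\star)$ through time $-s_n$ would give $|x_0-\alfam(\w)|\le\bar\kappa\,e^{\bar\gamma s_n}|x(s_n,\w,x_0)-\alfam(\ws_n)|\to 0$, a contradiction.

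For the backward implication, assume $\mM=\{\alfam\}$ is uniformly exponentially stable at $+\infty$ with constants $\delta,\kappa,\gamma$. Fix $\w\in\W$ and $t\ge 0$; for $0<x_0-\alfam(\w)<\delta$, put $u(s):=x(s,\w,x_0)-\alfam(\ws)>0$. Integrating $f$ fiberwise between $\alfam(\ws)$ and $\alfam(\ws)+u(s)$ gives $u'(s)=A(s,u(s))\,u(s)$ with $A(s,y):=\int_0^1 f_x(\ws,\alfam(\ws)+\theta y)\,d\theta$, whence
\[
 \frac{u(t)}{u(0)}=\exp\!\left(\int_0^t A(s,u(s))\,ds\right)\,.
\]
Uniform exponential stability gives $u(t)/u(0)\le\kappa\,e^{-\gamma t}$, while $C^1$ dependence of $x(t,\w,x_0)$ on $x_0$ yields $u(t)/u(0)\to\exp(\int_0^t f_x(\ws,\alfam(\ws))\,ds)$ as $u(0)\to 0^+$ (the limit being the value at time $t$ of the solution of \eqref{2.ecvari} at $(\w,\alfam(\w))$ starting from $1$). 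Passing to the limit gives $\exp(\int_0^t f_x(\ws,\alfam(\ws))\,ds)\le\kappa\,e^{-\gamma t}$ for every $\w\in\W$ and $t\ge 0$, which is precisely \eqref{2.masi}.

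The main obstacle is the very first step---deducing $\alfam=\betam$ from EED on $\mM$. It requires intertwining the almost automorphic structure of scalar minimal sets (so that some backward iterates make $d(t_n)$ small), the semicontinuity and equilibrium properties of $\alfam,\betam$, and the backward expansion built into $(\star)$ via the flow's group property; the geometric factor $e^{\bar\gamma t_n}$ from the pull-back, combined with the boundedness of $d(t_n)$ from compactness of $\mM$, must produce the required contradiction.
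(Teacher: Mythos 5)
Your proposal is correct and follows essentially the same route as the paper's proof: your estimate $(\star)$ is the paper's bound \eqref{2.fa} (obtained there from Hale's First Approximation Theorem, with the same uniform choice of $\delta$ over $\mM$), your pull-back through a fiber that collapses (you use semicontinuity of $\alfam,\betam$ at a continuity point, the paper pulls two points back toward a singleton fiber $\mM_{\ww}=\{\wit x\}$ -- the same mechanism) yields the copy-of-the-base conclusion and the ``in addition'' statement exactly as in the paper, and your converse via the limit of the difference quotient $u(t)/u(0)$ is the paper's bound on $|(\partial x/\partial x_0)(t,\w,x_0)|$ in integral-mean form. The only cosmetic difference is that you formalize (ii) by an explicit time-reversal conjugation (which is valid: it exchanges \eqref{2.masi} with \eqref{2.menosi} and stability at $+\infty$ with stability at $-\infty$), where the paper simply declares the proofs analogous.
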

\begin{proof}
(i) Let us fix $(\w_1,x_1)\in\mM$, and assume that the family \eqref{2.ecvari} satisfies the
corresponding condition \eqref{2.masi}. The hypotheses on $f$ ensure that
$f(\w_1,x)-f(\w_1,x_1)=f_x(\w_1,x_1){\cdot}(x-x_1)+r(\w,x)$, with
$\lim_{x\to x_1}|r(\w,x)|/|x-x_1|=0$. Therefore, the change of variables $y=x-x(t,\w_1,x_1)$ takes
the equation \eqref{2.ecescalar}$_{\w_1}$ to
\begin{equation}\label{2.ectran}
 y'=f_x(\tau(t,\w_1,x_1))\,y+\wit r(\w_1{\cdot}t,y)\,,
\end{equation}
with $\lim_{x\to 0}|\wit r(\w_1,y)|/|y|=0$. Let $y(t,\w_1,y_0)$ represent the solution
of \eqref{2.ectran} with $y(0,\w_1,y_0)=y_0$, so that
$y(t,\w_1,y_0)=x(t,\w_1,y_0+x_1)-x(t,\w_1,x_1)$. Then, condition \eqref{2.masi} and
the First Approximation Theorem
(see \cite[Theorem~III.2.4]{hale} and its proof) provide $\delta>0$ such that
\begin{equation}\label{2.fa}
 |y(t,\w_1,y_0)|\le\kappa\,e^{(-\gamma/2)\,t}|y_0| \qquad
 \text{for any $\;t\ge 0\;$ if $\;|y_0|\le\delta$}\,.
\end{equation}
In addition, the constant $\delta$ can be chosen to satisfy \eqref{2.fa} for any
$\w_1\in\W$.
\par
Now we take any point $(\w_1,x_2)\in\mM$, and will check that $x_2=x_1$.
We can choose $\ww$ with $\mM_{\ww}=\{\wit x\}$. Then,
$\lim_{n\to\infty}(\w_1{\cdot}(-t_n),u(-t_n,\w_1,x_1))=
(\ww,\wit x)$ for a sequence $(t_n)\uparrow\infty$. We take a subsequence $(t_k)$
such that $\lim_{k\to\infty}(\w_1{\cdot}(-t_k),x(-t_k,\w_1,x_2))$ exists,
and observe that this limit must be $(\ww,\wit x)$, since it belongs to $\mM$.
Hence, $\lim_{k\to\infty}y(-t_k,\w_1,x_2-x_1)=
\lim_{k\to\infty}(x(-t_k,\w_1,x_2)-x(-t_k,\w_1,x_1))=\wit x-\wit x=0$.
For $k$ large enough to ensure that
$|y(-t_k,\w_1,x_2-x_1)|\le\delta$, \eqref{2.fa} yields
\[
 |x_2-x_1|=|y(t_k,\w_1{\cdot}(-t_k),y(-t_k,\w_1,x_2-x_1))|
 \le\kappa\,e^{(-\gamma/2)\,t_k}\,\delta\,.
\]
Taking limit as $k\to\infty$ allows us to ensure that $x_2=x_1$, as asserted.
\par
Let us write $\mM=\{\eta\}$ for a continuous function $\eta\colon\W\to\R$.
The continuous flow transformation $(\w,x)\mapsto(\w,x-\eta(\w))$ takes $\mM$ to the set
$\W\times\{0\}$, which is a copy of the base for the flow induced by the family of equations
$y'=f_x(\wt,\eta(\wt))\,y+\wit r(\wt,y)$ for $\w\in\W$.
It follows from \eqref{2.fa} that $\W\times\{0\}$ is uniformly
exponentially stable, which ensures the analogous property for $\mM$ and the initial flow $\tau$.
The \lq\lq only if"~part of the first assertion of (i) is proved.
\par
Conversely, let us assume that $\mM$ is an exponentially stable copy at $+\infty$ copy of the
base. Then, for all $(\w,x)\in\mM$,
$|(\partial x/\partial x_0)(t,\w,x_0)|=
\lim_{h\to 0}|x(t,\w,x_0+h)-x(t,\w,x_0)|/|h|\le \kappa \,e^{-\gamma t}$ for
certain constants $\kappa\ge 1$ and $\gamma>0$, and for all $t\ge 0$.
This implies that the family of equations \eqref{2.ecvari}, defined for $(\w,x_0)\in\mM$,
satisfies condition \eqref{2.masi}, and completes the proof of the equivalence stated in (i).
\par
Assume now that we are in the described situation, and let $\delta,\,\kappa$ and $\gamma$
be the constants associated to the uniformly exponentially character at $+\infty$ of
$\mM$. To prove the
last assertion in (i) we take $x_0\ne\alfam(\w)$ and assume for contradiction
the existence of $(t_n)\downarrow-\infty$
such that $\lim_{n \to\infty}|x(t_n,\w,x_0)-\alfam(\wt_n)|=0$. Thus, for large enough $n$,
$|x(t_n,\w,x_0)-\alfam(\wt_n)|\le\delta$.
But then $|x_0-\alfam(\w)|=|x(-t_n,\wt_n,x(t_n,\w,x_0))-\alfam((\wt_n){\cdot}(-t_n))|
\le\kappa\,e^{\gamma\,t_n}\delta$. The contradiction comes from the convergence to $0$
of the right-hand term. The proof of (i) is complete.
\smallskip\par
(ii) The proofs are analogous if the exponential dichotomy is given by \eqref{2.menosi}
or the uniform exponential stability occurs at $-\infty$.
\end{proof}
\begin{defi}
The {\em upper} and {\em lower Lyapunov exponents\/} of a $\tau$-minimal set
$\mM\subset\W\times\R$ are the upper and lower Lyapunov exponents
of the family of variational equations \eqref{2.ecvari} over $\mM$.
\end{defi}
As a consequence of this definition, Remark~\ref{2.notaED},
Theorem \ref{2.teorespectro}(ii), and Proposition \ref{2.propcopia}, we have:
\begin{coro}\label{2.coroDE}
Assume that the functions $f,f_x\colon\W\times\R\to\R$ are jointly continuous,
and let $(\W\times\R,\tau)$ be the flow induced by the family \eqref{2.ecescalar}.
If the upper Lyapunov exponent of the $\tau$-minimal set $\mM$
is negative, then $\mM$ is an exponentially
stable at $+\infty$ copy of the base. If its lower Lyapunov
exponent is positive, then $\mM$ is an exponentially
stable at $-\infty$ copy of the base. And, in both cases,
$\mM=\{\alfam\}=\{\betam\}$.
\end{coro}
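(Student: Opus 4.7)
The plan is to combine the three preceding results of the subsection in a fairly direct way. First I would observe that, since $\mM$ is $\tau$-minimal, the restriction $\tau|_\mM$ makes $(\mM,\tau|_\mM)$ a minimal flow on a compact metric space, and the family of variational equations \eqref{2.ecvari} is a linear scalar family indexed over this minimal base, with continuous coefficient $(\w,x_0)\mapsto f_x(\w,x_0)$. Hence Theorem \ref{2.teorespectro}, Remark \ref{2.notaDE}, and Proposition \ref{2.propcopia} all apply to \eqref{2.ecvari} restricted to $\mM$.

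Next I would handle the first assertion. Assume the upper Lyapunov exponent of $\mM$ is negative; by definition this means $\gamma^s_\mM<0$ in the sense of Definition \ref{2.defiexp} applied to the family \eqref{2.ecvari} over $\mM$. By Remark \ref{2.notaDE} the Sacker and Sell spectrum of this family is contained in $(-\infty,0)$, so in particular $0\notin\Sigma$ and the family has exponential dichotomy over $\mM$. To conclude, I must show that the dichotomy is of the type \eqref{2.masi} rather than \eqref{2.menosi}. This is the one step that needs a small argument: since the variational equation along an orbit $\tau(\cdot,\w,x_0)$ is scalar and linear, its fundamental solution is $\exp\int_0^t f_x(\tau(l,\w,x_0))\,dl$; choosing any $m\in\mmerg(\mM,\tau|_\mM)$ (which exists by compactness) and applying Birkhoff's ergodic theorem, the exponent $(1/t)\int_0^t f_x(\tau(l,\w,x_0))\,dl$ converges $m$-a.e.\ to $\int_\mM f_x\,dm\le\gamma^s_\mM<0$, so the integrand $\int_0^t f_x(\tau(l,\w,x_0))\,dl\to-\infty$ as $t\to+\infty$ on a set of positive measure. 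This rules out \eqref{2.menosi}, leaving \eqref{2.masi}. Then Proposition \ref{2.propcopia}(i) yields that $\mM$ is a uniformly exponentially stable at $+\infty$ copy of the base; in particular $\alfam$ and $\betam$ coincide with the continuous equilibrium whose graph is $\mM$, giving $\mM=\{\alfam\}=\{\betam\}$.

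The second assertion is completely symmetric: if the lower Lyapunov exponent of $\mM$ is positive, then by Remark \ref{2.notaDE} the Sacker and Sell spectrum of \eqref{2.ecvari} over $\mM$ is contained in $(0,\infty)$, so exponential dichotomy again holds, and the same Birkhoff argument applied with any ergodic $m$ gives $\int_0^t f_x(\tau(l,\w,x_0))\,dl\to+\infty$ as $t\to+\infty$, which selects the dichotomy \eqref{2.menosi}. Proposition \ref{2.propcopia}(ii) then supplies that $\mM$ is a uniformly exponentially stable at $-\infty$ copy of the base, and once more $\mM=\{\alfam\}=\{\betam\}$.

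The only part of the argument that is not pure quotation is the selection between \eqref{2.masi} and \eqref{2.menosi}, which is the step I expect to be the main (minor) obstacle; I resolved it above via Birkhoff, but one could equally argue directly from the explicit scalar fundamental solution, noting that negativity of the integrated exponent on average is incompatible with uniform exponential growth in forward time. Everything else is a direct appeal to the previously established results.
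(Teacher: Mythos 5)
Your proposal is correct and follows essentially the same route as the paper, which states this corollary as an immediate consequence of Remark \ref{2.notaDE}, Theorem \ref{2.teorespectro}(ii), and Proposition \ref{2.propcopia} without writing out a proof. The only detail you add beyond what the paper records is the Birkhoff argument selecting \eqref{2.masi} over \eqref{2.menosi} (and vice versa), which is a valid way to fill in the step the paper leaves implicit.
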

%%%%%%%%%%%%%%%%%%%%%%%%%%%%%%%%%%%%%%%%%%%%%%%%%%%%%%%%%%%
%%%%%%%%%%%%%%%%%%%%%%%%%%%%%%%%%%%%%%%%%%%%%%%%%%%%%%%%%%%
\subsection{The set $\mR_m(\W)$}\label{2.subsecR}
We continue this section of preliminaries
by describing a set of continuous maps $a\colon\W\to\R$
which will play a crucial role in the description of
the occurrence of Li-Yorke chaos and Auslander-Yorke chaos
(defined in the next subsections) in one of the dynamical
situations which we will consider in Section \ref{3.sec}.
Most of these properties are (basically) already known; but, to our knowledge,
Theorem \ref{2.teordensidad} presents a new property.
The assumption of minimality of $(\W,\sigma)$ is in force.
\begin{defi}\label{2.defcontpri}
A continuous  function $a\colon\W\to\R$ {\em admits a continuous primitive}
if there exists a continuous function $h_a\colon\W\to\R$ such that
$h_a(\wt)-h_a(\w)=\int_0^t a(\ws)\,ds$ for all $\w\in\W$ and $t\in\R$.
\end{defi}
\begin{nota}\label{2.notacontpri}
Note that $\sup_{(t,\w)\in\R\!\times\!\W}\left|\int_0^t a(\ws)\,ds\right|<\infty$
if $a$ admits a continuous primitive, and that
Birkhoff's ergodic theorem ensures that $\int_{\W}a(\w)\,dm=0$ for any
$m\in\mminv(\W,\sigma)$.
It is well-known that if $(\W,\sigma)$ is minimal (as in our case)
then $a$ admits a continuous primitive if and only if there exists $\ww\in\W$ with
$\sup_{t\ge 0}\left|\int_0^t a(\ww\pu s)\,ds\right|<\infty$ or with
$\sup_{t\le 0}\left|\int_0^t a(\ww\pu s)\,ds\right|<\infty$:
a proof is given in \cite[Proposition A.1]{jnot}.
\end{nota}
\begin{defi}\label{2.defR}
Given $m\in\mminv(\W,\sigma)$, $\mR_m(\W)$ is the set of continuous functions
$a\colon\W\to\R$ satisfying $\int_\W a(\w)\,dm=0$ which do not
admit a continuous primitive and such that
$\sup_{t\le 0}\int_0^t a(\ws)\,ds<\infty$ for $m$-a.e.~$\w\in\W$.
\end{defi}
There are well known examples of quasi-periodic functions
$a_0\colon\R\to\R$ giving rise
to a hull $\W$ and a map $a$ in the set $\mR_m(\W)$
corresponding to the unique ergodic measure on $\W$.
For example, those described in \cite{john9}
and in \cite{orta}. Our next result shows that it is nonempty whenever
the flow is minimal and non periodic.
The $\sigma$-ergodic measure $m_{\w_0}$
associated to every $\sigma$-generic point in the set $\wit\W_e\subseteq\W$
(of complete measure) is defined in Subsection \ref{2.subsecbasic}.
\begin{teor}\label{2.teornovacio}
Assume that the flow $(\W,\sigma)$ is minimal
and non periodic. Then,
\begin{itemize}
\item[\rm(i)] $\mR_m(\W)$ is nonempty
for any $m\in\mminv(\W,\sigma)$, and it contains functions
$a$ with $\Sigma_a=\{0\}$.
\item[\rm(ii)] In fact, there exist functions $a$ which belong to
$\bigcap_{m\in\mminv(\W,\sigma)}\mR_m(\W)$, with $\Sigma_a=\{0\}$.
\item[\rm(iii)] If $a\in\mR_m(\W)$ for a measure $m\in\mminv(\W,\sigma)$, then
there exists at least a measure $\wit m\in\mmerg(\W,\sigma)$ such that
$a\in\mR_{\wit m}(\W)$.
More precisely, $a\in\mR_{m_{\w_0}}(\W)$ for $m$-almost all the measures
$m_{\w_0}\in\mmerg(\W,\sigma)$ with $\w_0\in\wit\W_e$.
\end{itemize}
\end{teor}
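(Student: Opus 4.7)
Since $\mminv(\W,\sigma)$ is nonempty on a compact space, statement (ii) clearly implies statement (i), so the substantive work is to establish (ii) and (iii). For (ii), I would first note that Theorem~\ref{2.teorespectro}(ii) makes $\Sigma_a=\{0\}$ equivalent to $\int_\W a\,d\mu=0$ for every $\mu\in\mminv(\W,\sigma)$. So the goal reduces to producing a continuous $a\colon\W\to\R$ with zero mean against every invariant measure, no continuous primitive, and such that $\W^*:=\{\w\in\W:\sup_{t\le 0}\int_0^t a(\ws)\,ds<\infty\}$ has full measure for every $\mu\in\mminv(\W,\sigma)$ (which, once one checks that $\W^*$ is Borel and $\sigma$-invariant, is equivalent to having full measure for every ergodic $\mu$, by ergodic decomposition).

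The construction of such an $a$ is the main obstacle. My plan is to adapt the quasi-periodic examples of \cite{john9} and \cite{orta}: build $a$ as a carefully weighted telescoping series of continuous coboundary-type terms at suitably increasing time scales, so that the partial cocycle sums remain bounded above on $(-\infty,0]$ for $\mu$-a.e.\ $\w$ while the total cocycle stays unbounded (so that no continuous primitive exists, by Remark~\ref{2.notacontpri}). Continuity of $a$ and the zero-mean condition against every invariant measure will follow from summability of the coefficients; the non-periodicity of $(\W,\sigma)$ provides enough continuous mean-zero non-coboundaries for the construction to succeed. The genuinely delicate point will be the \emph{asymmetric} one-sided boundedness: positive and negative contributions along backward semiorbits must cancel in a controlled way for $\mu$-a.e.\ $\w$, while the forward cocycle must be left free to be unbounded.

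For (iii) the argument is purely ergodic-theoretic. Writing $b_\w(t):=\int_0^t a(\ws)\,ds$, the cocycle identity $b_{\w\pu r}(t)=b_\w(r+t)-b_\w(r)$ together with the continuity of $t\mapsto b_\w(t)$ on compact intervals shows that $\W^*$ is $\sigma$-invariant; moreover $\W^*=\bigcup_{N\in\N}\bigcap_{t\in\Q\cap(-\infty,0]}\{\w:b_\w(t)\le N\}$, with each level set closed in $\w$, so $\W^*$ is Borel. Since $a\in\mR_m(\W)$ gives $m(\W^*)=1$, the ergodic decomposition recalled at the end of Subsection~\ref{2.subsecbasic}, applied to $f=\mathbf{1}_{\W^*}$, yields $m_{\w_0}(\W^*)=1$ for $m$-a.e.\ $\w_0\in\wit\W_e$. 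Fix such a $\w_0$ and set $A:=\int_\W a\,dm_{\w_0}$. Applying Birkhoff's ergodic theorem to $m_{\w_0}$ both forward and backward in time (the latter via the time-reversed flow, which preserves $m_{\w_0}$) gives $b_\w(t)/t\to A$ as $t\to-\infty$ for $m_{\w_0}$-a.e.\ $\w$; if $A<0$ this forces $b_\w(t)\to+\infty$ along $t\to-\infty$ on a set of full $m_{\w_0}$-measure, contradicting $m_{\w_0}(\W^*)=1$. Hence $A\ge 0$ for $m$-a.e.\ $\w_0\in\wit\W_e$. The ergodic decomposition of the zero-mean condition writes $0=\int_\W a\,dm=\int_{\wit\W_e}A(\w_0)\,dm(\w_0)$, and combined with $A\ge 0$ $m$-a.e.\ this forces $A=0$ $m$-a.e., which gives $a\in\mR_{m_{\w_0}}(\W)$ for $m$-a.e.\ $\w_0\in\wit\W_e$.
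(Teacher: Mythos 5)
Your part (iii) is correct and is essentially the paper's own argument: ergodic decomposition of the characteristic function of $\W^*$ to get $m_{\w_0}(\W^*)=1$ for $m$-a.e.\ $\w_0$, Birkhoff backward in time to rule out $\int_\W a\,dm_{\w_0}<0$, and ergodic decomposition of the zero-mean condition to upgrade $\ge 0$ to $=0$ almost everywhere. (The remaining condition in Definition~\ref{2.defR}, non-existence of a continuous primitive, is measure-independent, so nothing more is needed there.)

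For (i)--(ii), however, there is a genuine gap: you have described a plan, not a proof, and the point you yourself flag as ``the genuinely delicate point'' (the asymmetric one-sided boundedness of the backward cocycle) is exactly the step that is missing. The paper resolves it by a specific normalization that your sketch does not identify. Fix $\w_0\in\W$; since the flow is non-periodic, the orbit arc $\mJ=\{\w_0\pu t\,|\;t\in[0,T]\}$ has measure zero, so Urysohn and regularity give a continuous $c_\ep\colon\W\to[0,1]$ equal to $1$ on $\mJ$ with $\int_\W c_\ep\,dm\le\ep$; setting $h_{b_\ep}(\w):=(1/T)\int_0^T c_\ep(\ws)\,ds$ and $b_\ep(\w):=(c_\ep(\w\pu T)-c_\ep(\w))/T$ produces a coboundary whose primitive is \emph{nonnegative}, equals $1$ at $\w_0$, and has integral $\le\ep$. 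Taking $a:=-\sum_n b_n$ with $\ep=2^{-n}$ and $\wit h:=\sum_n h_{b_n}$, one gets $\int_0^t a(\ws)\,ds=\wit h(\w)-\wit h(\wt)\le\wit h(\w)$ for \emph{all} $t$ on the full-measure invariant set $\{\wit h<\infty\}$ — the one-sided (indeed two-sided) bound is automatic because every summand's primitive is nonnegative — while recurrence of backward orbits to $\w_0$ (where $\wit h=\infty$) forces $\inf_{t\le 0}\int_0^t a(\ws)\,ds=-\infty$, precluding a continuous primitive; and $\Sigma_a=\{0\}$ follows because $a$ is a uniform limit of coboundaries. Your sketch contains none of this mechanism, so the construction is not actually carried out. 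Moreover, for (ii) you need the bound $\int_\W h_{b_n}\,d\wit m\le 2^{-n}$ to hold \emph{simultaneously for every} $\wit m\in\mminv(\W,\sigma)$; the paper obtains this by replacing $c_\ep$ with a power $c^{n_\ep}$ decreasing to $\chi_\mJ$ and applying Dini's theorem on the compact metrizable space $\mminv(\W,\sigma)$, a uniformity issue your proposal does not address at all.
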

\begin{proof}
(i) Let us fix $m\in\mminv(\W,\R)$. We begin by proving an auxiliary result. Let us fix $\w_0\in\W$,
and let us take $\ep>0$. Then, there exists a continuous function $b_\ep\colon\W\to\R$
with $\n{b_\ep}_\W:=\sup_{\w\in\W}|b_\ep(\w)|\le\ep$ which admits a continuous primitive
$h_{b_\ep}\colon\W\to[0,1]$ with
$h_{b_\ep}(\w_0)=1$ and $\int_{\W}h_{b_\ep}(\w)\,dm\le\ep$. In fact, we will construct
$b_\ep$ and $h_{b_\ep}$. We take $T\ge 2/\ep$, and note that $m(\{\w_0{\cdot}t\,|\;t\in[0,T]\})=0$,
since the flow is non periodic: otherwise we wold obtain a $\sigma$-orbit with inifite measure,
impossible. The regularity of $m$ and Uryshon's Lemma provide a continuous function $c_\ep\colon\W\to[0,1]$
such that $c_\ep(\w_0{\cdot}t)=1$ for $t\in[0,T]$ and with $\int_{\W}c_\ep(\w)\,dm\le\ep$.
We define $b_\ep(\w):=(c_\ep(\w{\cdot}T)-c_\ep(\w))/T$ and $h_{b_\ep}(\w):=(1/T)\int_0^T c_\ep(\ws)\,ds$,
and check that $(h_{b_\ep})'(\w):=(d/dt)\,h_{b_\ep}(\wt)|_{t=0}$ coincides with $b_\ep(\w)$.
Clearly, $\n{b_\ep}_\W\le 2/T\le\ep$. In addition, $h_{b_\ep}\ge 0$, and
$h_{b_\ep}(\w_0)=(1/T)\int_0^T c(\w_0{\cdot}s)\,ds=1$. Finally, using the $\sigma$-invariance of
$m$, we get
\[
\begin{split}
 \int_\W h_{b_\ep}(\w)\,dm
 &=\frac{1}{T}\int_\W\int_0^T c_\ep(\ws)\,ds\,dm
 =\frac{1}{T}\int_0^T \int_\W c_\ep(\ws)\,dm\,ds\\
 &=\frac{1}{T}\int_0^T \int_\W c_\ep(\w)\,dm\,ds
 =\int_\W c_\ep(\w)\,dm\le\ep\,,
\end{split}\]
which completes the proof of our initial assertion.
\par
This property allows us to construct a sequence $(b_n)$ of
continuous functions with continuous primitives $(h_{b_n})$ such
that $\n{b_n}_\W\le 1/2^n$ (so that
$\sum_{n=1}^\infty \n{b_n}_\W\le 1$), $h_{b_n}(\w)\in[0,1]$
for all $\w\in\W$, $\int_{\W} h_{b_n}(\w)\,dm\le 1/2^n$
(so that $\sum_{n=1}^\infty\int_{\W} h_{b_n}(\w)\,dm\le 1$),
and with $h_{b_n}(\w_0)=1$ for a previously fixed $\w_0\in\W$ and all $n\in\N$.
Let us call $\wit h(\w):=\sum_{n=1}^\infty h_{b_n}(\w)\in[0,\infty]$. Lebesgue's
monotone convergence theorem shows that $\int_\W \wit h(\w)\,dm=\sum_{n=1}^\infty\int_{\W}
h_{b_n}(\w)\,dm\le 1$, and hence
\[
 \wit\W:=\{\w\in\W\,|\;\wit h(\w)<\infty\}
\]
satisfies $m(\wit\W)=1$. Note also that $\w_0\notin\wit\W$. In addition,
$\wit\W$ is $\sigma$-invariant: for every $\w\in\W$, $j\in\N$ and $t\in\R$,
\[
 \sum_{n=1}^j h_{b_n}(\wt)=\sum_{n=1}^j h_{b_n}(\w)+\sum_{n=1}^j \int_0^t b_n(\ws)\,ds
 \le \sum_{n=1}^\infty  h_{b_n}(\w)+|t|\sum_{n=1}^\infty \n{b_n}_\W\,.
\]
\par
Let us define $a:=-\sum_{n=1}^\infty b_n$, which is a continuous function on $\W$.
Then, the function $h_a$ defined by
$h_a(\w):=-\sum_{n=1}^\infty h_{b_n}(\w)=-\wit h(\w)$ for $\w\in\wit\W$ and $h_a(\w):=0$ for $\w\notin\wit\W$
satisfies $h_a(\wit\w{\cdot}t)-h_a(\wit\w)=\int_0^t a(\wit\w{\cdot}s)\,ds$ for all $\w\in\wit\W$.
Observe that $\sup_{t\le 0}\int_0^t a(\wit\w{\cdot}s)\,ds=\sup_{t\le 0}(h_a(\wit\w{\cdot}t)-h_a(\wit\w))=
\sup_{t\le 0}(\wit h(\wit\w)-\wit h_a(\wit\w{\cdot}t))\le \wit h(\wit\w)<\infty$ for all $\wit\w\in\wit\W$.
Let us check that $\inf_{t\le 0}\int_0^t a(\wit\w{\cdot}s)\,ds=-\infty$ for all $\wit\w\in\wit\W$.
We fix $\wit\w\in\wit\W$ and $\w_0\notin\wit\W$ and look for $(t_k)\downarrow-\infty$
such that $\w_0=\lim_{k\to\infty}\wit\w{\cdot}(t_k)$. For any $j\in\N$,
\[
 \int^{t_k}_0 a(\wit\w{\cdot}s)\,ds=\sum_{n=1}^\infty(h_{b_n}(\wit\w)-h_{b_n}(\wit\w{\cdot}t_k))
 \le\sum_{n=1}^\infty h_{b_n}(\wit\w)-\sum_{n=1}^j h_{b_n}(\wit\w{\cdot}t_k)\,,
\]
and hence $\liminf_{k\to\infty}\int^{t_k}_0 a(\wit\w{\cdot}s)\,ds
\le\sum_{n=1}^\infty h_{b_n}(\wit\w)-\sum_{n=1}^j h_{b_n}(\w_0)$ for all $j\in\N$. By letting
$j$ increase, we check the assertion, which in turn precludes the existence of a
continuous primitive for $a$. Altogether, $a$ satisfies all the conditions
of Definition \ref{2.defR}, and hence $a\in\mR_m(\W)$.
\par
Finally, note that the map $a$ is the uniform limit of the sequence $(s_j)$, with
$s_j:=-\sum_{n=1}^j b_n$. Each one of the functions $s_j$ has a continuous primitive,
and hence $\int_{\W} s_j(\w)\,d\wit m=0$ for every $j\in\N$ and
$\wit m\in\mminv(\W,\sigma)$ (see Remark \ref{2.notacontpri}).
%In addition, $\n{s_j}_\W\le 1$ for all $j\in\N$ and $\n{a}_\W\le 1$.
%Lebesgue's dominated convergence theorem ensures that
Therefore, $\int_\W a(\omega)\,d\wit m=0$
for every $\wit m\in\mminv(\W,\sigma)$, and hence Theorem \ref{2.teorespectro} shows that
$\Sigma_a=\{0\}$.
\smallskip\par
(ii) The idea is to repeat the process of (i), but taking functions $(b_n)$ such that
$\int_{\W} h_{b_n}(\w)\,dm\le 1/(2^n)$ for all $m\in\mminv(\W,\R)$. Therefore, we must
change the initial step in the proof of (i) to show that, given $\w_0\in\W$ and $\ep>0$,
there exists a continuous function $b_\ep\colon\W\to\R$ with
$\n{b_\ep}_\W\le\ep$ which admits a
continuous primitive $h_{b_\ep}\colon\W\to[0,1]$ with $h_{b_\ep}(\w_0)=1$ and
$\int_{\W}h_{b_\ep}(\w)\,dm\le\ep$ for any $m\in\mminv(\W,\R)$. Let us call
$\mJ:=\{\w_0{\cdot}t\,|\;t\in[0,T]\}$, and look for a continuous function $c\colon\W\to[0,1]$
such that $c(\w)=1$ for $\w\in\mJ$ and $c(\w)<1$ for $\w\notin\mJ$.
Then, the sequence $(c^n)$ decreases pointwisely to the characteristic function of
$\mJ$, and hence Lebesgue's monotone convergence theorem ensures that
$\lim_{n\to\infty}\int_\W c^n(\w)\,dm=0$ for all $m\in\mminv(\W,\sigma)$.
We consider the maps $i_n\colon\mminv(\W,\sigma)\to\R\,,m\mapsto\int_\W c^n(\w)\,dm$,
which are continuous for the weak$^*$ topology of $\mminv(\W,\R)$. The space
$\mminv(\W,\R)$ is compact and metrizable for this topology
(see e.g.~\cite[Theorems 6.4 and 6.5]{walt}). The sequence $(i_n)$ decreases to the
function 0, and hence Dini's theorem ensures that $0=\lim_{n\to\infty}i_n$
uniformly on $\mminv(\W,\R$). Therefore, given $\ep>0$, there exists
$n_\ep\in\N$ such that $\int_\W c^{n_\ep}(\w)\,dm\le\ep$ for all $m\in\mminv(\W,\sigma)$.
We use $c^{n_\ep}$ to construct $b_\ep$ and $h_{b_\ep}$,
as at the beginnig of the proof of (i), and repeat the
rest of it to check (ii).
\smallskip\par
(iii) Let us call $\W^a_-:=\{\w\in\W\,|\;\sup_{t\le 0}\int_0^t a(\ws)\,ds<\infty\}$,
with $m(\W^a_-)=1$. Let $f$ be the characteristic function of
$\W^a_-$. As recalled in Subsection \ref{2.subsecbasic},
there exists a set $\W_f\subseteq\wit\W_e$ with $m(\W_f)=1$ such that
$m(\W^a_-)=\int_\W f(\w)\,dm=\int_{\W_f}\!\big(\int_\W f(\w)dm_{\w_0}\big)\,dm=
\int_{\W_f}m_{\w_0}(\W_-^a)\,dm$.
This ensures that $m_{\w_0}(\W_-^a)=1$ for $m$-almost every $\w_0\in\W_f$.
Let us take one of these points $\w_0$. Then $\int_{\W}a(\w)\,dm_{\w_0}\ge 0$:
if, on the contrary, $\wit a:=\int_{\W}a(\w)\,dm_{\w_0}<0$, then
Birkhoff's ergodic theorem ensures that $\wit a=\lim_{t\to-\infty}(1/t)
\int_0^t a(\ws)\,ds$ for $m_{\w_0}$-almost every $\w\in\W$, which in turn implies
$m_{\w_0}(\W^a_-)=0$, impossible. Now we look for a subset $\W_a\subseteq\wit\W_e$
with $m(\W_a)=1$ such that
$0=\int_\W a(\w)\,dm=\int_{\W_a}\!\big(\int_\W a(\w)dm_{\w_0}\big)\,dm$, and conclude
that $\int_\W a(\w)\,dm_{\w_0}$ for $m$-almost every point. Therefore,
$a$ satisfies the conditions of Definition \ref{2.defR} for
$m_{\w_0}$ for $m$-almost every $\w_0\in\wit\W_e$, as asserted.
\end{proof}
\par
{From} now on, $m\in\mmerg(\W,\sigma)$ is fixed.
\par
The next result summarizes part of the dynamical consequences
on the solutions of the family of linear scalar equation $x'=a(\wt)\,x$,
which are $x(t,\w,x_0)=x_0\,\exp\big(\int_0^t a(\ws)\,ds\big)$.
\begin{prop}\label{2.propR}
Let $a\colon\W\to\R$ be a continuous function with
$\int_\W a(\w)\,dm=0$. The following assertions are equivalent:
\begin{itemize}
\item[\rm(1)] $a\in\mR_m(\W)$.
\item[\rm(2)] The subset
$\W^a\subseteq\W$ of those points $\w$
such that
$\sup_{t\in\R}\int_0^t a(\ws)\,ds<\infty$,
$\inf_{t\le 0}\int_0^t a(\ws)\,ds=-\infty$, and
$\inf_{t\ge 0}\int_0^t a(\ws)\,ds=-\infty$,
is $\sigma$-invariant and satisfies $m(\W^a)=1$.
\item[\rm(3)] There exist an upper-semicontinuous function
$H_a\colon\W\to[0,1]$
and a $\sigma$-invariant set $\W^a\subseteq\W$ with $m(\W^a)=1$
such that: $\w\in\W^a$ if and only if $H_a(\w)>0$; and,
for all $\w\in\W$,
$H_a(\wt)=H_a(\w)\exp\big(\int_0^t a(\ws)\,ds\big)$ for all $t\in\R$,
$\inf_{t\le 0}H_a(\wt)=0$, and $\inf_{t\ge 0}H_a(\wt)=0$.
\end{itemize}
In addition, the function $H_a$ of point {\rm (3)} vanishes at its continuity points.
\end{prop}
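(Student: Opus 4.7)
My plan is to prove (2)$\Rightarrow$(1), (1)$\Rightarrow$(2), (2)$\Rightarrow$(3), and (3)$\Rightarrow$(2), and then the additional claim. Throughout set $A(t,\w):=\int_0^t a(\ws)\,ds$; the cocycle identity $A(s+t,\w)=A(s,\w)+A(t,\ws)$ yields the shift formulas $\sup_r A(r,\ws)=\sup_r A(r,\w)-A(s,\w)$ and the analogous ones for infima, so $\W^a$ is $\sigma$-invariant. For (2)$\Rightarrow$(1), $m(\W^a)=1$ gives $\sup_{t\le 0}A(t,\w)<\infty$ for $m$-a.e.\ $\w$; combined with $\inf_{t\le 0}A(t,\w)=-\infty$ on $\W^a$, this yields $\sup_{t\le 0}|A(t,\w)|=\infty$ on a set of full measure, precluding a continuous primitive $k$ (which would force $|A(t,\w)|\le 2\|k\|_\W$ for every $\w$). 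For (3)$\Rightarrow$(2), for $\w\in\W^a$ the cocycle $H_a(\wt)=H_a(\w)e^{A(t,\w)}$ and $H_a\le 1$ give $\sup_{t\in\R}A(t,\w)\le\log(1/H_a(\w))<\infty$, while the vanishing $\inf_{t\le 0}H_a(\wt)=\inf_{t\ge 0}H_a(\wt)=0$ combined with $H_a(\w)>0$ forces $\inf_{t\le 0}A=\inf_{t\ge 0}A=-\infty$.

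The nontrivial step is (1)$\Rightarrow$(2), where one must upgrade $\sup_{t\le 0}A<\infty$ a.e.\ to $\sup_{t\in\R}A<\infty$ a.e. I would invoke a flow version of Atkinson's theorem: since $\int_\W a\,dm=0$ and the backward supremum is a.e.\ finite, the generic divergence $\limsup_{t\to -\infty}A=+\infty$ is excluded, so $A$ must be a measurable coboundary $A(t,\w)=h(\wt)-h(\w)$ for some measurable $h$. Setting $M(\w):=\sup_{t\le 0}h(\wt)$, one checks $M\circ\sigma_s\ge M$ for $s\ge 0$, so $\sigma$-invariance of $m$ forces $M=M\circ\sigma_s$ a.e., and ergodicity makes $M$ a.e.\ constant with some value $M_0$; $M_0$ is finite because $F(\w)=M(\w)-h(\w)$ is a.e.\ finite. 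Hence $h\le M_0$ a.e., and $\sup_{t\in\R}A(t,\w)=\sup_t h(\wt)-h(\w)\le M_0-h(\w)<\infty$ a.e. Since $a$ has no continuous primitive, Remark~\ref{2.notacontpri} gives $\sup_{t\le 0}|A|=\sup_{t\ge 0}|A|=\infty$ for \emph{every} $\w$, and combining with the a.e.\ finiteness of both sups yields $\inf_{t\le 0}A=\inf_{t\ge 0}A=-\infty$ a.e., completing $m(\W^a)=1$. I expect this Atkinson step to be the main obstacle of the proof.

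For (2)$\Rightarrow$(3), using (2)$\Rightarrow$(1) already established, $a$ has no continuous primitive. I define
\[
 H_a(\w):=\exp\!\bigl(-\sup_{t\in\R}A(t,\w)\bigr),
\]
with $\exp(-\infty):=0$. The map $\w\mapsto\sup_{t\in\R}A(t,\w)$ is lower-semicontinuous as a pointwise supremum of continuous functions and is $\ge A(0,\w)=0$, so $H_a$ is upper-semicontinuous with values in $[0,1]$; the cocycle $H_a(\wt)=H_a(\w)e^{A(t,\w)}$ follows from the shift formula. For $\{H_a>0\}=\W^a$ the inclusion $\supseteq$ is trivial, and for $\subseteq$ any $\w$ with $\sup_{t\in\R}A<\infty$ and either $\inf_{t\le 0}A>-\infty$ or $\inf_{t\ge 0}A>-\infty$ would produce a bounded half-orbit of $A$, and hence a continuous primitive by Remark~\ref{2.notacontpri}, contradicting $a\in\mR_m(\W)$. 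The vanishing of $\inf_{t\le 0}H_a(\wt)$ and $\inf_{t\ge 0}H_a(\wt)$ at every $\w$ is trivial off $\W^a$, and on $\W^a$ follows from $H_a(\w)>0$ together with $\inf_{t\in\R}A=-\infty$ in both directions.

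For the final claim, suppose for contradiction that $H_a$ is continuous at some $\w_0$ with $H_a(\w_0)=c>0$, and pick a neighborhood $U$ of $\w_0$ on which $H_a>c/2$. By minimality, $\w_0$ is uniformly recurrent, so there is $L>0$ with $\{t\in\R:\w_0\cdot t\in U\}$ having gaps at most $L$. At each return time $t_n$ the cocycle yields $e^{A(t_n,\w_0)}=H_a(\w_0\cdot t_n)/c>1/2$, hence $A(t_n,\w_0)>-\log 2$, while $A(t,\w_0)\le -\log c$ holds globally because $H_a\le 1$. Since $a$ is continuous on compact $\W$, the oscillation of $A(\cdot,\w_0)$ between consecutive returns is at most $L\|a\|_\W$, so $A(\cdot,\w_0)$ is uniformly bounded on $\R$. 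Then $\sup_{t\le 0}|A(t,\w_0)|<\infty$, and Remark~\ref{2.notacontpri} produces a continuous primitive for $a$, contradicting $a\in\mR_m(\W)$.
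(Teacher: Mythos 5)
Your proposal is correct, but it follows a genuinely different route from the paper, which does not actually prove the equivalences: it defers them to \cite[Proposition 6.4]{nuob10}, recording only that the map $H_a(\w)=\inf_{t\in\R}\exp\big(-\int_0^t a(\ws)\,ds\big)$ --- precisely your $\exp\big(-\sup_{t\in\R}A(t,\w)\big)$ --- realizes (3), and then proves only the final claim about continuity points. You reconstruct the full cycle of implications, and you correctly isolate the one hard point: upgrading $\sup_{t\le 0}A(t,\w)<\infty$ a.e.\ to $\sup_{t\in\R}A(t,\w)<\infty$ a.e.\ in (1)$\Rightarrow$(2). Be aware that the result you invoke there is not Atkinson's theorem itself (which gives only recurrence, $\liminf_{t\to\pm\infty}|A(t,\w)|=0$ a.e.), but a known corollary: a zero-mean ergodic cocycle bounded on one side a.e.\ is a measurable coboundary. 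Since you leave it as a black box, note that it closes quickly with the canonical transfer function: putting $N(\w):=\sup_{t\le 0}A(t,\w)\in[0,\infty)$ a.e., the defect $w(s,\w):=N(\w)-A(s,\w)-N(\ws)$ is additive and nonnegative for $s\le 0$, and if it were not a.e.\ null it would tend to $+\infty$ backward in time, so Atkinson recurrence along a sequence with $A(-t_k,\w)\to 0$ would force $N(\w{\cdot}(-t_k))\to-\infty$, contradicting $N\ge 0$; hence $A(t,\w)=h(\wt)-h(\w)$ with $h:=-N\le 0$, and then $\sup_{t\in\R}A(t,\w)\le-h(\w)<\infty$ a.e.\ is immediate, short-circuiting your $M$-invariance paragraph (which is nevertheless correct for a general $h$, modulo the routine perfectization making the coboundary identity hold for all $t$ on an invariant full-measure set, needed when you pass from $h\le M_0$ a.e.\ to $\sup_{t\in\R}h(\wt)\le M_0$ a.e.). Your proof of the final claim also differs from the paper's: the paper covers $\W$ by finitely many translates $\sigma_{t_1}(\mB),\ldots,\sigma_{t_p}(\mB)$ of a ball around the continuity point, deduces that $H_a$ is positive and bounded below on all of $\W$, and contradicts $\inf_{t\ge 0}H_a(\wt)=0$ directly, whereas you use syndeticity of the return times of $\w_0$ to bound the cocycle along a single orbit and contradict the absence of a continuous primitive via Remark~\ref{2.notacontpri}; both are valid, but the paper's variant stays self-contained within (3), while yours uses the already-established implication (3)$\Rightarrow$(1).
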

\begin{proof}
The proof of the equivalences
repeats that of \cite[Proposition 6.4]{nuob10}: the map
\[
 H_a(\w)=\inf_{t\in\R}\:\frac{1}{\exp\big(\int_0^ta(\ws)\,ds\big)}
\]
satisfies all the assertions of point (3).
\par
Assume now that the semicontinuous function
$H_a$ satisfies the properties of (3) and,
by contradiction, that $H_a(\w_0)=\rho>0$ at a continuity point $\w_0$.
Then there is a nonempty open ball $\mB:=\mB_\W(\w_0,\delta)$ such that
$H_a(\w)>\rho/2$ for any $\w\in\mB$. The minimality of the flow
provides values of time $t_1<\ldots<t_p$ such that
$\W=\sigma_{t_1}(\mB)\cup\ldots\cup\sigma_{t_p}(\mB)$, from
where it follows easily that $H_a$ is always positive and bounded from
below. But this contradicts the last properties mentioned in (3).
\end{proof}
Observe that the previous result shows that the condition
$\sup_{t\le 0}\int_0^t a(\ws)\,ds<\infty$ for $m$-a.e.~$\w\in\W$
in Definition \ref{2.defR} can be replaced by
$\sup_{t\in\R}\int_0^t a(\ws)\,ds<\infty$ for $m$-a.e.~$\w\in\W$.
\par
Note also that, if $a\in\mR_m(\W)$ and $H_a$ and $\W^a$ are the
function and set of \eqref{2.propR}, then the difference between two solutions
of the equation of $x'=a(\wt)\,x$ for $\w\in\W^a$ is
\[
 x(t,\w,x_2)-x(t,\w,x_1)=(x_2-x_1)\exp\Big(\int_0^t a(\ws)\,ds\Big)=
 (x_2-x_1)\,\frac{H_a(\wt)}{H_a(\w)}\:.
\]
The next results shows that for almost every point $\w\in\W^a$,
the set of positive values of time at which the forward semiorbits
seem to coincide (or are \lq\lq indistinguishable")
has positive lower density; and the same property holds
for the set of positive values of time at which
the semiorbits are \lq\lq distinguishable". This fact
will be of relevance later, in the analysis of the type of
Li-Yorke chaos that we will detect for certain
nonhomogeneous linear dissipative scalar equations.
\par
Given a set $\mC\subset[0,\infty)$, we define its {\em lower density\/} as
\[
 d_l(\mC)=\liminf_{t\to\infty}\frac{1}{t}\:l([0,t]\cap\mC)\,,
\]
where $l$ is the Lebesgue measure on $\R$.
Let us take $a\in\mR_m(\W)$, $\w\in\W^a$, $\ep\in(0,1)$, define
\begin{equation}\label{2.dist}
\begin{split}
 \mI_\ep(\w)&:=\{t\ge 0\,|\;H_a(\wt)/H_a(\w)\le \ep\,\}\,,\\
 \mD_\ep(\w)&:=\{t\ge 0\,|\;H_a(\wt)/H_a(\w)\ge 1-\ep\,\}\,,
\end{split}
\end{equation}
and observe that these ones are the sets of values of time we referred
to before.
\begin{teor}\label{2.teordensidad}
Assume that $a\in\mR_m(\W)$, and let $\W^a$ be the set provided by Proposition
{\rm\ref{2.propR}}. Then, for every
$\ep\in(0,1)$ there exists a subset $\W_\ep\subseteq\W^a$ with $m(\W_\ep)=1$
such that, for any $\w\in\W_\ep$,
\begin{itemize}
\item[\rm(i)] the set $\mI_\ep(\w)$ has positive lower density and is relatively dense in $\R^+$.
\item[\rm(ii)] The set $\mD_\ep(\w)$ has positive lower density.
\end{itemize}
\end{teor}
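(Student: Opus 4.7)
The plan is to exploit the upper semicontinuity of $H_a$, Birkhoff's ergodic theorem applied to suitable characteristic functions, and the minimality of $(\W,\sigma)$, which guarantees $\Supp m=\W$, so every nonempty open set has positive $m$-measure.

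For (i), I would work with the open sets $U_N:=\{\w\in\W\,|\;H_a(\w)<\ep/N\}$ for $N\in\N$. By Proposition \ref{2.propR}, $\inf_{t\ge 0}H_a(\wt)=0$ for every $\w$, so every forward orbit enters $U_N$; hence $U_N\ne\emptyset$ and $m(U_N)>0$. Birkhoff's theorem applied to $\chi_{U_N}$ yields a full-measure set $\W_N\subseteq\W$ on which the forward-time frequency of visits to $U_N$ converges to $m(U_N)$. Put $\W_\ep^{(1)}:=\W^a\cap\bigcap_{N\in\N}\W_N$. Given $\w\in\W_\ep^{(1)}$, choose any $N$ with $H_a(\w)\ge 1/N$; then $\{s\ge 0\,|\;\ws\in U_N\}\subseteq\mI_\ep(\w)$, since $H_a(\ws)<\ep/N\le\ep\,H_a(\w)$. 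Consequently $d_l(\mI_\ep(\w))\ge m(U_N)>0$. For relative density one uses the standard minimality argument: from $\bigcup_{t\in\R}\sigma_{-t}(U_N)=\W$ and compactness one extracts $T>0$ with $\W=\bigcup_{t\in[0,T]}\sigma_{-t}(U_N)$, so $\{t\ge 0\,|\;\wt\in U_N\}$ meets every interval of length $T$, and hence so does the larger set $\mI_\ep(\w)$.

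For (ii), the idea is to partition $\W^a=\{H_a>0\}$ into geometrically decaying level sets in order to replace the $\w$-dependent threshold $(1-\ep)\,H_a(\w)$ by a fixed one. Set $\rho:=1-\ep/2\in(0,1)$ and, for $k=0,1,2,\ldots$,
\[
 L_k:=\{\w\in\W\,|\;\rho^{k+1}<H_a(\w)\le\rho^k\}\,.
\]
These are pairwise disjoint Borel sets with $\W^a=\bigsqcup_k L_k$. Let $S:=\{k\,|\;m(L_k)>0\}$, a countable set satisfying $m\bigl(\bigsqcup_{k\in S}L_k\bigr)=1$. For each $k\in S$, Birkhoff applied to $\chi_{L_k}$ yields a full-measure set $\W_{k,2}$ on which the lower density of $\{s\ge 0\,|\;\ws\in L_k\}$ equals $m(L_k)$. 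Put $\W_\ep^{(2)}:=\bigl(\bigsqcup_{k\in S}L_k\bigr)\cap\bigcap_{k\in S}\W_{k,2}$. If $\w\in\W_\ep^{(2)}$ and $k\in S$ is the unique index with $\w\in L_k$, then any $s$ with $\ws\in L_k$ satisfies $H_a(\ws)/H_a(\w)>\rho^{k+1}/\rho^k=\rho>1-\ep$, so $\{s\ge 0\,|\;\ws\in L_k\}\subseteq\mD_\ep(\w)$ and $d_l(\mD_\ep(\w))\ge m(L_k)>0$.

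Finally, one takes $\W_\ep:=\W_\ep^{(1)}\cap\W_\ep^{(2)}$, still of full measure. The main obstacle is not the pointwise estimates but the bookkeeping: the Birkhoff null sets depend on $\delta=\ep/N$ in (i) and on $k$ in (ii), and one must intersect countably many of them so that a single $\W_\ep$ works for every point in it. The key conceptual inputs are the upper semicontinuity of $H_a$ (so $\{H_a<\delta\}$ is open, hence of positive $m$-measure by minimality) and the geometric level-set decomposition, which turns the $\w$-dependent inequality defining $\mD_\ep(\w)$ into a plain return-time statement for the fixed set $L_k$.
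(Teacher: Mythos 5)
Your proof is correct. Part (i) is essentially the paper's argument: the paper fixes $n$ with $m(\{H_a\ge 1/n\})>0$ and uses a small ball around a continuity point of $H_a$ (where $H_a$ vanishes) on which $H_a\le\ep/n$, whereas you use the open sublevel set $U_N=\{H_a<\ep/N\}$ directly; the core estimate ($H_a(\ws)$ small and $H_a(\w)$ bounded below forces $s\in\mI_\ep(\w)$), the appeal to minimality for positive measure, Birkhoff's theorem, and the covering argument for relative density are all the same, and your bookkeeping (intersecting the Birkhoff full-measure sets over all $N$) is if anything slightly tidier than the paper's union over the sets $\mC_n$. Part (ii) is where you genuinely diverge: the paper introduces the essential supremum $\eta=\inf\{k\,|\;m(\{H_a\ge k\})=0\}$ and uses the single set $\Delta_\ep=\{H_a>(1-\ep)\,\eta\}$, which has positive measure by definition of $\eta$ and whose visit times lie in $\mD_\ep(\w)$ for every $\w$ with $H_a(\w)\le\eta$ (a full-measure condition); you instead partition $\W^a$ into the geometric level sets $L_k=\{\rho^{k+1}<H_a\le\rho^k\}$ and observe that returns of $\w\in L_k$ to its own level set land in $\mD_\ep(\w)$. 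Both are valid; the paper's device yields a uniform lower bound $m(\Delta_\ep)$ on the density valid for a.e.\ $\w$ from a single application of Birkhoff, while yours gives the $\w$-dependent bound $m(L_{k(\w)})$ at the cost of intersecting countably many Birkhoff null sets --- a trade-off that is immaterial for the statement as it only asks for positivity.
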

\begin{proof}
(i) Let us define $\mC_n:=\{\w\in\W^a\,|\;H_a(\w)\ge 1/n\}$.
Since $\mC_n\subseteq\mC_{n+1}$ and $\W^a=\bigcup_{n\in\N}\,\mC_n$
(see point (3) of Proposition \ref{2.propR}), we have
$\lim_{n\to\infty}m(\mC_n)=1$, and hence
$m(\mC_n)>0$ for $n\ge n_0$. We will work with a fixed $n\ge n_0$.
We also fix $\ep\in(0,1)$.
Let us take a continuity point $\w_0$ of $H_a$, so that $H_a(\w_0)=0$ (see
again Proposition \ref{2.propR}),
and look for a nonempty open ball $\mB_\ep:=\mB_\W(\w_0,\delta_\ep)$ such that
$H_a(\w)\le\ep/n$ if $\w\in\mB_\ep$. Note that if $\w\in\mC_n$ and
$\ws\in\mB_\ep$ then
$H_a(\ws)/H_a(\w)\le\ep$; that is,
$s\in\mI_\ep(\w)$. Since $\mB_\ep$ is open and $(\W,\sigma)$ is minimal,
$m(\mB_\ep)>0$. Birkhoff's ergodic theorem ensures that, for $m$-almost every
$\w\in\mC_n$,
\[
\begin{split}
 0&<m(\mB_\ep)=\lim_{t\to\infty}\frac{1}{t}\int_0^t\chi_{\mB_\ep}(\ws)\,ds
 =\lim_{t\to\infty}\frac{1}{t}\:l(\{s\in[0,t]\,|\;\ws\in\mB_\ep\})\\
 &\le\liminf_{t\to\infty}\frac{1}{t}\:l(\{s\in[0,t]\,|\;s\in\mI_\ep\})=
 \liminf_{t\to\infty}\frac{1}{t}\:l([0,t]\cap\mI_\ep(\w))=d_l(\mI_\ep)\,.
\end{split}
\]
This proves the assertion concerning the lower density for $m$-almost all
the elements of $\mC_n$, and hence for $m$-almost all the points of $\W^a$.
\par
To check that
$\mI_\ep(\w)$ is relatively dense in $\R$, we deduce from the minimality of the
base flow and the open character of $\mB_\ep$
that there exist positive values of time $t_1<\cdots<t_p$
such that $\W\subset\sigma_{-t_1}(\mB_\ep)\cup\ldots\cup\sigma_{-t_p}(\mB_\ep)$.
In particular, for any $\w\in\W$ there exists $t\in[0,t_p]$ such that
$\wt\in\mB_\ep$. We take $\w\in\W^a$. Given $s\in\R^+$, we look for
$t\in[0,t_p]$ such that $(\ws){\cdot}t=\w{\cdot}(s+t)\in\mB_\ep$, which
ensures that $\wit s=s+t\in\mI_\ep(\w)$. This ensures that
$\mI_\ep(\w)$ is relatively dense in $\R^+$, and completes the proof of (i).
\smallskip\par
(ii) Let us define $\eta:=\inf\{k\in\R\,|\;
m(\{\w\in\W\,|\;H_a(\w)\ge k\})=0\}\le 1$ and $\W_0:=\{\w\in\W\,|\;H_a(\w)\le\eta\}$,
and note that $m(\W_0)=1$. We fix $\ep\in(0,1)$, define
$\Delta_\ep:=\{\w\in\W\,|\;H_a(\w)>(1-\ep)\,\eta\}$,
and observe that the definition of $\eta$ ensures that $m(\Delta_\ep)>0$.
Now we take $\w\in\W^a\cap\W_0$, and note that the
set $\{t\ge 0\,|\;\wt\in\Delta_\ep\}$ is contained in $\mD_\ep(\w)$,
since $H_a(\wt)/H_a(\w)>(1-\ep)\,\eta/H_a(\w)\ge (1-\ep)$.
\par
Birkhoff's ergodic theorem ensures that, for $m$-almost every
$\w\in\W^a\cap\W_0$ (that is, in a set $\wit\W_\ep$ with $m(\wit\W_\ep)=1$),
\[
\begin{split}
 0&<m(\Delta_\ep)=\lim_{t\to\infty}\frac{1}{t}\int_0^t\chi_{\Delta_\ep}(\ws)\,ds
 =\lim_{t\to\infty}\frac{1}{t}\:l(\{s\in[0,t]\,|\;\ws\in\Delta_\ep\})\\
 &\le\liminf_{t\to\infty}\frac{1}{t}\:l(\{s\in[0,t]\,|\;s\in\mD_\ep(\w)\})=
 \liminf_{t\to\infty}\frac{1}{t}\:l([0,t]\cap\mD_\ep(\w))=d_l(\mD_\ep(\w))\,.
\end{split}
\]
This proves (ii).
\end{proof}
\begin{nota}
The set $\mD_\ep(\w)$ of the previous theorem is never relatively dense. To check this,
we take $\w\in\W^a$ and $(t_n)\uparrow\infty$ such that $\wit\w:=\lim_{n\to\infty}\wt_n=
\wit\w$ is a continuity point for the semicontinuous function $H_a$, so that $H_a(\wit\w)=0$.
Then, $\lim_{n\to\infty}H_a(\w{\cdot}(t_n+t))=
\lim_{n\to\infty}H(\wt_n)\exp\big(\int_{0}^{t} a(\w{\cdot}(t_n+s))\,ds\big)=0$ uniformly for $t$
in any compact interval of $\R$. Therefore, given $\ep\in(0,1/2)$ and $t_*>0$, there exists
$n_0$ such that $H(\w{\cdot}(t_{n_0}+t))\le \ep H(\w)$ for all $t\in[0,t_*]$, so that
$[t_{n_0},t_{n_0}+t_*]\cap \mD_\ep(\w)$ is empty. The assertion follows from the
fact that $t_*$ is arbitraritly chosen.
\end{nota}
%%%%%%%%%%%%%%%%%%%%%%%%%%%%%%%%%%%%%%%%%%%%%%%%%%%%%%%%%%%%
%%%%%%%%%%%%%%%%%%%%%%%%%%%%%%%%%%%%%%%%%%%%%%%%%%%%%%%%%%%%
\subsection{Li-Yorke chaos}\label{2.subsecLY}
As already mentioned, in this paper we will deal with
two types of chaos: Li-Yorke (now defined) and Auslander-Yorke
(defined in Subsection \ref{2.subsecAY}).
The minimality of the flow $(\W,\sigma)$ is not assumed in what follows.
\begin{defi}\label{2.defiLiYorke}
Let $(\W,\sigma)$ be a continuous flow on a compact metric space.
Let $\w_1,\w_2$ be two points of $\W$ whose forward $\sigma$-semiorbits
are globally defined. The points $\w_1,\w_2$ form a
{\em positively distal pair\/} for $\sigma$
if $\liminf_{t\to\infty}\dist_{\W}(\sigma_t(\w_1),\sigma_t(\w_2))>0$, and
a {\em positively asymptotic pair\/} if
$\limsup_{t\to\infty}\dist_{\W}(\sigma_t(\w_1),\sigma_t(\w_2))=0$.
The points $\w_1,\w_2$ form
{\em Li-Yorke pair\/} for the flow if the pair is
neither positively distal nor positively asymptotic.
A set $\mS\subseteq\W$ such that every pair of different points of
$\mS$ form a Li-Yorke pair is called
a {\em scrambled set} for the flow. The flow $(\W,\sigma)$
is {\em Li-Yorke chaotic}\/ if there exists an uncountable
scrambled set.
\end{defi}
After the initial description of this type of chaos for a
certain type of transformations in \cite{liyo}, there
have appear more exigent definitions, like that of Li-Yorke
sensitivity in \cite{akko}. That is also the case of the next
definition, particular for skewproduct flows.
\begin{defi}\label{2.defLiYorkmedida}
Let $(\W\times\R,\tau)$ be a skewproduct flow
over a minimal base $(\W,\sigma)$, and let
$\mK\subseteq\W\times\R$ be a $\tau$-invariant compact set. Then
the restricted flow $(\mK,\tau)$ is {\em Li-Yorke fiber-chaotic in measure
with respect to $m\in\mmerg(\W,m)$\/}
if there exists a set $\W_0\subseteq\W$
with $m(\W_0)=1$ such that $\mK$ contains an uncountable scrambled set
of Li-Yorke pairs with first component $\w$ for each $\w\in\W_0$.
\end{defi}
\begin{nota}\label{2.notaLiYorke}
It is clear that, in the case of skewproduct flow
$(\W\times\R,\tau)$, a pair of
points $(\w,x_1),\,(\w,x_2)$ (with common first component) form: a
positively distal pair if and only if
$\liminf_{t\to\infty}|x(t,\w,x_1)-x(t,\w,x_2)|>0$; a positively asymptotic pair
if and only if $\limsup_{t\to\infty}|x(t,\w,x_1)-x(t,\w,x_2)|=0$;
and a Li-Yorke pair if these two conditions fail.
\par
We point our again that the notion of Li-Yorke fiber-chaos in measure
makes only sense in the setting of
skewproduct flows. The same happens with the notion of residually
Li-Yorke chaotic flow, previously analyzed in \cite{bjjo}
and \cite{huyi}. Li-Yorke chaos for nonautonomous
dynamical systems is also the object of analysis in \cite{calo},
\cite{clos} and \cite{nuob10}.
\end{nota}
%%%%%%%%%%%%%%%%%%%%%%%%%%%%%%%%%%%%%%%%%%%%%%%%%%%%%%%%%%%%
%%%%%%%%%%%%%%%%%%%%%%%%%%%%%%%%%%%%%%%%%%%%%%%%%%%%%%%%%%%%
\subsection{Auslander-Yorke chaos}\label{2.subsecAY}
As in Subsection \ref{2.subsecLY}, the minimality of the flow $(\W,\sigma)$ is
not initially required (although we will assume it later to talk about
skewproduct flows).
\begin{defi}\label{2.defiAY}
Let $(\W,\sigma)$ be a continuous flow on a compact metric space. The flow is
{\em topologically transitive\/} if for any two open subsets
$\mU$ and $\mV$ there exists $t>0$ such that $\sigma_t(\mU)\cap\mV$ is nonempty.
The flow is {\em sensitive\/} or {\em $\ep$-sensitive\/} ({\em with respect to
initial conditions\/}) if there exists $\ep>0$ such that for any $\w_1\in\W$ and
$\delta>0$ there exists $\w_2\in\mB_\W(\w_1,\delta)$  such that
$\sup_{t\ge 0}\dist_\W(\w_1{\cdot}t,\w_2{\cdot}t)>\ep$. The flow is
{\em Auslander-Yorke chaotic\/} if it is topologically transitive and sensitive.
\end{defi}
\begin{notas}\label{2.notasuno}
1.~Observe that this concept of chaos relies deeply on the set we are considering.
More precisely, if the restriction of the flow to a $\sigma$-invariant compact
set $\mK\varsubsetneq\W$ is Li-Yorke chaotic, then so is the global flow. But
this property is not true for Auslander-Yorke chaos, since neither the
transitivity nor the sensitivity on $\mK$ are inherited for the containing set
$\W$.
\par
2.~A point $\w_1\in\W$ is {\em $\ep$-sensitive\/} for an $\ep>0$ if for any
$\delta>0$ there exists $\w_2\in\mB_\W(\w_1,\delta)$ such that
$\sup_{t\ge 0}\dist_\W(\w_1{\cdot}t,\w_2{\cdot}t)>\ep$. The flow
$(\W,\sigma)$ is sensitive if there exists a common $\ep>0$ such
every $\w\in\W$ is $\ep$-sensitive, and this definition
is also valid for a flow on a complete metric space.  A point
is {\em sensitive\/} if it is $\ep$-sensitive for some $\ep>0$.
A non sensitive point is called {\em Lyapunov stable}.
\par
3.~In the case of a compact metric space $\W$, topological
transitivity and point transitivity are equivalent, as proved in
e.g.~\cite[Lemma 3]{auyo}. Point transitivity means the existence
of a dense forward semiorbit, which in general is less restrictive.
A compact set $\W$ with a point transitive and sensitive flow
was called {\em chaotic} by Kaplan and Yorke \cite{kayo}.
An exhaustive analysis of the relation among topological
transitivity, point transitivity, and many other a priori stronger
conditions in more general topological spaces is done in \cite[Theorem 1.4]{akca}.
\end{notas}
The next fundamental result is proved by Glasner and Weiss
in \cite[Theorem 1.3]{glwe4} for the case of
a surjective continuous transformation, which provides a
discrete-time semiflow; but its proof can be easily
adapted to the case of a real flow (see also \cite[Proposition 2.4]{gljk}).
Recall that, if $\W$ is a compact metric space, then
the set $\mminv(\W,\sigma)$ of $\sigma$-invariant measures is nonempty,
and that topological transitivity and point transitivity are equivalent properties
(see Remark \ref{2.notasuno}.3): we will simply say {\em transitivity}.
The definition of equicontinuous (or almost periodic) flow on a compact space
is given in Subsection \ref{2.subsecbasic}.
\begin{teor}\label{2.teorAY}
Let $(\W,\sigma)$ be a continuous flow on a compact metric space.
Assume that the flow is transitive, and that $\W$ is the support of a
measure $m\in\mminv(\W,\sigma)$.
Then,
\begin{itemize}
\item[(1)] either the flow is minimal and equicontinuous,
\item[(2)] or it is Auslander-Yorke chaotic.
\end{itemize}
\end{teor}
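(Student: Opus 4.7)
The argument adapts \cite[Theorem 1.3]{glwe4} from discrete to continuous time. Since transitivity is part of the hypotheses, case~(2) only requires sensitivity, so it suffices to show that if $(\W,\sigma)$ is \emph{not} sensitive then it is minimal and equicontinuous. Assume non-sensitivity. For each $\ep>0$ define the open set
\begin{equation*}
\W_\ep:=\bigl\{\w\in\W\,:\,\exists\,\delta>0\text{ with }\dist_\W(\wt,\w'\pu t)\le\ep\text{ for all }\w'\in\mB_\W(\w,\delta)\text{ and all }t\ge 0\bigr\}.
\end{equation*}
Non-sensitivity is precisely the statement that $\W_\ep\ne\emptyset$ for every $\ep>0$, say $\wit\w\in\W_\ep$ with modulus $\delta$. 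The first step is to show that every transitive point lies in $\W_\ep$: given a transitive point $\w_0$, density of its forward semiorbit provides $t_0\ge 0$ with $\w_0\pu t_0\in\mB_\W(\wit\w,\delta/2)$; for $\w$ in a sufficiently small neighborhood of $\w_0$ (small enough that $\w\pu t_0\in\mB_\W(\wit\w,\delta)$ and also $\dist_\W(\w_0\pu s,\w\pu s)\le\ep$ for $s\in[0,t_0]$ by uniform continuity of $(s,\w)\mapsto\w\pu s$ on $[0,t_0]\times\W$), the $\ep$-stability at $\wit\w$ yields $\dist_\W(\w_0\pu{(t_0+t)},\w\pu{(t_0+t)})\le\ep$ for all $t\ge 0$. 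Hence $\w_0\in\W_\ep$. Since transitive points are dense, $\W_\ep$ is dense, and the set of \emph{equicontinuity points} $\W_{\mathrm{eq}}:=\bigcap_{n\ge 1}\W_{1/n}$ is a dense $G_\delta$ containing every transitive point.

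The full-support hypothesis on $m$ is used to promote this almost-equicontinuous picture to minimality. Fix a transitive equicontinuity point $\w_0$. Given $\eta>0$, take $\delta\in(0,\eta]$ so that $\dist_\W(\w_0\pu t,\w\pu t)\le\eta$ for all $t\ge 0$ and $\w\in\mB_\W(\w_0,\delta)$. Since $\w_0\in\Supp m$, $m(\mB_\W(\w_0,\delta))>0$, and Poincar\'e recurrence yields $\wit\w\in\mB_\W(\w_0,\delta)$ together with $t_n\uparrow\infty$ such that $\wit\w\pu t_n\in\mB_\W(\w_0,\delta)$; the equicontinuity bound at $\w_0$ then forces $\dist_\W(\w_0\pu t_n,\w_0)\le\delta+\eta\le 2\eta$. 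Letting $\eta\to 0$ shows that $\w_0$ is recurrent. A standard argument (an equicontinuity point that is recurrent is uniformly recurrent) implies that the forward semiorbit closure of $\w_0$ is a minimal set; combined with transitivity of $\w_0$, this gives that $\W$ itself is minimal.

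Finally, with minimality established, a standard compactness argument promotes equicontinuity at the single point $\w_0$ to equicontinuity of the whole flow: given $\w\in\W$, minimality provides $s\in\R$ with $\w\pu s\in\mB_\W(\w_0,\delta/2)$, and uniform continuity of $\sigma_{-s}$ on a small neighborhood transfers the modulus from $\w_0$ back to $\w$; compactness of $\W$ yields a uniform modulus valid across the whole space. This places us in case~(1). The main obstacle is the middle step, where one must leverage the essentially forward-in-time definition of $\W_\ep$ together with Poincar\'e recurrence to pass to the bilateral statement of minimality; the full support of $m$ is exactly what makes recurrence available in the key neighborhoods of $\w_0$, and without it the dichotomy can fail.
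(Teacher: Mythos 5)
Note first that the paper does not actually prove this theorem: it invokes \cite[Theorem 1.3]{glwe4} (see also \cite[Proposition 2.4]{gljk}) and asserts that the adaptation from a surjective map to a real flow is routine, so the benchmark for your attempt is the Glasner--Weiss argument itself. Your first stage (non-sensitivity plus transitivity imply every transitive point is an equicontinuity point, so the flow is almost equicontinuous) and your final stage (minimality plus equicontinuity at one point give equicontinuity of the flow) are correct adaptations, up to harmless details: the triangle inequality through $\wit\w\pu t$ gives $2\ep$ rather than $\ep$, and your sets $\W_\ep$ are not literally open, though $\W_\ep\subseteq\inter\W_{2\ep}$, which is all the $G_\delta$ claim needs. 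The genuine gap is the middle stage: the parenthetical lemma ``an equicontinuity point that is recurrent is uniformly recurrent'' is false. Recurrence of $\w_0$, equicontinuity at $\w_0$, and density of its forward semiorbit yield only uniform rigidity (there are $t_n\uparrow\infty$ with $\sup_{\w\in\W}\dist_\W(\wt_n,\w)\to 0$): when one tries to concatenate near-returns, the tolerance degrades ($\dist_\W(\w_0\pu(s_1+s_2),\w_0)\le\ep+\delta$, then $2\ep+\delta$, and so on), so no syndetic return structure, and hence no minimality, follows. This is not a reparable slip of exposition within your scheme: there exist transitive, almost equicontinuous, uniformly rigid, \emph{non-minimal} systems (examples of Glasner--Weiss and of Akin--Auslander--Berg, constructed precisely to show that the full-support measure hypothesis is indispensable) whose transitive point is a recurrent equicontinuity point that is not uniformly recurrent; your argument as written would ``prove'' those systems minimal.

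The repair, which is how \cite{glwe4} actually conclude, is to use the measure a second time, to obtain \emph{syndetic} near-returns rather than mere recurrence. With $U:=\mB_\W(\w_0,\delta)$ and $m(U)>0$, the return-time set $R:=\{n\in\N\,|\;m(U\cap\sigma_{-n}(U))>0\}$ is syndetic (Khintchine's recurrence theorem applied to the time-one map, or the Poincar\'{e} pigeonhole argument: among any $N>1/m(U)$ times, two of the sets $\sigma_{-t_i}(U)$ must overlap in positive measure). For $n\in R$ pick $\w\in U$ with $\w\pu n\in U$; equicontinuity at $\w_0$ gives $\dist_\W(\w_0\pu n,\w\pu n)\le\ep$, whence $\dist_\W(\w_0\pu n,\w_0)\le\ep+\delta$. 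Thus $\w_0$ returns to every neighborhood of itself along a syndetic set, i.e., $\w_0$ is an almost periodic point, so $\W=\cls\{\w_0\pu t\,|\;t\ge 0\}$ is minimal, and from there your concluding compactness argument applies unchanged. Your Poincar\'{e} step itself is fine, but it must be upgraded in this way before minimality can be claimed.
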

That is, in the case of a transitive flow on a compact metric space,
uniform stability and Auslander-Yorke chaos
are indeed opposite terms (see Remark \ref{2.enfibra} and observe that the
sensitivity of a flow precludes its stability).
\begin{coro}\label{2.coroAY}
Let $(\W,\sigma)$ be a continuous flow on a compact metric space.
If it is minimal, then it is either equicontinuous or Auslander-Yorke chaotic.
\end{coro}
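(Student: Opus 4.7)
The plan is to derive this directly from Theorem \ref{2.teorAY} by verifying its two hypotheses. First, I would observe that minimality implies transitivity: every forward semiorbit of a point in a minimal set is dense, so in particular the flow is point transitive (equivalently, topologically transitive, by Remark \ref{2.notasuno}.3). This handles the transitivity hypothesis of Theorem \ref{2.teorAY}.

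Next, I would invoke the existence of an invariant measure. Since $\W$ is compact, the set $\mmerg(\W,\sigma)$ is nonempty, as recalled in Subsection \ref{2.subsecbasic}; pick any $m \in \mmerg(\W,\sigma)$. Its support $\Supp m$ is a nonempty, compact, $\sigma$-invariant subset of the minimal space $\W$, and therefore must coincide with $\W$ itself. So $\W$ is the support of an invariant measure, verifying the second hypothesis of Theorem \ref{2.teorAY}.

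Applying Theorem \ref{2.teorAY} then gives the dichotomy: either $(\W,\sigma)$ is minimal and equicontinuous, or it is Auslander-Yorke chaotic. Since minimality is already given, the first alternative reduces to equicontinuity, and the corollary follows.

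No real obstacle is expected, since Theorem \ref{2.teorAY} already contains the substantive content; the corollary is a packaging step that simply checks the two hypotheses under the stronger assumption of minimality. The only point deserving care is the invocation that $\Supp m = \W$ for any invariant measure on a minimal compact space, but this is standard and was explicitly noted in the preliminaries.
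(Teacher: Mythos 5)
Your proposal is correct and follows exactly the route the paper intends: the corollary is deduced from Theorem \ref{2.teorAY} by noting that minimality gives (point, hence topological) transitivity, that $\mminv(\W,\sigma)$ is nonempty by compactness, and that $\Supp m=\W$ for any invariant measure on a minimal space, all facts recorded in Subsection \ref{2.subsecbasic}. No gap; this matches the paper's (implicit) argument.
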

Recall that an equicontinuous minimal flow is uniquely ergodic, so that
the ergodic uniqueness is also a property required to avoid the presence of
Auslander-Yorke chaos. On the other hand, point transitivity is equivalent to the
fact that $\W$ is the omega limit set of one of its points. In particular,
an Auslander-Yorke chaotic flow is always chain recurrent:
see Subsection \eqref{2.subsecbasic} and \cite[Section 8]{selg}.
\par
Let us now talk about Auslander-Yorke chaos for $\tau$-invariant
compact subsets of $\W\times\R$, where $(\W,\sigma)$ is a minimal
flow on a compact metric space and $(\W\times\R,\tau)$ is the
skewproduct flow projecting on $(\W,\sigma)$
induced by a family of the type \eqref{2.ecescalar}
given by a continuous function $f\colon\W\times\R$ which is locally
Lipschtiz with respect to the state variable $x$.
\par
The papers \cite{john8} and \cite{orta4} describe examples of
families of linear equations $x'=a(\wt)\,x+b(\wt)$ over an almost periodic
(and hence equicontinuous) base
flow $(\W,\sigma)$ for which there exists just one minimal set $\mM$,
which in addition is not a copy of the base. Let us take $r_1<r_2$
such that $\mM\subset\W\times[r_1,r_2]$, and define
$g(x)$ as $(x-r_1)^2$ for $x<r_1$, $0$ for $x\in[r_1,r_2]$, and
$-(x-r_2)^2$ for $x>r_2$. Then the families
$x'=a(\wt)\,x+b(\wt)+g(\wt,x)$, which satisfy all the conditions
which we will assume on Section \ref{3.sec}, define a flow
$(\W\times\R,\tau)$ for which (obviously) $\mM$ is a $\tau$-minimal set,
and the restricted semiflow $(\mM,\tau)$ is Auslander-Yorke chaotic
(see Remark~\ref{2.notasAY}.2 below). In
Section \ref{3.sec}, Theorem \ref{3.teorSup0-chaosAY},
we will establish conditions under which
Auslader-Yorke chaos appears for {\em infinitely many\/} $\tau$-invariant
compact sets. The proof of the result is strongly based on the next theorem.
It describes a branch of $\tau$-invariant compact sets for which the restricted flows
satisfy the initial hypotheses in Theorem \ref{2.teorAY}, which makes them the suitable sets
to detect the presence of Auslander-Yorke chaos. These sets are previously
known to coincide with the support of a $\tau$-ergodic measure.
\par
Before stating Theorem \ref{2.teorAYsec3}, we explain some basic facts used in its proof.
\begin{notas}\label{2.notasAY}
1.~Theorem \ref{2.teorAY} provides additional information
in the context of such a scalar skewproduct flow $(\W\times\R,\tau)$, associated to
the family \eqref{2.ecescalar}. The target is to analyze
the possible presence of Auslander-Yorke chaos for a given
$\tau$-invariant compact set $\mK\subset\W\times\R$ which is transitive and
the support of a $\tau$-invariant set.
Recall that the base flow $(\W,\sigma)$ is always assumed to be minimal, and that
any $\tau$-invariant minimal set is an almost automorphic extension of the base:
see Subsection \ref{2.subsecescalar}. According to Theorem \ref{2.teorAY},
the options for the restricted flow $(\mK,\tau)$ are two: either it is
an equicontinuous minimal flow, or it is Auslander-Yorke chaotic.
Assume that we are in the first case. Then $\mK$ (an almost automorphic
extension of the base, since it is minimal) is necessarily
an equicontinuous copy of the base: see e.g.~\cite[Theorem A or Part II]{shyi4}.
Therefore, in this case, the base is necessarily equicontinuous.
\par
2.~In particular, the restricted flow to such a set $\mK$ is
Auslander-Yorke chaotic whenever either the base flow is not equicontinuous
or $\mK$ is not a copy of the base.
\end{notas}
The omega limit set $\mO_\tau(\w,x)$ of a point $(\w,x)\in\W\times\R$
and the support $\Supp m$ of a $\tau$-invariant measure $m$ are
defined in Subsection \ref{2.subsecbasic},
and the notion of $\tau$-equilibrium appears in Subsection \ref{2.subsecskew}.
The properties required in the function $\eta$ appearing in the next
statement are satisfied, for instance, for the upper and lower cover of
a $\tau$-invariant compact set $\mK\subset\W\times\R$.
%such that the restricted flow $(\mK,\tau)$ is linear.
\begin{teor}\label{2.teorAYsec3}
Let $(\W,\sigma)$ be a minimal flow on a compact metric space, and
let $(\W\times\R,\tau)$ be the flow induced by the family
of equations \eqref{2.ecescalar}, where $f\colon\W\times\R$ is
jointly continuous and locally Lipschitz with respect to the state variable $x\in\R$.
\begin{itemize}
\item[\rm(i)] Let $\mM\subset\W\times\R$ be a minimal set. Then, either
the base flow $(\W,\sigma)$ is equicontinuous and $\mM$ is a copy of the base,
or the restricted flow $(\mM,\tau)$ is Auslander-Yorke chaotic.
\end{itemize}
Let us fix $m\in\mmerg(\W,\sigma)$ and let $\eta\colon\W\to\R$ be a bounded
Borel function such that
\begin{itemize}
\item[-] there exists a $\sigma$-invariant subset $\W_\eta$ with
$m(\W_\eta)=1$ such that $x(t,\w,\eta(\w))=\eta(\wt)$ for all $t\in\R$ and $\w\in\W_\eta$,
\item[-] there exists a continuity point $\w_\eta$ of $\eta$,
\item[-] the graph of $\eta$ is contained in a $\tau$-invariant compact set
$\mA\subset\W\times\R$.
\end{itemize}
Then,
\begin{itemize}
\item[\rm(ii)] $\displaystyle{\int_{\mA}h(\w,x)\,d\mu_\eta:=
\int_\W h(\w,\eta(\w))\,dm}$ for
$h\in C(\mA,\R)$ defines a regular Borel $\tau$-ergodic measure
$\mu_\eta$ concentrated on $\mA$.
\item[\rm(iii)] Let us define $\mS_\eta:=\Supp\mu_\eta$. Then, there exists
$\W_*\subseteq\W_\eta$ with $m(\W_*)=1$ such that $(\bw,\eta(\bw))\in\mS_\eta$
and
\begin{equation}\label{2.sop}
 \mS_\eta=\mO_\tau(\bw,\eta(\bw))
\end{equation}
for all $\bw\in\W_*$. In particular, $\mS_\eta$ is a
$\tau$-invariant pinched compact set, the flow $(\mS,\tau)$ is transitive,
and the set $\mX_\eta\subseteq\mS_\eta$ of $\tau$-generic points with forward
$\tau$-semiorbit dense in $\mS_\eta$ satisfies $\mu_\eta(\mX_\eta)=1$.
\item[\rm(iv)] Either the base flow $(\W,\sigma)$ is equicontinuous and
$\mS_\eta$ is a copy of the base, or the restricted
flow $(\mS_\eta,\tau)$ is Auslander-Yorke chaotic.
\end{itemize}
\end{teor}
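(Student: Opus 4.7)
The plan is to prove the four items in order, using Theorem \ref{2.teorAY} together with the observations collected in Remarks \ref{2.notasAY} as the main engine, and to produce the pinched invariant measure $\mu_\eta$ in item (ii) via a pushforward construction.

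For item (i), the idea is to apply Theorem \ref{2.teorAY} directly to $(\mM,\tau)$. Since $\mM$ is $\tau$-minimal, it is transitive (every forward semiorbit is dense), and since $\mM$ is compact there exists at least one $\tau$-invariant measure on $\mM$; by minimality the support of any such measure is the whole of $\mM$. The dichotomy of Theorem \ref{2.teorAY} then applies to $(\mM,\tau)$: either it is minimal and equicontinuous, or it is Auslander-Yorke chaotic. In the first alternative, Remark \ref{2.notasAY}.1 (based on \cite{shyi4}) forces $\mM$ to be an equicontinuous copy of the base, which also forces the base itself to be equicontinuous.

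For item (ii), I define $\mu_\eta$ by the prescribed integral. The right-hand side is a positive linear functional on $C(\mA,\R)$ bounded by $\|h\|_\infty$, so Riesz's theorem yields a regular Borel probability measure on $\mA$. Concentration on $\mA$ is automatic since the graph of $\eta$ is contained in $\mA$. For $\tau$-invariance I fix $h\in C(\mA,\R)$ and $t\in\R$ and compute $\int h\circ\tau_t\,d\mu_\eta=\int_\W h(\wt,x(t,\w,\eta(\w)))\,dm$; on $\W_\eta$ (which has $m$-measure $1$) the inner argument equals $h(\wt,\eta(\wt))$, and then $\sigma$-invariance of $m$ yields $\int h\,d\mu_\eta$. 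For ergodicity I take a $\tau$-invariant Borel set $\mB\subseteq\mA$ and pull back to $\W$ through $\mB':=\{\w\in\W\,|\;(\w,\eta(\w))\in\mB\}$; the $\tau$-invariance of $\mB$ combined with $x(t,\w,\eta(\w))=\eta(\wt)$ on $\W_\eta$ makes $\mB'\cap\W_\eta$ a $\sigma$-invariant Borel set, so $m$-ergodicity forces $\mu_\eta(\mB)=m(\mB')\in\{0,1\}$.

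Item (iii) combines three ingredients. First, Birkhoff's ergodic theorem applied to $\mu_\eta$ yields a $\mu_\eta$-full-measure set of $\tau$-generic points, and standard arguments show that the forward semiorbit of any $\mu_\eta$-generic point whose occupation measures converge weakly to $\mu_\eta$ has closure equal to $\mS_\eta=\Supp\mu_\eta$. Transferring via the pushforward, there exists $\W_*\subseteq\W_\eta$ with $m(\W_*)=1$ such that $(\bw,\eta(\bw))$ is $\tau$-generic for $\mu_\eta$ and \eqref{2.sop} holds, giving both transitivity and the assertion $\mu_\eta(\mX_\eta)=1$. Second, the pinched property is obtained from the continuity point $\w_\eta$: for any $y\ne\eta(\w_\eta)$, continuity of $\eta$ at $\w_\eta$ produces a neighborhood of $(\w_\eta,y)$ disjoint from the graph of $\eta$, which has $\mu_\eta$-measure zero, so $(\w_\eta,y)\notin\mS_\eta$; conversely, minimality of $(\W,\sigma)$ ensures that every nonempty open set has positive $m$-measure, so continuity of $\eta$ at $\w_\eta$ forces $(\w_\eta,\eta(\w_\eta))\in\mS_\eta$, and hence $(\mS_\eta)_{\w_\eta}=\{\eta(\w_\eta)\}$. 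Third, $\tau$-invariance and compactness of $\mS_\eta$ are automatic for the support of a $\tau$-invariant Borel probability measure.

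Finally, item (iv) reduces to a second application of Theorem \ref{2.teorAY}, now to the transitive flow $(\mS_\eta,\tau)$ with the fact that $\mS_\eta=\Supp\mu_\eta$ for the $\tau$-invariant measure $\mu_\eta$ produced in (ii). The dichotomy again gives either equicontinuous minimality or Auslander-Yorke chaos. In the first alternative $\mS_\eta$ is minimal, hence by Subsection \ref{2.subsecescalar} an almost automorphic extension of the base; being additionally equicontinuous, Remark \ref{2.notasAY}.1 forces it to be a copy of the base and the base to be equicontinuous. The main subtle step, and the one I expect to be the principal obstacle, is the verification in (iii) that the $m$-generic base points landing on the graph of $\eta$ are simultaneously $\tau$-generic for $\mu_\eta$ with orbit closure exactly $\mS_\eta$; this requires carefully combining the Birkhoff theorem on $(\mA,\tau,\mu_\eta)$ with the pushforward identity $\int f\circ\pi\,d\mu_\eta=\int f\,dm$ where $\pi$ is the base projection, and then intersecting with the $\sigma$-generic set on the base to ensure equidistribution of the lifted semiorbits.
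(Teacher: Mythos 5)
Your proposal is correct in substance, but for the core item (iii) it follows a genuinely different route from the paper. The paper's proof is hands-on: it applies Lusin's theorem to get a compact $\mK\subseteq\W_\eta$ on which $\eta$ is continuous, passes to the set $\mK_*$ of its \lq\lq density points", picks a countable family of balls $\mK_{m,k}$ centered at a dense subset of $\mK_*$, uses Birkhoff's theorem on each $\mK_{m,k}$ to build a $\sigma$-invariant full-measure set $\W_*$ of base points returning to every $\mK_{m,k}$ at arbitrarily large times, and then proves the two inclusions of \eqref{2.sop} by explicit approximation (the relation \eqref{2.estaen}) together with a regularity computation for $\mu_\eta$ (the chain \eqref{2.cara}). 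You instead work upstairs: identify $\mu_\eta$ with the pushforward $G_*m$, $G(\w)=(\w,\eta(\w))$, apply Birkhoff on $(\mA,\tau,\mu_\eta)$ plus a countable basis to get that $\mu_\eta$-a.e.~point is generic with omega limit set equal to $\Supp\mu_\eta$, and pull back through $G$. This is shorter and softer; what the paper's construction buys in exchange is an explicit, $\sigma$-invariant $\W_*$ built from concrete return times, which is the form reused verbatim in the proof of Theorem \ref{3.teorSup0-chaosAY}(ii) (where a single $\mK$ works simultaneously for all $\eta_c$, $c\in[0,1]$). Note that your $\W_*$ is in fact also $\sigma$-invariant after intersecting with $\W_\eta$, by the equivariance $G\circ\sigma_t=\tau_t\circ G$ on $\W_\eta$ and the $\tau$-invariance of the good set upstairs. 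Your treatments of (i), (ii) and (iv) match the paper's (the paper leaves (ii) as an exercise, which you solve correctly, including the ergodicity via the $\sigma$-invariance of $G^{-1}(\mB)\cap\W_\eta$).

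One slip worth flagging, though it does not break the argument as you deploy it: it is not true that \emph{any} $\mu_\eta$-generic point, even one whose occupation measures converge weakly to $\mu_\eta$, has forward orbit closure \emph{equal} to $\mS_\eta$; weak convergence of occupation measures only yields the inclusion $\mS_\eta\subseteq\mO_\tau(\w,x)$ (consider $x'=-x$ on $\R$ and the point $x_0=1$, generic for $\delta_0$ with orbit closure $[0,1]$). The reverse inclusion requires the point itself to lie in $\Supp\mu_\eta$, which is closed and $\tau$-invariant and hence traps the whole orbit. Since $\mu_\eta(\Supp\mu_\eta)=1$, this holds for $\mu_\eta$-a.e.~point, which is exactly the set you push back through $G$; so you should state the claim for $\mu_\eta$-a.e.~point rather than for every generic point, and the rest of your transfer argument goes through unchanged.
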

\begin{proof}
(i) According to Corollary~\ref{2.coroAY}, either $(\mM,\tau)$ is Auslander-Yorke
chaotic or it is equicontinuous. As explained in Remark~\ref{2.notasAY}.1, in the
second situation, $\mM$ is a copy of the base and hence the base flow is equicontinuous.
\smallskip\par
(ii) This property is a classical result on measure theory, and an easy and nice exercise
for the interested reader.
\smallskip\par
(iii) The $\tau$-invariance and compactness of $\mS_\eta$ are general properties
which follow from the $\tau$-invariance of $\mu_\eta$ and the compactness of $\mA$:
see Subsection \ref{2.subsecbasic}. Let us check that $\mS_\eta$ is a pinched set
which contains a dense forward $\tau$-semiorbit.
\par
Lusin's theorem and the regularity of $m$ provide a compact set
$\mK\subseteq\W_\eta$
with $m(\mK)>0$ such that the restriction
$\eta\colon\mK\to\R$ is continuous. Let us define the set
\[
 \mK_*:=\{\w\in\mK\,|\;m(\mB_\W(\w,\delta)\cap\mK)>0\;\text{for all $\delta>0$}\}\,,
\]
which is obviously closed and hence compact.
Our first goal is checking that $m(\mK-\mK_*)=0$. Since $m$ is regular, it
is enough to prove that $m(\mC)=0$ for any compact subset $\mC\subseteq\mK-\mK_*$.
For any $\w_0\in\mC$ there exists $\delta_{\w_0}>0$ such that
$m(\mB_\W(\w_0,\delta_{\w_0})\cap\mK)=0$. The compactness of $\mC$ provides
a finite number of points $\w_1,\ldots,\w_m$ such that
$\mC\subseteq\mB_\W(\w_1,\delta_{\w_1})\cup\cdots\cup\mB_\W(\w_m,\delta_{\w_m})$.
Hence, $\mC=\mC\cap\mK\subseteq(\mB_\W(\w_1,\delta_{\w_1})\cap\mK)\cup\cdots\cup
(\mB_\W(\w_m,\delta_{\w_m})\cap\mK)$, which ensures that $m(\mC)=0$, as asserted.
Consequently, $m(\mB_\W(\w,\delta)\cap\mK_*)=m(\mB_\W(\w,\delta)\cap\mK)>0$
for any $\w\in\mK_*$ and $\delta>0$.
\par
The compact set $\mK_*$ is separable, so that we can find a countable and dense
subset $\mD:=\{\w_m\,|\;m\ge 1\}\subseteq\mK_*$. We call $\mK_{m,k}:=\mB_\W(\w_m,1/k)\cap\mK_*$
and observe that $m(\mK_{m,k})>0$ for all $m,k\ge 1$, since $\w_m\in\mK_*$.
Therefore, Birkhoff's ergodic theorem provides a $\sigma$-invariant subset $\W_{m,k}\subseteq\W_\eta$ with $m(\W_{m,k})=1$ such that for any $\w\in\W_{m,k}$ there
exists a sequence $(t_n)\uparrow\infty$
with $\wt_n\in\mK_{m,k}$ for any $n\ge 1$.
\par
The set $\W_*:=\bigcap_{m\ge 1,\,k\ge 1}\W_{m,k}\subseteq\W_\eta$
is $\sigma$-invariant satisfies
$m(\W_*)=1$. We fix $\bw\in\W_*$ and will check that \eqref{2.sop} holds and that
$(\bw,\eta(\bw))\in\mS_\eta$.
Before that, observe that these properties ensure that
\begin{itemize}
\item[-] the restricted flow
$(\mS_\eta,\tau)$ is point transitive (and hence topologically transitive,
see Remark \ref{2.notasuno}.3), since the forward $\tau$-semiorbit of
$(\bw,\eta(\bw))$ is dense in $\mS_\eta$;
\item[-] the set $\mS_\eta$ is pinched,
since its section over the continuity point $\w_\eta$ of the map
$\eta$ reduces to the singleton $\{\eta(\w_\eta)\}$;
\item[-] the set of points $\{(\bw,\eta(\bw))\,
|\;\bw\in\W_*\}$ has full measure $\mu_\eta$. Hence
$\mu_\eta(\mX_\eta)=1$ for the set $\mX_\eta$ of statement (iii),
since the set of generic points for $(\mS_\eta,\tau)$
has complete measure (see Subsection \ref{2.subsecbasic}).
\end{itemize}
That is, the proof of (iii) will be complete once checked these two assertions.
\par
We begin by observing that the definitions of $\mK_*$ and $\W_*$ provide $t>0$ such that
$\bw{\cdot}t\in\mK_*$. Since $\bw\in\W_*\subseteq\W_\eta$, we have $\tau(t,\bw,\eta(\bw))=(\bw{\cdot}t,\eta(\bw{\cdot}t))$.
Therefore, $\mO_\tau(\bw,\eta(\bw))=\mO_\tau(\bw{\cdot}t,\eta(\bw{\cdot}t))$,
and $(\bw,\eta(\bw))\in\mS_\eta$ if and only if
$(\bw{\cdot}t,\eta(\bw{\cdot}t))\in\mS_\eta$.
Consequently, it is enough to prove the two pervious assertions for
$\bw\in\mK_*$, which we assume from now on.
\par
We first prove that
\begin{equation}\label{2.estaen}
 (\ww{\cdot}t,\eta(\ww{\cdot}t))\in\mO_\tau(\bw,\eta(\bw))
\end{equation}
for all $\ww\in\mK_*$ and $t\in\R$. To this end, we take $\w_m\in\mD$ and
$\ep>0$, and look for $k>1/\ep$ such that, if $\w\in\mB_\W(\w_m,1/k)\cap\mK_*$,
then $|\eta(\w_m)-\eta(\w)|<\ep$. We also look for $(t_n)\uparrow\infty$ such
that $\bw{\cdot}t_n\in\mK_{m,k}\subseteq\mB_\W(\w_m,1/k)$. Thus,
$\dist_\W(\w_m,\bw{\cdot}t_n)<1/k<\ep$ and $|\eta(\w_m)-\eta(\bw{\cdot}t_n)|<\ep$,
which proves \eqref{2.estaen} for $\ww=\w_m\in\mD$ and $t=0$. The property
for all $\ww\in\mK_*$ and $t=0$ follows from the density of $\mD$, the continuity
of $\eta\colon\mK_*\to\R$, and the closed character of the right set in
\eqref{2.estaen}. Once this is established, we combine $\mK_*\subset\W_\eta$
with the $\tau$-invariance of the omega limit in order to deduce
\eqref{2.estaen} for all $\ww\in\mK_*$ and $t\in\R$.
\par
We define $\mK_\infty:=
\bigcup_{t\in\R}\sigma_t(\mK_*)$. The definition of $\W_*$ ensures that
$\W_*\subseteq\mK_\infty$, and hence $m(\mK_\infty)\ge m(\W_*)=1$.
Note that \eqref{2.estaen} ensures that
$(\w,\eta(\w))\in\mO_\tau(\bw,\eta(\bw))$ whenever
$\w\in\mK_\infty$. This property and the regularity of $\mu_\eta$ yield
\begin{equation}\label{2.cara}
\begin{split}
 &\mu_\eta(\mO_\tau(\bw,\eta(\bw)))\\
 &\qquad=\inf\left\{\int_\mA f(\w,x)\,d\mu_\eta\,|\;
 f\in C(\mA,[0,1]) \text{ with } f|_{\mO_\tau(\bw,\eta(\bar{\w}))}\equiv 1\right\}\\
 &\qquad=\inf\left\{\int_\mA f(\w,\eta(\w))\,dm\,|\;
 f\in C(\mA,[0,1]) \text{ with } f|_{\mO_\tau(\bw,\eta(\bar{\w}))}\equiv 1\right\}\\
 &\qquad\ge\int_\W\chi|_{_{\mK_\infty}}(\w)\,dm=1\,.
\end{split}
\end{equation}
(As usual,
$\chi|_{_{\mB}}$ is the characteristic function of the set $\mB$.)
Hence $\mu_\eta(\mO_\tau(\bw,\eta(\bar(\w)))=1$,
which ensures that $\mS_\eta\subseteq\mO_\tau(\bw,\eta(\bw))$.
\par
Let us now check that
$\mO_\tau(\bw,\eta(\bw))\subseteq\mS_\eta$.
We take $(\ww,\wit x)\in\mO_\tau(\bw,\eta(\bw))$ and
an open neighborhood $\mU\subset\W\times\R$ of $(\ww,\wit x)$, and
will prove that $\mu_\eta(\mU\cap\mA)>0$. Let us take
$\bar t>0$ such that $(\bw{\cdot}\bar t,\eta(\bw{\cdot}\bar t))\in\mU$.
Then $(\bw,\eta(\bw))\in\mV:=\tau_{-\bar t}(\mU)$, which
combined with the continuity of $\mK_*\to\W\times\R\,,\;\w\mapsto(\w,\eta(\w))$
ensures that there exists $\delta>0$ such that $(\w,\eta(\w))\in\mV$
whenever $\w\in\mB(\bw,\delta)\cap\mK_*$. Since
$m(\mB(\bw,\delta)\cap\mK_*)>0$, we conclude as in \eqref{2.cara} that
$\mu_\eta(\mV\cap\mA)>0$,
which combined with the $\tau$-invariance of the measure
ensures that $\mu_\eta(\mU\cap\mA)=\mu_\eta(\mV\cap\mA)>0$.
An easy contradiction argument shows that
$(\ww,\wit x)\in\mS_\mu$, so that \eqref{2.sop} is proved for
the initially chosen point $\bw\in\mK_*$.
In turn, \eqref{2.sop} combined with \eqref{2.estaen}
for $\ww=\bw$ and $t=0$ ensures that $(\bw,\eta(\bw))\in\mS_\eta$.
This completes the proof of the two assertions, and that of (iii).
\smallskip\par
(iv) According to Theorem~\ref{2.teorAY}, either $(\mS_\eta,\tau)$ is Auslander-Yorke
chaotic or it is equicontinuous and minimal. Remark~\ref{2.notasAY}.1
completes the proof of (iv).
\end{proof}
%%%%%%%%%%%%%%%%%%%%%%%%%%%%%%%%%%%%%%%%%%%%%%%%%%%%%%%%%%%%%%%%%%%%%%%%%%
%%%%%%%%%%%%%%%%%%%%%%%%%%%%%%%%%%%%%%%%%%%%%%%%%%%%%%%%%%%%%%%%%%%%%%%%%%
%%%%%%%%%%%%%%%%%%%%%%%%%%%%%%%%%%%%%%%%%%%%%%%%%%%%%%%%%%%%%%%%%%%%%%%%%%
%%%%%%%%%%%%%%%%%%%%%%%%%%%%%%%%%%%%%%%%%%%%%%%%%%%%%%%%%%%%%%%%%%%%%%%%%%
\section{Dynamics for nonhomogeneous linear dissipative equations}\label{3.sec}
Let $(\W,\sigma)$ be a minimal flow on a compact metric space. (This minimality
is an important requisite throughout the whole section.)
Let $a,b\colon\W\to\R$ be continuous functions.
Let us consider the family of scalar nonautonomous equations
\begin{equation}\label{3.familia}
 x'=a(\wt)\,x+b(\wt)+g(\wt,x)\,,\qquad \w\in\W
\end{equation}
with nonhomogeneous linear part, under the following conditions on
the function $g\colon\W\times\R\to\R$ (although
not all of them will be always in force):
\begin{itemize}
\item[\hypertarget{g1}{{\bf g1}}] There exists the partial derivative $g_x$,
and the functions $g,\,g_x\colon\W\times\R\to\R$ are continuous.
%\item[\hypertarget{g1t}{{\bf\~{g1}}}] There exist the partial
%derivatives $g_x$ and
%$g_{xx}$, and the functions $g,\,g_x,\,g_{xx}
%\colon$ $\W\times\R\to\R$ are continuous.
\item[\hypertarget{g2}{{\bf g2}}] There exist real numbers $r_1\le r_2$
such that: $g(\w,x)=0$ if $r_1\le x\le r_2$,
$g(\w,x)>0$ if $x<r_1$ and $g(\w,x)<0$ if $x>r_2$; and
$g_x(\w,r_1)=0$ for all $\w\in\W$ if $r_1=r_2$.
\item[\hypertarget{g3}{{\bf g3}}] $\lim_{x\to\pm\infty}(g(\w,x)/x)=-\infty$ uniformly on $\W$.
\item[\hypertarget{g4}{{\bf g4}}] $g_x(\w,x)\le 0$ whenever $x\notin[r_1,r_2]$.
\item[\hypertarget{g4t}{{\bf\~{g4}}}] $g_x(\w,x)<0$ whenever $x\notin[r_1,r_2]$.
\end{itemize}
\par
The family \eqref{3.familia} is said to be {\em linear dissipative\/} if
$r_1<r_2$, and {\em purely dissipative\/} if $r_1=r_2$.
Theorem \ref{3.teorexisteA}
will justify the use of the term {\em dissipative\/} in both cases. In this
paper, we are more interested in the linear dissipative case, where
we can detect Li-Yorke chaos and Auslander-Yorke chaos. But it is quite easy
to complete our
analysis in order to include the purely dissipative case, just using at
a certain point (in the proof of Theorem \ref{3.teorSup0-pinched}) one
result of \cite{obsa8}.
\par
As explained in Subsection \ref{2.subsecskew}, the family \eqref{3.familia}
induces a local continuous flow $(\W\times\R,\tau)$, given by
\[
 \tau\colon\mU\subseteq\R\times\W\times\R\to\W\times\R\,,\quad
 (t,\w,x_0)\mapsto (\wt,x(t,\w,x_0))\,,
\]
where $t\mapsto x(t,\w,x_0)$ is the maximal solution of \eqref{3.familia}$_\w$
with $x(0,\w,x_0)=x_0$. In addition, the map $x_0\mapsto x(t,\w,x_0)$ is $C^1$
if \hyperlink{g1}{\bf g1} holds.
%Note also that the flow is monotone, since the equation is scalar.
\par
The associated family of homogeneous linear equations
\begin{equation}\label{3.eclineal}
 x'=a(\wt)\,x\,,
\end{equation}
for $\w\in\W$, will play a fundamental role in the proofs of the results.
Let us denote $x_l(t,\w,x_0):=x_0\exp\big(\int_0^ta(\ws)\,ds\big)$,
and let $(\W\times\R,\tau_l)$ be the associated linear flow, so that
$\tau_l(t,\w,x)=(\wt,x_l(t,\w,x_0))$.
\par
We will begin this section by some general results which require neither
the assumption \hyperlink{g4}{\bf g4} on $g$ nor any
condition on the Sacker and Sell spectrum of the linear family \eqref{3.eclineal}.
More precisely,
we establish the existence of global attractor
$\mA$, in Theorem \ref{3.teorexisteA}, and analyze two minimal sets (which may
coincide) determined by the upper and lower covers of $\mA$, in
Theorem~\ref{3.teortapas}. Then we show, in Theorem~\ref{3.teortodos-},
that if any $\tau$-minimal set is uniformly exponentially stable at $+\infty$,
then there is just one of these sets, which coincides with the global attractor.
\par
The condition \hyperlink{g4}{\bf g4} and the assumptions on $\Sigma_a$ will
hence not be in force
until Subsections \ref{3.subsec-} and \ref{3.subsec0}, where we obtain
a much more accurate description of the global dynamics.
\begin{notas}\label{3.notalower}
We will repeatedly use the next properties.
\par
1.~Let us choose
$\w\in\W$ and assume that two maps $t\mapsto\alpha(\wt)$ and
$t\mapsto\beta(\wt)$ are globally defined solutions of the
equation \eqref{3.familia}$_\w$ with
$\alpha(\wt)\le\beta(\wt)$ for any $t\in\R$.
Assume also that $g$ satisfies \hyperlink{g1}{\bf g1} and
\hyperlink{g2}{\bf g2}, and that
$\alpha(\wt)\le r_2$ and $\beta(\wt)\ge r_1$ for all $t\in\R$. Then, the
map $t\mapsto\beta(\wt)-\alpha(\wt)$ is a nonnegative {\em lower solution}
of the linear equation \eqref{3.eclineal}$_\w$
(that is, its derivative satisfies the differential inequality
$x'\le a(\wt)\,x$).
This assertion follows from property \hyperlink{g2}{\bf g2}, which ensures that
$g(\wt,\beta(\wt))\le 0$ and $g(\wt,\alpha(\wt))\ge 0$. A standard comparison argument
shows that, in this case,
$\beta(\wt)-\alpha(\wt)\ge(\beta(\w)-\alpha(\w))\exp\big(\int_0^t a(\ws)\,ds\big)$ for
$t\le 0$ and $\beta(\wt)-\alpha(\wt)\le(\beta(\w)-\alpha(\w))\exp\big(\int_0^t a(\ws)\,ds\big)$
for $t\ge 0$.
In particular, if any point $\w\in\W$ satisfies the initial assumption, then the map
$\W\to\R\,,\;\w\mapsto\beta(\w)-\alpha(\w)$ is a $\tau_l$-subequilibrium. We
referred to this type of relation between lower (or upper) solutions and
subequilibra (or superequilibria) in Subsection \ref{2.subsecskew}.
\par
2.~Note also that a similar result holds for $t\mapsto c(\beta(\wt)-\alpha(\wt))$ if $c>0$.
\par
3.~If, in addition, $g$ satisfies \hyperlink{g4}{\bf g4} and $c>0$, then
$t\mapsto c\,(\beta(\wt)-\alpha(\wt))$ is a nonnegative lower solution of
$x'=a(\wt)\,x$ independently of the area where their graph is contained,
since $c\,(g(\wt,\beta(\wt))-g(\wt,\alpha(\wt))\le 0$.
\end{notas}
By repeating the arguments leading to \cite[Theorem 16]{calo}
(see also \cite[Section 1.2]{chlr}),
one proves the following fundamental result:
\begin{teor}\label{3.teorexisteA}
Assume that $g$ satisfies \hyperlink{g1}{\bf g1} and \hyperlink{g3}{\bf g3},
and let $(\W\times\R,\tau)$
be the flow induced by the family \eqref{3.familia}. Then,
\begin{itemize}
\item[\rm(i)] the flow $\tau$ is bounded dissipative and admits a global attractor
\[
 \mA=\bigcup_{\w\in\W}\big(\{\w\}\times[\alfaa(\w),\betaa(\w)]\big)\,.
\]
In particular, any forward $\tau$-semiorbit is
globally defined and bounded. In addition,
$\alfaa\colon\W\to\R$ and $\betaa\colon\W\to\R$ are respectively lower and
upper semicontinuous $\tau$-equilibria; and the sets of continuity points for the
functions $\alfaa$ and $\betaa$ are residual and $\sigma$-invariant.
\item[\rm(ii)] In addition, these functions can be obtained as the limits
\[
\begin{split}
%ERRATA CORREGIDA
 \alfaa(\w)&=\lim_{t\to\infty}x(t,\w{\cdot}(-t),-\rho_0)\,,\\
 \betaa(\w)&=\lim_{t\to\infty}x(t,\w{\cdot}(-t),\rho_0)\,,
\end{split}
\]
where the constant $\rho_0$ is large enough to guarantee that
$a(\w)\,x+b(\w)+g(\w,x)>0$ whenever $x\le-\rho_0$ and $a(\w)\,x+b(\w)+g(\w,x)<0$ whenever $x\ge\rho_0$.
\item[\rm(iii)] $\mA$ is the union of the all the $\tau$-orbits which are globally defined and bounded.
\end{itemize}
\end{teor}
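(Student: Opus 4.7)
The plan is to follow the standard pullback-attractor construction, leaning heavily on the fiber-monotonicity of $\tau$ and on the dissipativity condition \hyperlink{g3}{\bf g3}. I would organize the argument in four steps.

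First, I would build an absorbing strip for $\tau$. Using \hyperlink{g3}{\bf g3} (the uniform limit $g(\omega,x)/x\to-\infty$) together with the boundedness of $a$ and $b$ on the compact space $\W$, I can choose $\rho_0>0$ large enough so that $a(\w)\,x+b(\w)+g(\w,x)<0$ whenever $x\geq\rho_0$ and the reverse inequality holds whenever $x\leq-\rho_0$. This means that the closed strip $\W\times[-\rho_0,\rho_0]$ is positively invariant and that every forward trajectory entering $\{x>\rho_0\}$ (resp.~$\{x<-\rho_0\}$) is strictly decreasing (resp.~increasing) until it enters the strip, where it stays. A standard comparison argument with the constant solutions $\pm\rho_0$ combined with \hyperlink{g3}{\bf g3} yields that every forward semiorbit is globally defined, eventually trapped in the strip, and hence bounded; this gives bounded dissipativity.

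Next, I would define the pullback limits. For each fixed $\w\in\W$, consider $\varphi_t(\w):=x(t,\w{\cdot}(-t),-\rho_0)$ and $\psi_t(\w):=x(t,\w{\cdot}(-t),\rho_0)$. Each is continuous in $\w$ (by continuity of $\tau$). I would check by the cocycle identity and fiber-monotonicity that $\varphi_t(\w)$ is monotonically increasing in $t$ and $\psi_t(\w)$ monotonically decreasing: indeed, for $s>0$,
\[
\varphi_{t+s}(\w)=x(t,\w{\cdot}(-t),x(s,\w{\cdot}(-t-s),-\rho_0))>x(t,\w{\cdot}(-t),-\rho_0)=\varphi_t(\w),
\]
because $x(s,\w{\cdot}(-t-s),-\rho_0)>-\rho_0$ by the sign of the vector field, and fiber-monotonicity preserves the strict inequality. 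Both sequences are uniformly bounded in $[-\rho_0,\rho_0]$, so the pointwise limits $\alfaa(\w)$ and $\betaa(\w)$ exist and take values in that interval; since $\alfaa$ is a pointwise monotone nondecreasing limit of continuous functions, it is lower semicontinuous, and analogously $\betaa$ is upper semicontinuous. Clearly $\alfaa\le\betaa$.

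Third, I would verify that $\alfaa,\betaa$ are $\tau$-equilibria, that $\mA:=\bigcup_\w\{\w\}\times[\alfaa(\w),\betaa(\w)]$ is the global attractor, and that the continuity-point sets are residual and $\sigma$-invariant. The equilibrium property follows from the cocycle property and continuity of $\tau$: $x(s,\w,\alfaa(\w))=\lim_{t\to\infty}x(s+t,\w{\cdot}(-t),-\rho_0)=\alfaa(\ws)$, and similarly for $\betaa$. Hence $\mA$ is $\tau$-invariant, and its compactness follows from the semicontinuity of its bounding functions. For attraction, given any bounded set $\mB\subset\W\times[-M,M]$ with $M\ge\rho_0$, the trapping property ensures all forward orbits enter $\W\times[-\rho_0,\rho_0]$ in a time uniform in $\mB$; after that, fiber-monotonicity gives
\[
\varphi_t(\wt)\le x(t,\w,x_0)\le\psi_t(\wt)\qquad\text{for }(\w,x_0)\in\tau_{t_0}(\mB),\,t\ge 0,
\]
and the uniform pullback convergence of $\varphi_t,\psi_t$ on the compact base $\W$ (obtained from Dini's theorem applied to the monotone convergence of the continuous $\varphi_t,\psi_t$ to the semicontinuous $\alfaa,\betaa$, after checking uniform convergence holds modulo $\ep$ or passing through a standard argument on the graphs) forces $\dist(\tau_t(\mB),\mA)\to 0$. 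Semicontinuity of $\alfaa,\betaa$ on the complete space $\W$ yields residual sets of continuity points by Baire category, and their $\sigma$-invariance follows from the equilibrium identity $\alfaa(\ws)=x(s,\w,\alfaa(\w))$ together with the continuity of $\tau$.

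Finally, for (iii), any globally defined and bounded orbit $\{\tau(t,\w_0,x_0):t\in\R\}$ is a bounded $\tau$-invariant set, so it is attracted to $\mA$; by invariance it must already lie inside $\mA$. The reverse inclusion is immediate, since every point of $\mA$ has a globally defined bounded orbit by $\tau$-invariance of $\mA$. The only point likely to require care is the uniform pullback convergence needed for attraction, since $\alfaa,\betaa$ are only semicontinuous; the cleanest route is to show directly, using the trapping region and fiber-monotonicity, that for every $\ep>0$ one has $\varphi_t(\w)\ge\alfaa(\w)-\ep$ and $\psi_t(\w)\le\betaa(\w)+\ep$ uniformly in $\w$ once $t$ is large, which together with the bracketing inequalities completes the proof.
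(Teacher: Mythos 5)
Your construction follows the same pullback route as the sources to which the paper itself defers for this statement (it gives no proof, citing \cite[Theorem 16]{calo} and \cite[Section 1.2]{chlr}): the absorbing strip $\W\times[-\rho_0,\rho_0]$ obtained from \hyperlink{g3}{\bf g3}, the monotone pullback functions $\varphi_t,\psi_t$, their semicontinuous limits $\alfaa,\betaa$, the equilibrium identities via the cocycle property, and the characterization in (iii) are all correct as written. The genuine problem is the attraction step. Dini's theorem does not apply, since it requires the pointwise limit to be continuous, while $\alfaa$ and $\betaa$ are in general only semicontinuous; and your proposed substitute --- that for every $\ep>0$ one has $\varphi_t(\w)\ge\alfaa(\w)-\ep$ uniformly in $\w$ for $t$ large --- is not a weaker statement but \emph{exactly} uniform convergence of $\varphi_t$ to $\alfaa$, because $\varphi_t\le\alfaa$ pointwise. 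A uniform limit of continuous functions is continuous, so your route could only succeed when $\alfaa$ (resp.\ $\betaa$) is continuous; it fails precisely in the pinched situations that are the paper's main concern, where $\W_c\varsubsetneq\W$ forces $\alfaa$ or $\betaa$ to be discontinuous (see Theorem \ref{3.teorSup0-pinched}(v) and the chaotic regimes of Theorems \ref{3.teorSup0-chaosLY} and \ref{3.teorSup0-chaosAY}).

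The repair is standard and uses your own bracketing inequality, but at \emph{fixed} $s$ combined with compactness, exploiting the fact that the Hausdorff semidistance to $\mA$ takes the infimum over all of $\mA$, not only over the fiber above the same base point, so no fiberwise uniform estimate is needed. Suppose attraction fails for a bounded set $\mB$: there exist $\ep>0$, $(t_n)\uparrow\infty$ and $(\w_n,x_n)\in\mB$ such that the point $z_n:=\tau(t_n,\w_n,x_n)$ is at distance at least $\ep$ from $\mA$. By the uniform absorption time, $z_n$ eventually lies in the compact strip, so a subsequence converges to some $(\w_*,x_*)$. For each fixed $s>0$ and all large $n$, the point $\tau(t_n-s,\w_n,x_n)$ already lies in the strip, whence $\varphi_s(\w_n{\cdot}t_n)\le x(t_n,\w_n,x_n)\le \psi_s(\w_n{\cdot}t_n)$; letting $n\to\infty$ and using the continuity of $\varphi_s$ and $\psi_s$ for this fixed $s$ gives $\varphi_s(\w_*)\le x_*\le\psi_s(\w_*)$, and letting $s\to\infty$ gives $(\w_*,x_*)\in\mA$, a contradiction. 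With this replacement the argument is complete; the remainder of your proposal (semicontinuity of the limits, the equilibrium property, invariance and compactness of $\mA$, the Baire-residual continuity sets and their $\sigma$-invariance, and point (iii)) is sound.
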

In the description of the global dynamics there are two $\tau$-minimal
subsets (which may coincide) easily defined from $\mA$ which play a
fundamental role, and which we describe in the next result.
As recalled in Subsection \ref{2.subsecescalar}, given any $\tau$-minimal
set $\mM$:
there exists a residual $\sigma$-invariant subset $\W_\mM\subseteq\W$ at
whose points the functions $\alfam$ and $\betam$ appearing in the description
\eqref{2.M} of $\mM$ are continuous and take the same value, so
that in particular $\mM$ is an almost automorphic extension of the base; and
$\mM$ is a copy of the base if and only if $\alfam$ and $\betam$
are continuous and coincide everywhere.
The fiber-order relation between two $\tau$-minimal sets, denoted as $\mM\le\mN$ or
$\mM<\mN$, is also described in Subsection \ref{2.subsecescalar}.
\begin{teor}\label{3.teortapas}
Assume that $g$ satisfies \hyperlink{g1}{\bf g1} and
\hyperlink{g3}{\bf g3}, let $(\W\times\R,\tau)$
be the flow induced by the family \eqref{3.familia},
and let $\mA$, $\alpha_{\mA}$ and
$\betaa$ be provided by Theorem~{\rm\ref{3.teorexisteA}}.
Let $\W_c$ be the residual set of common
continuity points of $\alfaa$ and $\betaa$. Let us take $\w_0\in\W_c$ and define
\[
\begin{split}
 \malfa&:=\text{\rm closure}_{\W\times\R}\{(\w_0{\cdot}t,\alfaa(\w_0{\cdot}t))\,|\;t\in\R\}\,,\\
 \mbeta&:=\text{\rm closure}_{\W\times\R}\{(\w_0{\cdot}t,\betaa(\w_0{\cdot}t))\,|\;t\in\R\}\,.
\end{split}
\]
Then,
\begin{itemize}
\item[\rm(i)] $\malfa$ and $\mbeta$ are $\tau$-minimal sets and, for any $\w\in\W_c$,
the sections $(\malfa)_\w$ and $(\mbeta)_\w$ are respectively given by
the singletons $\{\alfaa(\w)\}$ and $\{\betaa(\w)\}$. In addition, any
$\tau$-minimal set $\mM$ satisfies $\malfa\le\mM\le\mbeta$.
\item[\rm(ii)] $\mA$ is a pinched compact set if and only if there exists
$\w_0\in\W_c$ such that $\alfaa(\w_0)=\betaa(\w_0)$. In this case,
$\W_c=\{\w\in\W\,|\;\alfaa(\w)=\betaa(\w)\}$.
\end{itemize}
\end{teor}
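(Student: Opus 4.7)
The plan is to treat (i) and (ii) in sequence, exploiting the semicontinuity of $\alfaa,\betaa$ from Theorem~\ref{3.teorexisteA} together with the minimality of $(\W,\sigma)$. For (i), I would first note that $\malfa$ is compact (it sits inside the bounded attractor $\mA$) and $\tau$-invariant (being the closure of a $\tau$-orbit, since $\alfaa$ is a $\tau$-equilibrium). To prove minimality, the strategy is to take any $\tau$-minimal subset $\mN\subseteq\malfa$; by minimality of the base, $\mN$ projects onto all of $\W$, so there exists $(\w_0,y)\in\mN$. Writing $(\w_0,y)=\lim_{n\to\infty}(\w_0\pu t_n,\alfaa(\w_0\pu t_n))$ and invoking continuity of $\alfaa$ at $\w_0\in\W_c$ will force $y=\alfaa(\w_0)$, so $(\w_0,\alfaa(\w_0))\in\mN$; since the $\tau$-orbit of this point is dense in $\malfa$ by definition, this yields $\malfa\subseteq\mN$, hence $\mN=\malfa$. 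The argument for $\mbeta$ is symmetric.

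Next, I would identify the sections at continuity points by the same convergence trick: for $\w\in\W_c$ and any $(\w,y)\in\malfa$, a sequence $(\w_0\pu t_n,\alfaa(\w_0\pu t_n))\to(\w,y)$ combined with continuity of $\alfaa$ at $\w$ forces $y=\alfaa(\w)$, so $(\malfa)_\w=\{\alfaa(\w)\}$, and symmetrically $(\mbeta)_\w=\{\betaa(\w)\}$. The ordering $\malfa\le\mM\le\mbeta$ for any $\tau$-minimal $\mM$ will then follow from the fiber-order property recalled in Subsection~\ref{2.subsecescalar}: at a point $\w$ in the (residual, hence nonempty) intersection of $\W_c$ with the common continuity set of the covers of $\mM$, the section $\mM_\w$ is a singleton $\{z\}$ with $\alfaa(\w)\le z\le\betaa(\w)$ because $\mM\subseteq\mA$, and this single comparison propagates to all of $\W$.

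For (ii), the main tool is the semicontinuity squeeze
\[
 \alfaa(\w)\le\liminf_{\w'\to\w}\alfaa(\w')\le\limsup_{\w'\to\w}\betaa(\w')\le\betaa(\w)\,,
\]
valid at every $\w\in\W$. Whenever $\alfaa(\w)=\betaa(\w)$, equality is forced throughout the chain and both functions turn out to be continuous at $\w$; in particular, $\{\w:\alfaa(\w)=\betaa(\w)\}\subseteq\W_c$, and the equivalence between $\mA$ being pinched and the existence of $\w_0\in\W_c$ with $\alfaa(\w_0)=\betaa(\w_0)$ becomes immediate.

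For the reverse inclusion $\W_c\subseteq\{\w:\alfaa(\w)=\betaa(\w)\}$ in the pinched case, the idea is to deduce $\malfa=\mbeta$. Indeed, part~(i) gives $(\malfa)_{\w_0}=\{\alfaa(\w_0)\}=\{\betaa(\w_0)\}=(\mbeta)_{\w_0}$, so these two $\tau$-minimal sets share a point and therefore coincide, since distinct minimal sets are disjoint. Applying part~(i) once more at an arbitrary $\w\in\W_c$ then yields $\{\alfaa(\w)\}=(\malfa)_\w=(\mbeta)_\w=\{\betaa(\w)\}$, completing the characterization. I expect the only delicate step to be the minimality argument of~(i), where the continuity of $\alfaa$ at $\w_0$ must be combined with the projection of a hypothetical minimal subset onto the base in order to pin down its value there; everything after that reduces to semicontinuity bookkeeping and the general structural results of Subsection~\ref{2.subsecescalar}.
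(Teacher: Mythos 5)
Your proposal is correct and takes essentially the same route as the paper: invariance and compactness of the orbit closures, identification of the sections over $\W_c$ via continuity of $\alfaa$ (resp.~$\betaa$) at those points, minimality by pinning down the section of a hypothetical minimal subset at $\w_0$, the ordering from $\mM\subseteq\mA$ plus fiber-ordering, and the semicontinuity squeeze to get $\{\w\,|\;\alfaa(\w)=\betaa(\w)\}\subseteq\W_c$. The only (minor) deviation is the reverse inclusion in (ii), where you deduce $\malfa=\mbeta$ from the shared singleton section and then read off $\alfaa(\w)=\betaa(\w)$ at every $\w\in\W_c$, while the paper argues directly that the equilibria $\alfaa,\betaa$ agree along the dense orbit of the pinch point and passes to the limit at continuity points; both steps are immediate consequences of part~(i) and the minimality of the base flow.
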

\begin{proof}
(i) The $\tau$-invariance $\malfa$ follows from
$(\w_0{\cdot}t,\alfaa(\w_0{\cdot}t))=\tau(t,\w_0,\alfaa(\w_0))$; and $\malfa$
is compact, since $\alfaa$ is a bounded function.
Let us take any $\w\in\W_c$ and $(\w,x)\in\malfa$. Then,
$(\w,x)=\lim_{n\to\infty}(\w_0{\cdot}t_n,\alfaa(\w_0{\cdot}t_n))$
for a sequence $(t_n)$, and the continuity of $\alfaa$ at
$\w$ ensures that $x=\alfaa(\w)$. This is,
$\malfa_{\w}=\{\alfaa(\w)\}$, as asserted. To prove the minimality of $\malfa$,
we take a $\tau$-minimal subset $\mM\subseteq\malfa$, so that $\mM_{\w_0}=\malfa_{\w_0}=
\{\alpha(\w_0)\}$. Hence, the definition of $\malfa$ ensures that $\malfa\subseteq\mM$,
which shows that they coincide. The arguments are analogous for $\mbeta$.
Finally, since any $\tau$-minimal set $\mM$ is contained in $\mA$, we have $\alfaa(\w)\le
x\le \betaa(\w)$ whenever $\w\in\W$ and $(\w,x)\in\mM$.
The last statement in (i) follows easily from here.
\smallskip\par
(ii) Assume that $\mA_{\w_0}$ is a singleton for a certain point $\w_0\in\W$,
so that $\mA_{\w_0}=\{\alfaa(\w_0)\}=\{\betaa(\w_0)\}$, and
take a sequence $(\w_n)$ with limit $\w_0$. Any
subsequence $(\w_k)$ has, in turn, a subsequence $(\w_j)$ such that
there exists $\lim_{j\to\infty}\alfaa(\w_j)=x$. The semicontinuity of
$\alfaa$ and $\betaa$ ensure that $\alfaa(\w_0)\le x\le\betaa(\w_0)$,
and hence $x=\alfaa(\w_0)$. This guarantees that $\alfaa$ is continuous at
$\w_0$. The same argument shows that $\betaa$ is continuous at $\w_0$,
so that $\w_0\in\W_c$. In particular, $\{\w\in\W\,|\;\alfaa(\w)=\betaa(\w)\}
\subseteq\W_c$.
\par
Since $\alfaa$ and $\betaa$ are $\tau$-equilibria, they agree at
$\w_0{\cdot}t$ for all $t\in\R$. Let us now take $\w\in\W_c$ and a sequence
$(t_n)$ with $\lim\w_0{\cdot}t_n=\w$.
Then $\alfaa(\w)=\lim\alfaa(\w_0{\cdot}t_n)=\lim\betaa(\w_0{\cdot}t_n)=\betaa(\w)$.
This shows that $\W_c\subseteq\{\w\in\W\,|\;\alfaa(\w)=\betaa(\w)\}$, and completes
the proof of (ii).
\end{proof}
Corollary \ref{2.coroDE} states that a
$\tau$-minimal set $\mM$ is an exponentially
stable at $+\infty$ copy of the base (the graph of the continuous function
$\alfam=\betam$) if and only if its upper Lyapunov exponent is strictly negative.
We can add some more information for families of equations of the type \eqref{3.familia}:
\begin{teor}\label{3.teortodos-}
Assume that $g$ satisfies \hyperlink{g1}{\bf g1} and
\hyperlink{g3}{\bf g3}, and let $(\W\times\R,\tau)$ be the flow induced by the
family of equations \eqref{3.familia}.
Then, the following assertions are equivalent:
\begin{itemize}
\item[\rm(1)] Any $\tau$-minimal set has strictly negative upper
Lyapunov exponent.
\item[\rm(2)] There exists a unique $\tau$-minimal set whose upper Lyapunov
exponent is strictly negative.
\end{itemize}
Assume that this is the case, let $\mA$, $\alpha_{\mA}$ and $\betaa$ be provided
by Theorem~{\rm \ref{3.teorexisteA}}, and let $\malfa$ and $\mbeta$ be provided by
Theorem~{\rm\ref{3.teortapas}}. Then, the attractor $\mA$ is given for the
unique $\tau$-minimal set $\malfa=\mbeta=\{\alfaa\}=\{\betaa\}$,
and it attracts exponentially any $\tau$-orbit as time increases.
\end{teor}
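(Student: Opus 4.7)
The implication $(2)\Rightarrow(1)$ is immediate: if the unique $\tau$-minimal set has strictly negative upper Lyapunov exponent, then so does every $\tau$-minimal set, since it is the only one. The substance lies in $(1)\Rightarrow(2)$ and the final description. Assume (1). Corollary~\ref{2.coroDE} yields that every $\tau$-minimal set $\mM$ is a uniformly exponentially stable at $+\infty$ copy of the base, so $\mM=\{\eta_{\mM}\}$ with $\eta_{\mM}\colon\W\to\R$ continuous. In particular, the minimal sets $\malfa=\{\alfaa\}$ and $\mbeta=\{\betaa\}$ provided by Theorem~\ref{3.teortapas} have continuous equilibria; since any $\tau$-minimal set $\mM$ satisfies $\malfa\le\mM\le\mbeta$, the target reduces to proving $\alfaa=\betaa$ on all of $\W$.

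The key tool is a basin decomposition of the attractor. For each $\tau$-minimal set $\mM=\{\eta_{\mM}\}$, with its uniform stability neighborhood $U_\delta(\mM):=\{(\w,y)\in\W\!\times\!\R\,:\,|y-\eta_{\mM}(\w)|<\delta\}$ and associated constants $\kappa,\gamma>0$, I define
\[
 B_{\mM}:=\{(\w,x_0)\in\mA\,:\,\lim_{t\to\infty}|x(t,\w,x_0)-\eta_{\mM}(\wt)|=0\}.
\]
Continuity of $\tau$ together with the exponential stability estimate ensures that $B_{\mM}$ is open in $\mA$, and the basins are pairwise disjoint since any forward semiorbit has a unique omega limit. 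Moreover, the family $\{B_{\mM}\}_{\mM}$ covers $\mA$: given $(\w,x_0)\in\mA$, the omega limit $\mO_\tau(\w,x_0)$ is a nonempty compact $\tau$-invariant subset of $\mA$ containing some minimal set $\mM'$; because $\mM'\subseteq\mO_\tau(\w,x_0)$, the forward semiorbit enters $U_\delta(\mM')$ at some time $T$, and thereafter the uniform exponential stability forces $|x(t,\w,x_0)-\eta_{\mM'}(\wt)|\le\kappa\,e^{-\gamma(t-T)}\delta$ for all $t\ge T$, so that $(\w,x_0)\in B_{\mM'}$ and $\mO_\tau(\w,x_0)=\mM'$. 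This isolating step, namely that each omega limit equals and not merely contains a minimal set, is the main technical point.

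Now fix $\w\in\W$. The fiber $\mA_\w=[\alfaa(\w),\betaa(\w)]$ is a closed interval, hence connected, so the partition of $\mA_\w$ by the relatively open sets $\{(B_{\mM})_\w\}_{\mM}$ reduces to a single nonempty member. But every minimal set $\mM=\{\eta_{\mM}\}$ projects onto $\W$, so $(\w,\eta_{\mM}(\w))\in\mA_\w\cap B_{\mM}$ for every $\w$. If two distinct minimal sets $\mM_1\neq\mM_2$ coexisted, a single fiber would meet the two disjoint basins $B_{\mM_1}$ and $B_{\mM_2}$, a contradiction. Hence there is exactly one $\tau$-minimal set, which must coincide with both $\malfa$ and $\mbeta$. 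This forces $\alfaa=\betaa$ on $\W$, so $\mA$ reduces to the graph of this common continuous equilibrium and coincides with the unique minimal set. The exponential attraction of every forward $\tau$-orbit to $\mA$ then follows by combining Theorem~\ref{3.teorexisteA} (which forces every orbit to eventually enter the uniform stability neighborhood of $\mA$) with the exponential decay rate valid inside that neighborhood.
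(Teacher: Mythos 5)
Your proof of the equivalence follows the paper's route almost exactly: the same basin decomposition $\mB_\mM$, the same openness, disjointness and covering facts (your covering step, via the omega limit set and the uniform exponential estimate, is carried out correctly), and a connectedness argument --- you disconnect the fiber $\mA_\w=[\alfaa(\w),\betaa(\w)]$ where the paper disconnects $\W\times\R$; this fiberwise variant is equally valid, and your reading of (2)$\Rightarrow$(1) as trivial matches the paper's.

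The genuine gap is the sentence ``This forces $\alfaa=\betaa$ on $\W$.'' Uniqueness of the $\tau$-minimal set, even when it coincides with $\malfa$ and $\mbeta$ and is a copy of the base, does not by itself force the attractor to collapse onto it: in this very paper, Theorem \ref{3.teorSup0-pinched}(v) and case (1) of Theorem \ref{3.teorSup0-chaosAY}(iii) exhibit a unique minimal set --- in the latter case a copy of the base --- with $\mM\varsubsetneq\mA$ and $\mA$ pinched. From $\malfa=\mbeta$ you only obtain $\alfaa=\betaa$ on $\W_c$ (Theorem \ref{3.teortapas}(ii)), and $\W_c$ may be a proper subset of $\W$. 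What closes the argument, and is what distinguishes the present hyperbolic situation from those pinched ones, is the exponential stability of $\mM$ used in \emph{backward} time, which your write-up never invokes at this step. Concretely: given $(\w,x_0)\in\mA$, its orbit is globally defined and bounded (Theorem \ref{3.teorexisteA}(iii)), so its alpha limit set is a nonempty compact $\tau$-invariant set and therefore contains the unique minimal set $\mM=\{\eta\}$; take $(t_n)\downarrow-\infty$ with $|x(t_n,\w,x_0)-\eta(\w{\cdot}t_n)|\le\delta$ and apply the forward exponential estimate from time $t_n$ over the interval $[t_n,0]$ to get $|x_0-\eta(\w)|\le\kappa\,e^{\gamma t_n}\delta\to 0$, whence $x_0=\eta(\w)$ and $\mA=\mM$. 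This is precisely the mechanism behind the ``in addition'' clause of Proposition \ref{2.propcopia}(i), which the paper states expressly for this use. Once this backward-time argument is inserted, your final exponential-attraction paragraph is correct as written.
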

\begin{proof}
Assume that (1) holds. Recall that the existence of a global attractor
ensures that any solution is defined and bounded on a positive half-line
(see Theorem \ref{3.teorexisteA}(i)), which in turn ensures the
existence of its omega limit set. Recall also that (1) ensures that
any $\tau$-minimal set $\mM$ is a uniformly exponentially stable
at $+\infty$ copy of the base: $\mM=\{\eta\}$
(see Corollary \ref{2.coroDE}).
Given one of these sets, we consider its basin of attraction,
\[
 \mB_\mM:=\{(\w,x_0)\,|\;\lim_{t\to\infty}|x(t,\w,x_0)-\eta(\wt)|=0\}\,.
\]
It is easy to check that $\mB_\mM$ is an open set, and that different $\tau$-minimal sets
give rise to disjoint basins of attraction. It is also easy to check that
every point $(\w,x)$ belongs to the basin of attraction of a $\tau$-minimal set contained
in its omega limit set. Therefore, we can write
\[
 \W\times\R=\bigcup_{\mM\text{ is $\tau$-minimal}}\mB_\mM\,,
\]
which is a disjoint union of open sets. Since $\W\times\R$ is connected, we conclude
that there exists a unique $\tau$-minimal set: (2) holds. The converse is trivial.
\par
Therefore, $\malfa=\mbeta$, and
is a copy of the base. It follows from the definitions of these sets that
that the functions $\alfaa,\betaa\colon\W\to\R$ are continuous and equal,
which obviously ensures that $\malfa=\mbeta=\mA$. The last assertion follows
easily from the hyperbolicity of $\mA$ and the fact that it is contained in
the omega limit set of any $\tau$-orbit. The proof is complete.
\end{proof}
We complete this part of general results with a theorem which
characterizes the set of common continuity points of $\alfaa$ and
$\betaa$ in some cases.
\begin{teor}\label{3.teorWc}
Assume that $g$ satisfies \hyperlink{g1}{\bf g1}, \hyperlink{g2}{\bf g2} and
\hyperlink{g3}{\bf g3},
let $(\W\times\R,\tau)$ be the flow induced by the family \eqref{3.familia},
let $\mA$, $\alfaa$ and $\betaa$ be provided by Theorem~{\rm\ref{3.teorexisteA}},
and let $\W_c$ be the (nonempty) set defined in Theorem~{\rm\ref{3.teortapas}}.
Assume also that
there exists a $\tau$-minimal set $\mM\subseteq\W\times[r_1,r_2]$. Then,
\begin{itemize}
\item[\rm(i)] if there exists $\w_0\in\W$ with $\sup_{t\le 0}\int_0^t a(\w_0{\cdot}s)\,ds=\infty$,
then $\w_0\in\W_c$, $\W_c=\{\w\in\W\,|\;\alfaa(\w)=\betaa(\w)\}$, $\mA$ is pinched,
and $\mM=\malfa=\mbeta$ is the unique $\tau$-minimal set.
\item[\rm(ii)] Let $\alfam$ and $\betam$ be defined by \eqref{2.M}, and
assume that $\mM\subset\W\times[r_1,r_2)$ or
$\mM\subset\W\times[r_1,r_2)$.
If there exists $\w_0\in\W$ with
$\sup_{t\le 0}\int_0^t a(\w_0{\cdot}s)\,ds<\infty$,
then $\alfaa(\w_0)<\betaa(\w_0)$ and $\mM\varsubsetneq\mA$.
\end{itemize}
In particular, if $\mA$ is pinched, and if $\mM:=\malfa=\mbeta$
is contained in either $\W\times[r_1,r_2)$ or in $\W\times(r_1,r_2]$, then
$\W_c=\{\w\in\W\,|\;\alfaa(\w)=\betaa(\w)\}=
\{\w\in\W\,|\; \sup_{t\le 0}\int_0^t a(\ws)\,ds=\infty\}$.
\end{teor}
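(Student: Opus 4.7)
The common ingredient for both parts is that, since $\mM\subseteq\mA$ projects onto $\W$ and lies in $\W\times[r_1,r_2]$, for every $\w\in\W$ there exists $x\in\mM_\w\subseteq[r_1,r_2]$ with $\alfaa(\w)\le x\le\betaa(\w)$, so $\alfaa(\w)\le r_2$ and $\betaa(\w)\ge r_1$ throughout $\W$. Applying Remark~\ref{3.notalower}.1 to the $\tau$-equilibria $\alfaa$ and $\betaa$ yields
\[
 \betaa(\wt)-\alfaa(\wt)\ge\bigl(\betaa(\w)-\alfaa(\w)\bigr)\exp\!\left(\int_0^t a(\ws)\,ds\right)\quad\text{for }\w\in\W,\ t\le 0,
\]
and the left-hand side is bounded on $\W$ by compactness of $\mA$.

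For part~(i), the hypothesis $\sup_{t\le 0}\int_0^t a(\w_0\pu s)\,ds=\infty$, combined with the continuity of $t\mapsto\int_0^t a(\w_0\pu s)\,ds$, yields a sequence $t_n\downarrow-\infty$ along which these integrals diverge to $+\infty$. Evaluating the above inequality at $\w=\w_0$ and $t=t_n$ forces $\alfaa(\w_0)=\betaa(\w_0)$. The argument inside the proof of Theorem~\ref{3.teortapas}(ii) then upgrades this to $\w_0\in\W_c$, which makes $\mA$ pinched and identifies $\W_c=\{\w\in\W\,|\;\alfaa(\w)=\betaa(\w)\}$. For uniqueness of the minimal set, note that any $\tau$-minimal set $\mN\subseteq\mA$ satisfies $\mN_{\w_0}\subseteq\mA_{\w_0}=\{\alfaa(\w_0)\}$; since two minimal sets sharing an orbit point must coincide by minimality, $\mM=\malfa=\mbeta=\mN$.

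For part~(ii), assume first $\mM\subseteq\W\times[r_1,r_2)$. Upper semicontinuity of $\betam$ on the compact base $\W$ gives $\delta:=r_2-\max_{\w\in\W}\betam(\w)>0$, while the hypothesis on $\w_0$ gives $K:=\sup_{t\le 0}\exp\!\left(\int_0^t a(\w_0\pu s)\,ds\right)<\infty$. Fix $\eta>0$ with $\eta K<\delta$ and set $x_0:=\betam(\w_0)+\eta<r_2$. On any interval containing $0$ where $x(t,\w_0,x_0)\in[r_1,r_2]$, the term $g$ vanishes along both $x(t,\w_0,x_0)$ and $\betam(\w_0\pu t)$, so the difference $z(t):=x(t,\w_0,x_0)-\betam(\w_0\pu t)$ satisfies the linear equation $z'=a(\w_0\pu t)\,z$, giving $z(t)=\eta\exp\!\left(\int_0^t a(\w_0\pu s)\,ds\right)$. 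On the maximal backward interval where this regime holds, the estimate $0<z(t)\le\eta K<\delta$ combined with $\betam(\w_0\pu t)\in[r_1,r_2-\delta]$ forces $x(t,\w_0,x_0)\in(r_1,r_2)$ strictly, so the interval must extend to $(-\infty,0]$. The orbit of $(\w_0,x_0)$ is therefore globally defined and bounded, hence contained in $\mA$ by Theorem~\ref{3.teorexisteA}(iii); this yields $\betaa(\w_0)\ge x_0>\betam(\w_0)\ge\alfaa(\w_0)$ and $\mM\varsubsetneq\mA$. The case $\mM\subseteq\W\times(r_1,r_2]$ is symmetric, comparing with $\alfam$ and starting at $x_0:=\alfam(\w_0)-\eta>r_1$.

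Finally, the \emph{in particular} statement follows from (i) and (ii) via Theorem~\ref{3.teortapas}(ii): the pinched hypothesis gives $\W_c=\{\alfaa=\betaa\}$; part~(i) shows $\{\w\,|\;\sup_{t\le 0}\int_0^t a(\ws)\,ds=\infty\}\subseteq\W_c$, while the contrapositive of part~(ii) supplies the reverse inclusion. The main subtlety lies in the bootstrapping in part~(ii): the closed-form expression for $z(t)$ is only valid on the vanishing set of $g$, so one must carry out an open-closed argument on the maximal backward interval where $x(\cdot,\w_0,x_0)$ stays in $[r_1,r_2]$, using the strict bound $\eta K<\delta$ to rule out boundary exit before appealing to Theorem~\ref{3.teorexisteA}(iii).
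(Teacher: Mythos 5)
Your proposal is correct and follows essentially the same route as the paper: part (i) via the lower-solution inequality of Remark~\ref{3.notalower}.1 applied to the equilibria $\alfaa,\betaa$ together with the boundedness of $\mA$ and Theorem~\ref{3.teortapas}(ii), and part (ii) via a backward-bounded solution slightly above $\betam$ built from the bounded linear solutions, identified as lying in $\mA$ through Theorem~\ref{3.teorexisteA}(iii). The only (harmless) stylistic difference is in (ii): the paper writes the comparison solution in closed form as $z(t):=\betam(\w_0{\cdot}t)+\ep\,x_l(t,\w_0,1)$ for all $t\le 0$ and invokes uniqueness once, whereas you start from the initial value $x_0=\betam(\w_0)+\eta$ and recover the same formula by an open-closed continuation argument on the maximal backward interval — the two are equivalent, and your strict bound $\eta K<\delta$ plays exactly the role of the paper's choice of $\ep$.
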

\begin{proof}
(i) It is enough to prove that $\alfaa(\w_0)=\betaa(\w_0)$: if so, $\mA$ is pinched,
and hence Theorem \ref{3.teortapas}(ii)
proves the remaining assertion. The hypothesis $\mM\subseteq\W\times[r_1,r_2]$
guarantees the that the conditions of  Remark \ref{3.notalower}.1 are fulfilled,
and hence
$\betaa(\w_0{\cdot}t)-\alfaa(\w_0{\cdot}t)\ge(\betaa(\w_0)-\alfaa(\w_0))
\exp\big(\int_0^t a(\w_0{\cdot}s)\,ds\big)$ for
$t\le 0$ (see Remark \ref{3.notalower}.1). Since the left-hand term is bounded,
it is necessarily $\alfaa(\w_0)=\betaa(\w_0)$.
\smallskip\par
(ii) We work in the case $\mM\subset\W\times[r_1,r_2)$,
being the proof analogous in the other case.
Recall that $\exp\int_0^t a(\w_0{\cdot}s)\,ds=x_l(t,\w_0,1)$, solution of
\eqref{3.eclineal}$_{\w_0}$. Let us
look for $\ep>0$ such that $\ep\sup_{t\le 0}x_l(t,\w_0,1)\le r_2-
\sup\{\betam(\w)\,|\;\w\in\W\}$, and define $z(t):=\betam(\w_0{\cdot}t)
+\ep\,x_l(t,\w_0,1)$. Then $z(t)$ takes values in $[r_1,r_2]$ for $t\le 0$
(due to $\mM\subset\W\times[r_1,r_2]$ and to the choice of $\ep$), and hence
it solves \eqref{3.familia}$_{\w_0}$ in $(-\infty,0]$, where, consequently,
it agrees with $x(t,\w_0,z(0))$.
%ANTES PON\'{I}A $x(t,\w_0,y(0))$???
Therefore this last solution of
\eqref{3.familia}$_{\w_0}$
is globally defined and bounded (see Theorem \ref{3.teorexisteA}(i)), which
ensures that $(\w_0,z(0))\in\mA-\mM$
% ANTES PON\'{I}A $(\w_0,y_0)\in\mA-\mM$???
(see Theorem \ref{3.teorexisteA}(iii)). This proves (ii).
\smallskip\par
The final statements of the theorem follow from (i), (ii), and
Theorem \ref{3.teortapas}(ii).
\end{proof}
%%%%%%%%%%%%%%%%%%%%%%%%%%%%%%%%%%%%%%%%%%%%%%%%%%%%%%%%%%%%
%%%%%%%%%%%%%%%%%%%%%%%%%%%%%%%%%%%%%%%%%%%%%%%%%%%%%%%%%%%%
\subsection{The case $\sup\Sigma_a<0\,$}\label{3.subsec-}
In the next two subsections, we describe the $\tau$-minimal sets and the possibility of
occurrence of chaos for the family of equations \eqref{3.familia},
assuming condition \hyperlink{g4}{\bf g4} (or \hyperlink{g4t}{\bf\~g4})
in two cases which depend on the Sacker and Sell spectrum $\Sigma_a$ of
\eqref{3.eclineal} in two cases: $\sup\Sigma_a<0$ and $\sup\Sigma_a=0$.
Remark~\ref{2.notaDE} explains that the first situation is equivalent
to the negative character of the upper of exponential dichotomy of the
family \eqref{3.eclineal} (which therefore has exponential dichotomy
over $\W$), and that the second one is equivalent to the
null character of that upper Lyapunov exponent (so that the
linear family does not have exponential dichotomy).
\par
Let us begin with the case $\sup\Sigma_a<0$.
There is not much to say in this situation, in which the conditions
assumed on $a$ and $g$ provide a very simple global dynamics:
\begin{teor}\label{3.teornegativo}
Assume that $g$ satisfies \hyperlink{g1}{\bf g1}, \hyperlink{g2}{\bf g2},
\hyperlink{g3}{\bf g3} and \hyperlink{g4}{\bf g4}, let $(\W\times\R,\tau)$
be the flow induced by the family \eqref{3.familia}, and let $\mA$
be the global attractor for $\tau$ provided by Theorem~{\em \ref{3.teorexisteA}}.
Assume also that $\sup\Sigma_a<0$.
Then, $\mA$ is a uniformly exponentially stable at $+\infty$
copy of the base which attracts exponentially any $\tau$-orbit as time
increases. In particular, $\mA$ is the unique $\tau$-minimal set.
\end{teor}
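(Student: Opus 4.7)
The plan is to reduce to Theorem~\ref{3.teortodos-} by showing that, under the standing hypotheses, every $\tau$-minimal set has strictly negative upper Lyapunov exponent. Once this is established, Theorem~\ref{3.teortodos-} delivers the entire conclusion: the unique $\tau$-minimal set $\malfa=\mbeta=\{\alfaa\}=\{\betaa\}$ coincides with the attractor $\mA$, is a uniformly exponentially stable at $+\infty$ copy of the base, and attracts every $\tau$-orbit exponentially.

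Fix a $\tau$-minimal set $\mM\subseteq\mA$ (such a set exists by Zorn's lemma applied to compact $\tau$-invariant subsets of $\mA$). Writing $f(\w,x)=a(\w)\,x+b(\w)+g(\w,x)$, the variational equation over $\mM$ has coefficient
\[
 f_x(\w,x)=a(\w)+g_x(\w,x).
\]
Hypothesis \hyperlink{g2}{\bf g2} together with \hyperlink{g1}{\bf g1} forces $g_x(\w,x)=0$ for $x\in[r_1,r_2]$ (since $g$ vanishes identically on the closed strip, and is $C^1$), while \hyperlink{g4}{\bf g4} gives $g_x(\w,x)\le 0$ for $x\notin[r_1,r_2]$. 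Hence $g_x(\w,x)\le 0$ on all of $\W\times\R$, so $f_x(\w,x)\le a(\w)$ everywhere.

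The upper Lyapunov exponent $\gamma^s_\mM$ of $\mM$ is, by definition, the upper Lyapunov exponent of the family $z'=f_x(\tau(t,\w,x_0))\,z$ over $\mM$. For any $\mu\in\mminv(\mM,\tau)$ projecting onto $m\in\mminv(\W,\sigma)$, the inequality $f_x(\w,x)\le a(\w)$ and the projection identity yield
\[
 \int_\mM f_x(\w,x)\,d\mu\;\le\;\int_\mM a(\w)\,d\mu\;=\;\int_\W a(\w)\,dm.
\]
By Theorem~\ref{2.teorespectro}(ii) and Remark~\ref{2.notaDE}, the right-hand side is at most $\sup\Sigma_a<0$. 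Taking the supremum over $\mu$ and invoking Theorem~\ref{2.teorespectro}(i) applied to the variational family on $\mM$ (so that the supremum defining $\gamma^s_\mM$ is attained at an ergodic measure) gives $\gamma^s_\mM\le \sup\Sigma_a<0$. Thus condition (1) of Theorem~\ref{3.teortodos-} is fulfilled.

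No genuine obstacle arises: the argument is a short combination of the pointwise sign of $g_x$ coming from \hyperlink{g2}{\bf g2}--\hyperlink{g4}{\bf g4}, Birkhoff-type averaging against any invariant measure on $\mM$, and the spectral characterization in Remark~\ref{2.notaDE}. The only point to watch is verifying that $g_x\equiv 0$ on $\W\times[r_1,r_2]$ so that no positive contribution to $f_x$ can arise on the strip where linear dissipation may be neutral; this is immediate from \hyperlink{g2}{\bf g2} and the $C^1$ regularity in \hyperlink{g1}{\bf g1}.
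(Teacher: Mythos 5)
Your proof is correct and follows essentially the same route as the paper's: bound the upper Lyapunov exponent of every $\tau$-minimal set $\mM$ by $\sup\Sigma_a<0$, using $g_x\le 0$ (from \hyperlink{g2}{\bf g2} and \hyperlink{g4}{\bf g4}) together with the ergodic representation of $\gamma^s_\mM$ from Theorem~\ref{2.teorespectro} applied to the variational family over $\mM$, and then invoke Theorem~\ref{3.teortodos-}. The only cosmetic remark is that in the degenerate case $r_1=r_2$ the vanishing of $g_x$ on the strip is imposed explicitly by \hyperlink{g2}{\bf g2} rather than deduced from $C^1$ regularity, but since your argument only needs $g_x\le 0$ this does not affect anything.
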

\begin{proof}
Recall that, if \hyperlink{g1}{\bf g1} holds,
the upper Lyapunov of a $\tau$-minimal set $\mM$ is
\begin{equation}\label{3.ule}
 \gamma_\mM^s=\int_{\mM}\left(a(\w)+g_x(\w,x)\right)\,d\nu_\mM^s
\end{equation}
for a suitable $\tau$-invariant measure $\nu_\mM^s$ on $\W\times\R$.
Therefore,
\begin{equation}\label{3.ule2}
 \gamma_\mM^s\le\int_{\W}a(\w)\,dm^s_\mM\le\sup\Sigma_a\,,
\end{equation}
where $m_\mM^s\in\mminv(\W,\sigma)$ is the $\sigma$-invariant measure onto
which $\nu_\mM^s$ projects.
The first inequality follows from
\eqref{3.ule}, since conditions \hyperlink{g2}{\bf g2} and \hyperlink{g4}{\bf g4}
ensure that $g_x\le 0$; and the second one from
Theorem~\ref{2.teorespectro}.
Therefore, $\gamma_\mM^s<0$ for any $\tau$-minimal set $\mM$ if
$\sup\Sigma_a<0$, and hence the
assertions follow from Theorem~\ref{3.teortodos-}.
\end{proof}
%%%%%%%%%%%%%%%%%%%%%%%%%%%%%%%%%%%%%%%%%%%%%%%%%%%%%%%%%%%%
%%%%%%%%%%%%%%%%%%%%%%%%%%%%%%%%%%%%%%%%%%%%%%%%%%%%%%%%%%%%
\subsection{The case $\sup\Sigma_a=0\,$}\label{3.subsec0}
This final part is devoted to prove that, as advanced in the Introduction,
under the conditions given by \hyperlink{g1}{\bf g1}, \hyperlink{g2}{\bf g2},
\hyperlink{g3}{\bf g3} and \hyperlink{g4t}{\bf\~g4} on $g$,
there are just two possible global dynamics for the flow $(\W\times\R,\tau)$
induced by \eqref{3.familia} when $\sup\Sigma_a=0$, and in one of them
we are able to detect chaotic behavior.
\par
We begin by describing a particularly simple condition under which $\sup\Sigma_a=0$:
the existence of a continuous primitive for $a$: see Definition \ref{2.defcontpri}.
Observe that condition \hyperlink{g3}{\bf g3} is not assumed, since the stated
properties hold independently of the existence of a global attractor.
\begin{teor}\label{3.teorexconpri}
Assume that $g$ satisfies \hyperlink{g1}{\bf g1} and \hyperlink{g2}{\bf g2} and that
the map $a$ admits a continuous primitive. Let $(\W\times\R,\tau)$
be the flow induced by the family \eqref{3.familia}.
Then, $\Sigma_a=\{0\}$ and, in addition,
\begin{itemize}
\item[\rm(i)] any possible $\tau$-minimal set $\mM$ contained in $\W\times[r_1,r_2]$ is
a copy of the base.
\item[\rm(ii)] If $g$ also satisfies \hyperlink{g4}{\bf g4}, any $\tau$-minimal set
$\mM$ is a copy of the base.
\end{itemize}
\end{teor}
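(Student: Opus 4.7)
My plan is to exploit a single key consequence of the existence of a continuous primitive $h_a$: the scalar function $\exp(h_a(\w\cdot t)-h_a(\w))$ is bounded above and below by positive constants, uniformly in $(\w,t)\in\W\times\R$, since $h_a$ is continuous on the compact $\W$. All three claims will follow from this boundedness combined with the semicontinuity of the covers of a $\tau$-minimal set.

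For $\Sigma_a=\{0\}$, I will invoke Remark \ref{2.notacontpri} (or argue directly by integrating the identity $h_a(\w\cdot t)-h_a(\w)=\int_0^t a(\w\cdot s)\,ds$ against any $m\in\mminv(\W,\sigma)$ and using $\sigma$-invariance) to conclude that $\int_\W a\,dm=0$ for every $m\in\mminv(\W,\sigma)$. Then Theorem \ref{2.teorespectro} immediately yields $\Sigma_a=[\,\gamma^i_\W,\gamma^s_\W\,]=\{0\}$.

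For (i), let $\mM\subseteq\W\times[r_1,r_2]$ be $\tau$-minimal with covers $\alfam,\betam$. Because $\mM$ is $\tau$-invariant and contained in the zero set of $g$, the maps $t\mapsto\alfam(\w\cdot t)$ and $t\mapsto\betam(\w\cdot t)$ are honest solutions of the nonhomogeneous linear equation $y'=a(\w\cdot t)y+b(\w\cdot t)$. Subtracting, the nonnegative function $F:=\betam-\alfam$ satisfies the exact identity
\[
 F(\w\cdot t)=F(\w)\exp\!\bigl(h_a(\w\cdot t)-h_a(\w)\bigr)\quad\text{for all }\w\in\W,\;t\in\R.
\]
Pick any $\w\in\W$ and, by minimality of $(\W,\sigma)$, a sequence $t_n$ with $\w\cdot t_n\to\w_0$ for some continuity point $\w_0\in\W_\mM$, where $F(\w_0)=0$. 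Upper semicontinuity of $\betam$ and lower semicontinuity of $\alfam$ yield $\limsup F(\w\cdot t_n)\le F(\w_0)=0$, hence $F(\w\cdot t_n)\to 0$. Combining this with $F(\w\cdot t_n)\ge F(\w)\,e^{-2\|h_a\|_\W}$ forces $F(\w)=0$. Thus $\alfam\equiv\betam$; since one is lsc and the other usc, both are continuous, so $\mM=\{\alfam\}=\{\betam\}$ is a copy of the base.

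For (ii), assume additionally \hyperlink{g4}{\bf g4}; since $g\equiv 0$ on $\W\times[r_1,r_2]$ forces $g_x=0$ there by continuity, we have $g_x\le 0$ globally, i.e.~$g(\w,\cdot)$ is nonincreasing. For any $\tau$-minimal $\mM$ the same $F=\betam-\alfam\ge 0$ now satisfies the differential \emph{inequality} $F'(\w\cdot t)\le a(\w\cdot t)\,F(\w\cdot t)$ (by Remark \ref{3.notalower}.3, or a direct one-line computation using monotonicity of $g$). A Gronwall comparison in the backward direction gives
\[
 F(\w\cdot t)\ge F(\w)\exp\!\bigl(h_a(\w\cdot t)-h_a(\w)\bigr)\quad\text{for all }t\le 0,
\]
and the right-hand side is again bounded below by $F(\w)\,e^{-2\|h_a\|_\W}$. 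Using that backward orbits are also dense (by minimality), I choose $t_n\to-\infty$ with $\w\cdot t_n$ converging to a continuity point of the covers, conclude as before that $F(\w\cdot t_n)\to 0$, and deduce $F(\w)=0$. So $\mM$ is a copy of the base.

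The only subtle step is choosing the correct direction of time in the Gronwall comparison: the subsolution inequality $F'\le aF$ gives an upper bound on $F$ as $t$ increases and a matching lower bound as $t$ decreases, and it is the \emph{lower} bound that, together with boundedness of $\exp(h_a(\w\cdot t)-h_a(\w))$ away from zero, kills $F(\w)$ once we approach a continuity point where $F$ vanishes. Everything else is routine given Theorem \ref{2.teorespectro}, the general structure of scalar minimal sets from Subsection \ref{2.subsecescalar}, and minimality of the base flow.
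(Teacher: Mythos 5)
Your proposal is correct and follows essentially the same route as the paper: both arguments hinge on the lower bound $F(\w{\cdot}t)\ge F(\w)\exp\bigl(h_a(\w{\cdot}t)-h_a(\w)\bigr)$ for $t\le 0$ (exact in case (i), via the lower-solution comparison of Remark \ref{3.notalower} in case (ii)), the uniform two-sided bound on $e^{h_a}$ coming from compactness of $\W$, and the vanishing of $F=\betam-\alfam$ along a backward sequence approaching a continuity point of the covers. The paper merely packages this as the factorization $\betam-\alfam=k\,H_a$ with $H_a=e^{h_a}$ bounded from below and $t\mapsto k(\wt)$ nonincreasing, which is a cosmetic rather than substantive difference from your Gronwall formulation.
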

\begin{proof}
The fact that $\Sigma_a=\{0\}$ follows easily from Theorem~\ref{2.teorespectro} and
Birkhoff's ergodic theorem. Let $h_a\colon\W\to\R$ be a continuous primitive of $a$, and
$H_a:=e^{h_a}$. Then, for any $\w\in\W$ and $t\in\R$, $H_a(\wt)=
H_a(\w)\exp\big(\int_0^t a(\ws)\,ds\big)$. In other words,
$H_a(\wt)=x_l(t,\w,H_a(\w))$, solution of $x'=a(\wt)\,x$. Note
also that $H_a$ is positive and bounded from below on $\W$.
\par
Let $\alfam$ and $\betam$ be the maps appearing in the description
\eqref{2.M} of $\mM$. The fundamental points in this proof have been
explained in Remark \ref{3.notalower}: if $\mM$ is contained in
$\W\times[r_1,r_2]$ (as we assume in (i)), or if \hyperlink{g4}{\bf g4}
holds (as in (ii)), then $\betam(\wt)-\alfam(\wt)\le
x_l(t,\w,\betam(\w)-\alfam(\w))$ for any $\w\in\W$ whenever $t\ge 0$.
Let us write $\betam(\w)-\alfam(\w)=k(\w)\,H_a(\w)$. Then,
\[
\begin{split}
 k(\wt)\,H_a(\wt)
 &=\betam(\wt)-\alfam(\wt)\le x_l(t,\w,\betam(\w)-\alfam(\w))\\
 &=x_l(t,\w,k(\w)\,H_a(\w))=k(\w)\,H_a(\wt)
\end{split}
\]
whenever $\w\in\W$ and $t\ge 0$.
It follows easily that the continuous map $t\mapsto k(\wt)$ is decreasing
for any $\w\in\W$. Now we fix any $\w\in\W$ and choose $\w_0$ in the common
set of continuity points of $\alfam$ and $\betam$, so that $\alfam(\w_0)=
\betam(\w_0)$. We look for $(t_n)\downarrow-\infty$ such that $\w_0=
\lim_{n\to\infty}\wt_n$. Then, $\lim_{n\to\infty}(\betam(\wt_n)-\alfam(\wt_n))
=\betam(\w_0)-\alfam(\w_0)=0$, which since $H_a$ is bounded from below
ensures that $\lim_{n\to\infty}k(\wt_n)=0$. Consequently,
$k(\w)=0$, which shows that $\alfam(\w)=\betam(\w)$.
%Since its graph determines $\mM$, the function is continuous.
The proof is complete.
\end{proof}
In the rest of the results we do not assume the existence of a continuous
primitive for $a$. On the contrary, we will see in Theorem \ref{3.teorSup0-lamin}
that this property is not a hypothesis but a consequence
of the first one of the dynamical possibilities for the dynamics
described in the Introduction.
And the existence of continuous primitive will be precluded in the analysis
of the possible occurrence of Li-Yorke chaos and Auslander-Yorke chaos
in the second dynamical possibility (in Theorems \ref{3.teorSup0-chaosLY}
and \ref{3.teorSup0-chaosAY}):
one of our hypotheses there will be precisely the absence of continuous
primitive of $a$.
\par
The next result establishes general properties of the minimal sets. As in the
previous one, condition \hyperlink{g3}{\bf g3} is not assumed, since
the description of the attractor is postponed. In particular,
we check that a $\tau$-minimal set which is not a copy of the base, if it
exists, is contained in $\W\times[r_1,r_2]$ (and hence requires $r_1<r_2$:
such a minimal set cannot
exist in the purely dissipative case if $\sup\Sigma_a=0$).
\begin{teor}\label{3.teorSup0-general}
Assume that $g$ satisfies \hyperlink{g1}{\bf g1}, \hyperlink{g2}{\bf g2}
and \hyperlink{g4t}{\bf\~g4}, and let $(\W\times\R,\tau)$
be the flow induced by the family \eqref{3.familia}.
Assume also that $\sup\Sigma_a=0$, let $\mM$ be a $\tau$-minimal set,
let $\alfam,\,\betam\colon\W\to\R$ be the semicontinuous $\tau$-equilibria
associated to $\mM$ by \eqref{2.M}, and let $\W_\mM$ be the set of their common
continuity points. Then,
\begin{itemize}
\item[\rm(i)] there exists $\w\in\W_\mM$ such that $\alfam(\w)<r_1$ if and only if there
there exists $(\w,x)\in\mM$ with $x<r_1$. In this case, $\mM$ is a uniformly exponentially
stable at $+\infty$ copy of the base: $\mM=\{\alfam\}=\{\betam\}$.
\item[\rm(ii)] There exists $\w\in\W_\mM$ such that $\betam(\w)>r_2$ if and only if there
there exists $(\w,x)\in\mM$ with $x>r_2$. In this case, $\mM$ is a uniformly exponentially
stable at $+\infty$ copy of the base: $\mM=\{\alfam\}=\{\betam\}$.
\end{itemize}
Consequently, if $\mM$ is not a copy of the base, then
$r_1<r_2$ and $\mM\subset\W\times[r_1,r_2]$. In addition,
\begin{itemize}
\item[\rm(iii)] $\mM\subseteq\W\times[r_1,r_2]$ if
its upper Lyapunov exponent is $0$.
\item[\rm(iv)] If $\mM\subseteq\W\times[r_1,r_2]$ and either $r_1<r_2$
or $r_1=r_2$ and $g_x(\w,r_1)=0$ for all $\w\in\W$, then
the upper Lypunov of $\mM$ is $0$.
\end{itemize}
\end{teor}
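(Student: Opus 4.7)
The plan is to prove (i) by a density/orbit-closure argument followed by a Lyapunov integral estimate, to obtain (ii) by the symmetric argument, to deduce the \emph{consequently} statement and (iii) as immediate corollaries, and to handle (iv) via an explicit invariant measure lifted from $\W$ to $\mM$ along $\alfam$.

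For (i), the implication $\alfam(\w)<r_1\Rightarrow\exists\,(\w,x)\in\mM$ with $x<r_1$ is trivial (take $x=\alfam(\w)$). Conversely, if $(\w_0,x_0)\in\mM$ has $x_0<r_1$, then $\mN:=\mM\cap(\W\times(-\infty,r_1))$ is a nonempty relatively open subset of $\mM$; picking any $\w_0'\in\W_\mM$ (nonempty by residuality), the orbit $\{(\w_0'\pu t,\alfam(\w_0'\pu t))\,|\;t\in\R\}$ is dense in $\mM$, so some $t^*\in\R$ satisfies $(\w_0'\pu t^*,\alfam(\w_0'\pu t^*))\in\mN$; then $\w':=\w_0'\pu t^*\in\W_\mM$ by $\sigma$-invariance, and $\alfam(\w')<r_1$. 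For the stability claim I will show $\gamma^s_\mM<0$ and invoke Corollary~\ref{2.coroDE}. Theorem~\ref{2.teorespectro} applied to the linearized family \eqref{2.ecvari} over $\mM$ supplies $\nu^s\in\mmerg(\mM,\tau)$, projecting to some $m^s\in\mmerg(\W,\sigma)$, with $\gamma^s_\mM=\int_\mM(a(\w)+g_x(\w,x))\,d\nu^s$. Since $\mM$ is $\tau$-minimal, $\Supp\nu^s=\mM$, so $\nu^s(\mN)>0$; exhausting $\mN$ by the compact sets $A_n:=\mM\cap(\W\times(-\infty,r_1-1/n])$ yields some $n$ with $\nu^s(A_n)>0$, and on this compact piece \hyperlink{g4t}{\bf\~{g4}} and continuity of $g_x$ give $g_x\le-c_n<0$. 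Since $g_x\le 0$ on all of $\mM$ (by \hyperlink{g2}{\bf g2} and \hyperlink{g4t}{\bf\~{g4}}), this forces $\int_\mM g_x\,d\nu^s<0$, while $\int_\mM a\,d\nu^s=\int_\W a\,dm^s\le\sup\Sigma_a=0$; hence $\gamma^s_\mM<0$, and $\mM=\{\alfam\}=\{\betam\}$ follows from Corollary~\ref{2.coroDE}.

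Part (ii) is obtained by the symmetric argument with $x>r_2$ replacing $x<r_1$. The \emph{consequently} statement then follows: since (i) and (ii) would each force $\mM$ to be a copy of the base if it contained a point outside $[r_1,r_2]$, a non-copy $\mM$ must satisfy $\mM\subset\W\times[r_1,r_2]$; and if $r_1=r_2=r$ were also the case, then $\mM\subset\W\times\{r\}$ would project fully onto $\W$ to give $\mM=\W\times\{r\}$, itself a copy of the base, contradicting the hypothesis. Part (iii) is the immediate contrapositive: the existence of a point of $\mM$ outside $[r_1,r_2]$ would force $\gamma^s_\mM<0$, so $\gamma^s_\mM=0$ prevents it.

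For (iv), \hyperlink{g1}{\bf g1}, \hyperlink{g2}{\bf g2}, and \hyperlink{g4t}{\bf\~{g4}} combine with the case hypothesis to give $g_x\equiv 0$ on $\W\times[r_1,r_2]$: if $r_1<r_2$ then $g\equiv 0$ there so $g_x=0$ on the interior, and continuity of $g_x$ extends this to the endpoints; if $r_1=r_2=r$ the condition $g_x(\w,r)=0$ is part of the hypothesis. Consequently the linearized flow along $\mM\subset\W\times[r_1,r_2]$ is $z'=a(\wt)z$, so $\gamma^s_\mM=\sup\{\int_\W a\,dm\,|\;m=\pi_*\nu,\,\nu\in\mminv(\mM,\tau)\}\le\sup\Sigma_a=0$. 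For the matching lower bound, pick $m^s\in\mmerg(\W,\sigma)$ with $\int a\,dm^s=\sup\Sigma_a=0$ (Theorem~\ref{2.teorespectro}) and define the Borel map $\psi_0\colon\W\to\mM$ by $\psi_0(\w):=(\w,\alfam(\w))$. The $\tau$-equilibrium identity $\alfam(\w\pu t)=x(t,\w,\alfam(\w))$ gives $\tau_t\circ\psi_0=\psi_0\circ\sigma_t$, so $(\psi_0)_*m^s$ is $\tau$-invariant on $\mM$ with projection $m^s$, yielding $\gamma^s_\mM\ge\int a\,dm^s=0$. The main obstacle is the strict-negativity step in (i); once a compact piece of $\mM$ with uniform $g_x\le -c<0$ and positive $\nu^s$-mass is in hand, (ii), (iii), and (iv) follow by symmetry, contrapositive, and the explicit lift just described.
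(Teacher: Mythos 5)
Your proof is correct and follows essentially the same route as the paper's: the density of the equilibrium orbit through a point of $\W_\mM$ for the equivalences in (i)--(ii); an ergodic measure $\nu^s$ of full support on $\mM$ computing $\gamma^s_\mM$, combined with $g_x\le 0$ everywhere and \textbf{\~{g4}} on a positive-measure piece below $r_1$ (or above $r_2$) to force $\gamma^s_\mM<0$ and invoke Corollary \ref{2.coroDE}; the contrapositive for (iii); and in (iv) the lifted measure $(\psi_0)_*m^s$, which is exactly the paper's measure $\nu^s$ defined by $\int_{\W\times\R} f\,d\nu^s:=\int_\W f(\w,\alfam(\w))\,dm^s$. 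The only cosmetic difference is your compact exhaustion $A_n=\mM\cap(\W\times(-\infty,r_1-1/n])$ in place of the paper's open set on which $g_x<-\rho/2$; both yield the strict negativity of $\int_\mM g_x\,d\nu^s$ in the same way.
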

\begin{proof}
We have seen in the proof of Theorem \ref{3.teornegativo} that conditions
\hyperlink{g1}{\bf g1}, \hyperlink{g2}{\bf g2} and \hyperlink{g4}{\bf g4}
guarantee \eqref{3.ule2}, which in turn ensures that the upper Lyapunov exponent
of any $\tau$-minimal set $\mM$ is $\gamma_\mM^s\le 0$
if $\sup\Sigma_a=0$, as we assume in this subsection. This fact will
be used in what follows.
\smallskip\par
(i) Recall that $\mM=\text{closure}_{\W\times\R}\{(\wt,\alfam(\wt))\,|\;t\in\R\}$,
where $\w$ is any point in $\W_\mM$: see Subsection \ref{2.subsecescalar}.
The first assertion in (i) follows easily from here.
Now we will prove that $\mM$ has negative upper Lyapunov exponent $\gamma_\mM^s$.
Recall that the upper Lyapunov exponent is given by
\eqref{3.ule} for a suitable $\tau$-invariant
measure $\nu_\mM^s$, whose support is, due to minimality, the whole of
$\mM$. Let us take $(\w_0,x_0)\in\malfa$ with $x_0<r_1$.
Property \hyperlink{g4t}{\bf\~g4} ensures that $g_x(\w_0,x_0)=-\rho<0$,
so that \hyperlink{g1}{\bf g1} ensures the existence of
an open set $\mB$ of $\W\times\R$ with $\mB\cap\mM$ non empty and on
which $g_x$ is less that $-\rho/2$.
Since $\mB$ is open and $\Supp\nu^s_\mM=\mM$, we have
$\nu_\mM^s(\mB\cap\malfa)>0$. Using this fact and the
property $g_x\le 0$ everywhere, we obtain
$\int_{\mM}g_x(\w,x)\,d\nu_\mM^s\le\int_{\mB\cap\mM}g_x(\w,x)\,d\nu_\mM^s
\le(-\rho/2)\,\nu_\mM^s(\mB\cap\malfa)<0$, and hence $\gamma_\mM^s<
\int_{\W}a(\w)\,dm_\mM$,
where $m_\mM\in\mminv(\W,\sigma)$ is the measure onto which $\nu_\mM^s$
projects. Definition \ref{2.defiexp} and Theorem~\ref{2.teorespectro}
show that $\sup\Sigma_a=0$ yields
$\int_{\W}a(\w)\,dm_\mM\le 0$, and hence $\gamma_\mM^s<0$.
Corollary \ref{2.coroDE} shows that $\mM$ is a uniformly exponentially
stable at $+\infty$ copy of the base, which in turn ensures that $\alfam$ is
continuous and equal to $\betam$, and that its graph is $\mM$.
\smallskip\par
(ii) The proof of this point is analogous, and the consequence of (i) and (ii)
is clear.
\smallskip\par
(iii)\&(iv) Properties (i) and (ii) prove point (iii). To prove (iv),
we take a $\tau$-minimal set $\mM\subseteq\W\times[r_1,r_2]$.
Theorem \ref{2.teorespectro} ensures the existence of
$m^s\in\mmerg(\W,\sigma)$ such that $\int_\W a(\w)\,dm^s=0$.
Let us define $\nu^s$ from $m^s$ by $\int_{\W\times\R}f(\w,x)\,d\nu^s:=
\int_\W f(\w,\alfam(\w))\,dm^s$ for
$f\colon\W\times\R\to\R$ continuous.
It is easy to check that $\nu^s$ is $\tau$-invariant with $\nu^s(\mM)=1$
(i.e.,~$\nu^s\in\mminv(\mM,\tau)$), and that it projects onto $m^s$.
Since, under the conditions in (iv) (see \hyperlink{g2}{\bf g2}),
$g_x\equiv 0$ on $\W\times[r_1,r_2]$, we have
\[
 \int_{\mM}\left(a(\w)+g_x(\w,x)\right)\,d\nu^s=\int_\W a(\w)\,dm^s=0\,,
\]
and, since $\gamma^s_\mM\le 0$ for any $\tau$-minimal set $\mM$,
we deduce that $\gamma^s_\mM=0$.
\end{proof}
The next results plays a fundamental role in the analysis of the occurrence
of Li-Yorke chaos in the second dynamical situation,
carried-on in Theorem \ref{3.teorSup0-chaosLY}. It establishes conditions under which
the attractor is $m$-almost contained in $\W\times[r_1,r_2]$ (i.e., $\mA_\w\subseteq
[r_1,r_2]$ for $m$-almost every $\w\in\W$), where $m\in\mmerg(\W,\sigma)$
satisfies $\int_\W a(\w)\,dm=0$. Recall once again that Theorem \ref{2.teorespectro}
guarantees the existence of such a measure when $\sup\Sigma_a=0$.
Now, for the sake of generality, we simply assume that $0\in\Sigma_a$ and
that $m$ exists. The results is valid for the linear dissipative and
purely dissipative cases.
\begin{teor}\label{3.teorlineal}
Assume that $g$ satisfies \hyperlink{g1}{\bf g1}, \hyperlink{g2}{\bf g2} and
\hyperlink{g3}{\bf g3},
let $(\W\times\R,\tau)$ be the flow induced by the family \eqref{3.familia},
and let $\mA$ be the global attractor for $\tau$ provided
by Theorem~{\rm\ref{3.teorexisteA}}.
Assume also that $0\in\Sigma_a$ and that
$a\in\mR_m(\W)$, where $m\in\mmerg(\W,\sigma)$ satisfies $\int_\W a(\w)\,dm=0$.
And assume finally that there exists a minimal $\mM\subseteq\W\times[r_1,r_2]$.
Then, the $\sigma$-invariant set
\begin{equation}\label{3.Omegal}
 \W_l:=\{\w\in\W^a\,|\;r_1\le\alfaa(\wt)\le\betaa(\wt)\le r_2 \text{ for all $t\in\R$}\}
\end{equation}
satisfies $m(\W_l)=1$.
\end{teor}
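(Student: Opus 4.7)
The plan is to couple the differential inequality from Remark~\ref{3.notalower} with the $H_a$-normalization provided by Proposition~\ref{2.propR} in order to turn a pointwise monotonicity along orbits into a $\sigma$-invariant quantity, and then extract the conclusion from the ergodicity of $m$.

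First, I note a pointwise bound: since $\mM\subseteq\mA$ projects onto the whole base and $\mM\subseteq\W\times[r_1,r_2]$, any point of $\mM$ over $\w$ witnesses $\alfaa(\w)\le r_2$ and $\betaa(\w)\ge r_1$. Therefore the hypotheses of Remark~\ref{3.notalower} hold for the pair $\alpha=\alfam$, $\beta=\betaa$: the function $W(\w):=\betaa(\w)-\alfam(\w)\ge 0$ satisfies, along every orbit, the relation $W'(\wt)\le a(\wt)\,W(\wt)$, with equality at $t$ iff $g(\wt,\betaa(\wt))=0$, iff $\betaa(\wt)\in[r_1,r_2]$ (by \hyperlink{g2}{\bf g2}).

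On the $\sigma$-invariant set $\W^a$ (of full $m$-measure) Proposition~\ref{2.propR} gives $H_a(\w)>0$ and $H_a(\wt)=H_a(\w)\exp\bigl(\int_0^t a(\ws)\,ds\bigr)$, so $\phi(\w):=W(\w)/H_a(\w)$ is well defined, continuous along orbits, and non-increasing in $t$. Since $\phi$ need not be integrable, I truncate: for each $N>0$, $\phi_N:=\min(\phi,N)\in L^\infty(m)$ still obeys $\phi_N\circ\sigma_t\le\phi_N$ for $t\ge 0$. The $\sigma$-invariance of $m$ forces $\int\phi_N\circ\sigma_t\,dm=\int\phi_N\,dm$, and hence $\phi_N\circ\sigma_t=\phi_N$ $m$-a.e.\ for every such $t$. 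Running through a countable dense set of times, invoking continuity in $t$, and letting $N\to\infty$, I obtain $\phi(\wt)=\phi(\w)$ for all $t\in\R$ and $m$-a.e.\ $\w$. Ergodicity then yields a constant $c\ge 0$ with $\phi\equiv c$ $m$-a.e.

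To finish, I split cases. If $c>0$, then $W(\wt)=c\,H_a(\wt)$ along $m$-a.e.\ orbit, so $W'(\wt)=a(\wt)\,W(\wt)$ and the equality criterion forces $\betaa(\wt)\in[r_1,r_2]$ for all $t$. If $c=0$, then $\betaa=\alfam$ on a set of full $m$-measure, which is $\sigma$-invariant because both $\alfam$ and $\betaa$ are $\tau$-equilibria, so $\betaa(\wt)=\alfam(\wt)\in[r_1,r_2]$ for all $t$. Either way $\betaa(\wt)\le r_2$ for all $t$, $m$-a.e. Running the same argument with $U:=\betam-\alfaa$ in place of $W$ yields $\alfaa(\wt)\ge r_1$ for all $t$, $m$-a.e., and intersecting these full-measure sets with $\W^a$ gives $m(\W_l)=1$. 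The main obstacle is the upgrade from pointwise monotonicity of $\phi$ to genuine $\sigma$-invariance: $\phi$ need not lie in $L^1(m)$ because $H_a$ can take arbitrarily small values along orbits, so one must proceed through the truncations $\phi_N$ and a continuity-in-$t$ argument to patch identities across all $t\in\R$.
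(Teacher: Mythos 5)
Your proof is correct, and it takes a genuinely different route from the paper's. The paper proceeds by contradiction: supposing the set of $\w$ whose orbit lifts $\betaa$ above $r_2+s$ has positive $m$-measure, it applies Lusin's theorem to get a compact $\mK$ on which $\alfaa,\betaa$ restrict continuously with $\betaa-\alfaa$ bounded below, Birkhoff's theorem to return orbits to $\mK$ along times $(t_n)\uparrow\infty$, and then a delicate comparison argument (exploiting the strict differential inequality at times where $\betaa(\wt)>r_2+s$, via a $c_0$ versus $c_0+\ep$ limit analysis) to force $\exp\int_0^{t_n}a(\ws)\,ds\to\infty$, contradicting the $m$-a.e.\ bound $\sup_{t\in\R}\int_0^t a(\ws)\,ds<\infty$ from Proposition \ref{2.propR}(2). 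You instead use part (3) of that proposition: normalizing the nonnegative lower solution $W=\betaa-\alfam$ by the cocycle $H_a$ yields a quantity $\phi=W/H_a$ that is non-increasing along orbits in $\W^a$, and the standard principle that a sub-invariant measurable function is a.e.\ invariant under an invariant probability measure (your truncation $\phi_N=\min(\phi,N)$ correctly handles the failure of integrability, since $H_a$ is not bounded away from zero) combines with ergodicity to make $\phi$ a.e.\ constant; the equality case of the differential inequality, namely $g(\wt,\betaa(\wt))=0$ (clean here precisely because you chose $\alfam$, on whose graph $g$ vanishes identically, rather than $\alfaa$), then pins $\betaa(\wt)\le r_2$ along a.e.\ orbit, and symmetrically $U=\betam-\alfaa$ gives $\alfaa(\wt)\ge r_1$. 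Your argument is softer and more conceptual—all dynamical input is packed into the single monotone quantity $W/H_a$, with no recurrence or Lusin step—whereas the paper's proof is more hands-on and uses only the boundedness statement about $\int_0^t a$, not the cocycle representation. Two steps you gloss are routine but should be written out: the upgrade from constancy of $\phi_N(\wt)$ on a countable dense set of $t\ge 0$ to all $t\in\R$ uses monotonicity and continuity of $t\mapsto\phi(\wt)$ for positive times and, for negative times, the $\sigma$-invariance of $m$ (intersect the good set with its images $\sigma_n(\cdot)$, $n\in\N$); and the measurability of $\phi$ on $\W^a=\{\w\,|\;H_a(\w)>0\}$ holds because $W$ is a difference of semicontinuous equilibria and $H_a$ is upper semicontinuous.
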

\begin{proof}
The ideas are taken from
\cite[Theorem 35]{calo} and \cite[Theorem 5.8]{clos}.
Note that it is enough to check that the two $\sigma$-invariant sets
\[
\begin{split}
 \W_\alpha &:=\{\w\in\W\,|\;
 \text{there exists $t\in\R$ such that $\alfaa(\wt)<r_1$}\}\,,\\
 \W_\beta &:=\{\w\in\W\,|\;
 \text{there exists $t\in\R$ such that $\betaa(\wt)>r_2$}\}
\end{split}
\]
have null measure. We will reason with $\W_\beta$, being the argument
similar in the case of $\W_\alpha$. Let us assume for contradiction
that $m(\W_\beta)>0$. This provides $s>0$ such that
$\W_{\beta,s}:=\{\w\in\W\,|\;
\text{there exists $t\in\R$ with $\betaa(\wt)>r_2+s$}\}\subseteq\W_\beta$
has positive measure. We call
$\W_{\beta,s}^+:=\{\w\in\W\,|\;
\text{there exists $t>0$ with $\betaa(\wt)>r_2+s$}\}\subseteq\W_\beta$.
\par
We use Lusin's theorem to find a compact set
$\mK\subset\W_{\beta,s}$ with positive measure
such that the restrictions of $\betaa$ and $\alfaa$ to $\mK$ are continuous.
Note that $\alfaa(\w)\ne\betaa(\w)$ whenever $\w\in\mK$, since
the hypothesis $\mM\subseteq\W\times[r_1,r_2]$ and the definition
of $\W_\beta$ provide, for any $\w\in\W_\beta$, a time $t\in\R$ such that
$\alfaa(\wt)\le r_2<\betaa(\wt)$.
We will use this property later.
Birkhoff's ergodic theorem ensures that for $m$-a.e.~$\w\in\W$ there exists
$(t_n)\uparrow\infty$ such that $\wt_n\in\mK$, and the regularity of the measure
provides a new compact set $\mC$ with positive measure with the previous property.
Our next goal is proving that $\mC\subset\W_{\beta,s}^+$. First we check the
existence of $\wit t>0$ such that for any $\w\in\mK$ there exists
$t\in[-\wit t,\wit t]$ with $\betaa(\wt)>r_2+s$. This follows easily
from the equality $\betaa(\wt)=x(t,\w,\betaa(\w))$, the continuity of
$\betaa|_{\mK}$ and the compactness of $\mK$. Now we take $\w\in\mC$,
look for $t_n>\wit t$ such that $\wt_n\in\mK$, and look for $t\in[-\wit t,\wit t]$
such that $\betaa((\wt_n){\cdot}t)>r_2+s$. Since $(\wt_n){\cdot}t=\w{\cdot}(t_n+t)$ and
$t_n+t>0$, we conclude that $\w\in\W_{\beta,s}^+$, as asserted.
\par
Let us fix $\w\in\mC$ and $(t_n)\uparrow\infty$ such that $\wt_n\in\mK$ for all $n\in\N$,
and such that there exists $\ww:=\lim_{n\to\infty}\wt_n$ (so that $\ww\in\mK$).
%We can assume without restriction that $t_n-t_{n-1}\ge\wit t$.
We will check that $\lim_{n\to\infty}\exp\big(\int_0^{t_n}a(\ws)\,ds\big)=\infty$,
or, equivalently, that $\lim_{n\to\infty}x_l(t_n,\w,\betaa(\w)-\alfaa(\w))=\infty$.
Before that, observe that this fact contradicts Proposition \ref{2.propR}(2),
since $m(\mC)>0$, and hence it completes the proof.
\par
As established in Remark~\ref{3.notalower}.2,
the fact that $M\subseteq\W\times[r_1,r_2]$
ensures that any $c>0$ determines the lower solution
$t\mapsto c\,(\betaa(\wt)-\alfaa(\wt))$ for the linear equation $z'=a(\wt)\,z$, and hence
that $x_l(t_n,\w,\betaa(\w)-\alfaa(\w))\ge \betaa(\wt_n)-\alfaa(\wt_n)>
\inf_{\w\in\mK}(\betaa(\w)-\alfaa(\w))>0$. (This is the point in which we use
$\alfaa(\w)<\betaa(\w)$ for $\w\in\mK$.) Let us assume for contradiction
that, for a suitable subsequence $(t_k)$, we have
$\lim_{k\to\infty}x_l(t_k,\w,\betaa(\w)-\alfaa(\w))=
c_0\,(\betaa(\ww)-\alfaa(\ww))$, finite, and hence $c_0>0$.
We take $t_{\ww}$ such that $\betaa(\wwt_{\ww})>
r_2+s$, so that $(d/dt)(\betaa(\wwt_{\ww})-\alfaa(\wwt_{\ww}))
<a(\wwt_{\ww})(\betaa(\wwt_{\ww})-\alfaa(\wwt_{\ww}))$.
This ensures the existence of $\ep>0$ and $t_*>t_{\ww}$ such that
$(c_0+\ep)(\betaa(\wwt_*)-\alfaa(\wwt_*))<
x_l(t_*,\ww,c_0\,(\betaa(\ww)-\alfaa(\ww)))$. In turn,
the last inequality and the definition of $c_0$ provide a point
$t_{k_0}$ of the sequence with
$(c_0+\ep)(\betaa(\w{\cdot}(t_{k_0}+t_*))-\alfaa(\w{\cdot}(t_{k_0}+t_*)))<
x_l(t_*,\wt_{k_0},x_l(t_{k_0},\w,\betaa(\w)-\alfaa(\w)))$ $=x_l(t_*+t_{k_0},\w,
\betaa(\w)-\alfaa(\w))$. Now, we write $t_k=t_*+t_{k_0}+s_k$ with $s_k>0$
for large enough $k$. Then,
\[
\begin{split}
 &x_l(t_k,\w,\betaa(\w)-\alfaa(\w))\\
 &\qquad\qquad=
 x_l(s_k,\w{\cdot}(t_*+t_{k_0}),
 x_l(t_*+t_{k_0},\w,\betaa(\w)-\alfaa(\w)))\\
 &\qquad\qquad>
 x_l(s_k,\w{\cdot}(t_*+t_{k_0}),(c_0+\ep)(\betaa(\w{\cdot}(t_{k_0}+t_*))-
 \alfaa(\w{\cdot}(t_{k_0}+t_*))))\\
 &\qquad\qquad
 \ge (c_0+\ep)(\betaa(\wt_k)-\alfaa(\wt_k))\,.
\end{split}
\]
We have used again Remark~\ref{3.notalower}.2 for the last inequality.
Taking limits as $k\to\infty$, we get $c_0\ge c_0+\ep$.
This is the sought-for contradiction. The proof is complete.
\end{proof}
Let us finally describe the two dynamical possibilities in the case
$\sup\Sigma_a=0$, as well as the cases in which we can ensure the
occurrence of Li-Yorke chaos and Auslander-Yorke chaos.
The first possibility, now analyzed, occurs if and only if the maps
$\alfaa$ and $\betaa$ of Theorem \ref{3.teorexisteA}
coincide at no point of $\W$.
The second one, which occurs when $\alfaa$ and $\betaa$ coincide at
(at least) one point of $\W$, is studied in Theorem \ref{3.teorSup0-pinched}.
And the situations in which we are able to detect Li-Yorke chaos and
Auslander-Yorke chaos are described in Theorems \ref{3.teorSup0-chaosLY}
and \ref{3.teorSup0-chaosAY}, which fit in the second dynamical possibility.
\begin{teor}\label{3.teorSup0-lamin}
Assume that $g$ satisfies \hyperlink{g1}{\bf g1}, \hyperlink{g2}{\bf g2},
\hyperlink{g3}{\bf g3} and \hyperlink{g4t}{\bf\~g4},
let $(\W\times\R,\tau)$
be the flow induced by the family \eqref{3.familia},
let $\mA$, $\alpha_{\mA}$ and $\betaa$ be provided by
Theorem~{\rm\ref{3.teorexisteA}}, and let $\W_c$, $\malfa$ and $\mbeta$ be
defined in Theorem~{\rm\ref{3.teortapas}}. Assume also that $\sup\Sigma_a=0$,
and that there exists $\w_0\in\W_c$ such that $\alfaa(\w_0)<\betaa(\w_0)$.
%\comment{A\~{N}ADIR PROPIEDADES QUE NECESITAMOS EN EL SIGUIENTE???}
Then, $\malfa<\mbeta$. In addition,
\begin{itemize}
\item[\rm(i)] $r_1<r_2$: we are necessarily in the linear dissipative case.
\item[\rm(ii)] The map $a$ has a continuous primitive.
\item[\rm(iii)] $\W=\W_c$, $\malfa=\{\alfaa\}$ and $\mbeta=\{\betaa\}$.
\item[\rm(iv)] Any $\tau$-minimal set is the graph $\mM^c$ of the continuous
map $c\alfaa+(1-c)\betaa$ for a $c\in[0,1]$, and has zero upper Lyapunov exponent.
\item[\rm(v)] $\mA=\bigcup_{c\in[0,1]}\mM^c\subseteq\W\times[r_1,r_2]$, and hence
the restriction of $\tau$ to $\mA$ is linear and uniformly stable at $\pm\infty$.
\end{itemize}
\end{teor}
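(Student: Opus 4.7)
Since $\w_0\in\W_c$, Theorem~\ref{3.teortapas}(i) gives $(\malfa)_{\w_0}=\{\alfaa(\w_0)\}$ and $(\mbeta)_{\w_0}=\{\betaa(\w_0)\}$ as distinct singletons, so $\malfa\ne\mbeta$ and the fiber-ordering of Subsection~\ref{2.subsecescalar} forces $\malfa<\mbeta$. By Theorem~\ref{3.teortapas}(ii) $\mA$ is not pinched: $\alfaa(\w)<\betaa(\w)$ for every $\w\in\W$. Since $\sup\Sigma_a=0$, the argument behind \eqref{3.ule2} shows every $\tau$-minimal set has upper Lyapunov exponent $\le 0$; if all were strictly negative, Theorem~\ref{3.teortodos-} would yield a unique minimal set, contradicting $\malfa\ne\mbeta$. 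Hence some minimal $\mM_0$ has exponent $0$, and Theorem~\ref{3.teorSup0-general}(iii) places it in $\W\times[r_1,r_2]$. In particular $\alfaa\le\alpha_{\mM_0}\le r_2$ and $\betaa\ge\beta_{\mM_0}\ge r_1$, so the contrapositive of Theorem~\ref{3.teorWc}(i) yields $\sup_{t\le 0}\int_0^t a(\ws)\,ds<\infty$ for every $\w\in\W$. The main obstacle is now upgrading this one-sided bound to a continuous primitive for $a$, i.e., establishing (ii).

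I plan to prove (ii) by contradiction: if $a$ admits no continuous primitive, Theorem~\ref{2.teorespectro} supplies $m\in\mmerg(\W,\sigma)$ with $\int_\W a\,dm=0$, and together with the above bound this shows $a\in\mR_m(\W)$ (Definition~\ref{2.defR}). Theorem~\ref{3.teorlineal} then provides a full-$m$-measure $\sigma$-invariant set $\W_l$ on which both $t\mapsto\alfaa(\wt)$ and $t\mapsto\betaa(\wt)$ remain in $[r_1,r_2]$; there \eqref{3.familia} restricts to the affine equation $x'=a(\wt)x+b(\wt)$, so $\phi:=\betaa-\alfaa$ satisfies the exact identity $\phi(\wt)=\phi(\w)\exp\int_0^t a(\ws)\,ds$. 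Proposition~\ref{2.propR}(3) rewrites this as $\phi(\wt)/H_a(\wt)=\phi(\w)/H_a(\w)$ on $\W^a$, so $\phi/H_a$ is constant along $m$-almost every orbit, hence $m$-almost everywhere equal to a positive constant $c$ by ergodicity. To extract the contradiction, pick $\wit\w$ in the intersection of $\W_c$ with the continuity set of the upper semi-continuous function $H_a$ (both residual, so their intersection is dense): at $\wit\w$, $\phi$ is continuous with $\phi(\wit\w)>0$ (non-pinched), while $H_a(\wit\w)=0$ by Proposition~\ref{2.propR}. Approaching $\wit\w$ along a sequence $\w_n$ in the full-measure set (which is dense because $\Supp m=\W$), continuity at $\wit\w$ of both sides of $\phi(\w_n)=cH_a(\w_n)$ forces $\phi(\wit\w)=cH_a(\wit\w)=0$, contradicting $\phi(\wit\w)>0$. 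This contradiction step is where the bulk of the real work lies, establishing (ii).

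With $a$ admitting a continuous primitive, Theorem~\ref{3.teorexconpri}(ii)---applicable since \hyperlink{g4t}{\bf\~{g4}} implies \hyperlink{g4}{\bf g4}---forces every $\tau$-minimal set to be a copy of the base, so $\malfa=\{\alfaa\}$ and $\mbeta=\{\betaa\}$ with $\alfaa,\betaa$ continuous, giving $\W_c=\W$ (part (iii)). To place $\alfaa,\betaa$ in $[r_1,r_2]$: for a base-copy $\{\eta\}$ the upper Lyapunov exponent equals $\sup_m\int_\W(a(\w)+g_x(\w,\eta(\w)))\,dm$, and by \hyperlink{g4t}{\bf\~{g4}} the integrand is strictly negative wherever $\eta(\w)\notin[r_1,r_2]$, so the full support of any $\sigma$-ergodic measure forces the exponent to be strictly negative. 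Since at least one of $\malfa,\mbeta$ has exponent $0$, the other cannot be uniformly exponentially stable (by Proposition~\ref{2.propcopia} combined with the affine/non-pinched structure, which is inconsistent with a locally attracting minimal set coexisting with the $\mM^c$-family below); hence both have exponent $0$, so $\alfaa,\betaa\in[r_1,r_2]$, and $\malfa<\mbeta$ then gives (i) $r_1<r_2$. For (iv)--(v): the restriction of \eqref{3.familia} to $\W\times[r_1,r_2]$ is the affine equation $x'=a(\wt)x+b(\wt)$, whose flow is affine in $x$ and preserves convex combinations; for each $c\in[0,1]$ the function $\eta_c=c\alfaa+(1-c)\betaa$ is a continuous $\tau$-equilibrium valued in $[r_1,r_2]$, so its graph $\mM^c$ is a $\tau$-invariant copy of the base, hence $\tau$-minimal with zero upper Lyapunov exponent by Theorem~\ref{3.teorSup0-general}(iv). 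Conversely any $\tau$-minimal set is a copy of the base (Theorem~\ref{3.teorexconpri}(ii)), sandwiched between $\malfa$ and $\mbeta$, and by uniqueness of affine interpolation coincides with some $\mM^c$. Fiber-by-fiber $\mA_\w=[\alfaa(\w),\betaa(\w)]=\bigcup_{c\in[0,1]}\{\eta_c(\w)\}$, so $\mA=\bigcup_{c\in[0,1]}\mM^c$, proving (v); uniform stability of $(\mA,\tau)$ at $\pm\infty$ follows from the boundedness of $\exp\int_0^t a(\ws)\,ds$ on $\R\times\W$, guaranteed by the continuous primitive for $a$.
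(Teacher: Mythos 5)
Your derivation of $\malfa<\mbeta$, of a zero-exponent minimal set $\mM_0\subseteq\W\times[r_1,r_2]$, and above all of part (ii) is correct and takes a genuinely different route from the paper. The paper first proves, by a direct comparison/limit argument (the $c_0$-trick contradicting $0\in\Sigma_a$ via Remarks~\ref{2.notaDE} and \ref{2.notaED}.2), that $\malfa$ and $\mbeta$ lie in $\W\times[r_1,r_2]$, and only then reads off (ii) because the difference of the two covers solves \eqref{3.eclineal} and is bounded above and positively below (Remark~\ref{2.notacontpri}). You instead extract the one-sided bound $\sup_{t\le 0}\int_0^t a(\ws)\,ds<\infty$ from the contrapositive of Theorem~\ref{3.teorWc}(i), verify $a\in\mR_m(\W)$, and use Theorem~\ref{3.teorlineal}, Proposition~\ref{2.propR} and ergodicity to make $\betaa-\alfaa$ proportional ($m$-a.e.) to $H_a$, contradicting non-pinchedness at a point of $\W_c$ where the upper semicontinuous $H_a$ is continuous, hence vanishes. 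Every step there checks out ($\Supp m=\W$ by minimality, so full-measure sets are dense; $\W_c$ and the continuity set of $H_a$ are both residual), and this is arguably slicker than the paper's treatment of the same obstruction.

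The genuine gap is at the containment $\malfa,\mbeta\subseteq\W\times[r_1,r_2]$ needed for (i), (iv) and (v) --- exactly the step the paper calls the main one. Your $\mM_0$ may lie strictly between $\malfa$ and $\mbeta$, so ``at least one of $\malfa,\mbeta$ has exponent $0$'' does not follow from anything established; and your exclusion of the remaining configuration (say $\mbeta$ leaving the strip and therefore, by Theorem~\ref{3.teorSup0-general}(ii), being a hyperbolic attracting copy of the base coexisting with $\mM_0$) appeals to ``the $\mM^c$-family below'', which is circular: that family is constructed only after $\alfaa,\betaa$ are known to take values in $[r_1,r_2]$. Proposition~\ref{2.propcopia} by itself yields no contradiction either, since backward $\rho$-separation from an attracting copy is compatible with everything proved at that point. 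The gap is reparable in your ordering: once (ii) holds, Theorem~\ref{3.teorexconpri}(ii) makes every minimal set a continuous copy, say $\mM_0=\{\eta_0\}$ and $\mbeta=\{\eta_1\}$ with $\eta_0<\eta_1$; if $\eta_1(\bw)>r_2$ for some $\bw$, set $e(t):=\eta_1(\wt)-\eta_0(\wt)>0$ and $q(t):=e(t)\exp\big(-\int_0^t a(\ws)\,ds\big)$. By Remark~\ref{3.notalower}.3, $q$ is nonincreasing, and since $g(\wt,\eta_0(\wt))=0$ while $g(\cdot,\eta_1(\cdot))\le-\mu<0$ on a fixed ball around $\bw$, $q'\le-\mu/C$ there (with $C:=\sup_{t,\w}\exp\int_0^t a(\ws)\,ds<\infty$ by the continuous primitive); minimality forces infinitely many visits of uniform duration to that ball, so $q$ would become negative, impossible. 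Then both covers have exponent $0$ and Theorem~\ref{3.teorSup0-general}(iii) gives the containment. Two smaller slips: the pointwise claim that $a(\w)+g_x(\w,\eta(\w))$ is strictly negative off the strip is false ($a$ may be positive there); what holds, and suffices, is the integrated statement already proved in Theorem~\ref{3.teorSup0-general}(i)\&(ii). And passing from ``$\malfa,\mbeta$ are copies of the base'' to $\malfa=\{\alfaa\}$, $\mbeta=\{\betaa\}$ and $\W_c=\W$ needs a short semicontinuity/comparison argument (the paper glosses this as well).
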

\begin{proof}
The definitions of $\malfa$ and $\mbeta$ ensure that they are different, so that
they are fiber-ordered (see Subsection \ref{2.subsecescalar}): $\malfa<\mbeta$.
The main step of this proof is showing that both of them are contained in
$\W\times[r_1,r_2]$. Let us assume for the moment being that this is the
case, and let us see how to deduce all the assertions of the theorem.
\par
The existence of two different minimal sets contained in $\W\times[r_1,r_2]$
yields $r_1<r_2$, which is property (i).
Let us take $\w\in\W$, $(\w,x_\alpha)\in\malfa$ and $(\w,x_\beta)\in\mbeta$.
Then the map $t\mapsto x(t,\w,x_\beta)-x(t,\w,x_\alpha)$ solves
$x'=a(\wt)\,x$, and it is bounded and also positively bounded from
below. This implies that all the solutions of
$x'=a(\wt)\,x$ are bounded, for every $\w\in\W$, and hence
$a$ has a continuous primitive: see Remark \ref{2.notacontpri}.
This proves (ii). Theorem \ref{3.teorexconpri}(i) shows that $\malfa$ and
$\mbeta$ are copies of the base: the graphs of $\alfaa$ and $\betaa$, respectively.
Therefore, (iii) holds.
Now, let us take $c\in[0,1]$. It is easy to check
that $t\mapsto c\,\alfaa(\wt)+(1-c)\,\betaa(\wt)=\betaa(\wt)+
c\,(\alfaa(\wt)-\betaa(\wt))$ satisfies $x'=a(\wt)\,x+b(\wt)$.
Since its graph remains in $\W\times[r_1,r_2]$, where $g$ vanishes,
we conclude that
$x(t,\w,c\,\alfaa(\w)+(1-c)\,\betaa(\w))=c\alfaa(\wt))+(1-c)\betaa(\wt)$.
That is, the graph of $c\,\alfaa+(1-c)\,\betaa$ is $\tau$-invariant,
and hence it determines
a copy of the base: a $\tau$-minimal set $\mM^c$. And there
are no more $\tau$-minimal sets, as Theorem \ref{3.teortapas} implies:
any other one should be below $\malfa$ or above $\mbeta$,
impossible. Theorem \ref{3.teorSup0-general}(iii) shows that the
upper Lyapunov exponent of $\mM^c$ is 0, which completes the proof of
(iv). Finally, the decomposition of $\mA$ stated in (v) is an easy consequence
of (iv) and the definition of $\malfa$ and $\mbeta$; the
linearity follows from $\mA\subset\W\times[r_1,r_2]$;
and the uniform stability at $\pm\infty$ of the set $\mA$
for the flow $(\mA,\tau)$ follows from the linearity.
\par
So that the proof will be complete once we show that
$\malfa,\,\mbeta\subset\W\times[r_1,r_2]$. We work with
$\mbeta$, assuming for contradiction that this is not the case.
It follows from Theorem~\ref{3.teorSup0-general}(i)\&(ii) that
$\mbeta$ is a copy of the base (i.e.,~$\mbeta=\{\betaa\}$).
Then, there exists at least a $\tau$-minimal set $\mM$ contained in
$\W\times[r_1,r_2]$: if not, and according to
Theorem \ref{3.teorSup0-general}(i)\&(ii), any $\tau$-minimal set
has strictly negative upper Lyapunov exponent; and hence
Theorem \ref{3.teortodos-} ensures that there exists only one
$\tau$-minimal set, which is not the case.
Theorem \ref{3.teortapas}) ensures that $\malfa\le\mM<\mbeta$, so that
$\betaa\ge r_1$. Therefore, there exits $\bw\in\W$ with $\betaa(\bw)>r_2$.
We will make use of this point a few lines below.
\par
Now we will check that $\sup_{t\ge 0}x_l(t,\w,1)=\infty$  for any $\w\in\W$.
A similar argument has been used in the proof of Theorem \ref{3.teorlineal}.
Let $\alfam$ be the map appearing in the description \eqref{2.M} of the minimal
set $\mM\subset\W\times[r_1,r_2]$.
We take a point $\ww\in\W$ of continuity of $\alfam$ (and, of course, of $\betaa$),
and look for $(t_n)\uparrow\infty$ such that $\lim_{n\to\infty}
\wt_n=\ww$. Remark~\ref{3.notalower}.3 ensures that
any $c>0$ determines the lower solution
$t\mapsto c\,(\betaa(\wwt)-\alfam(\wt))$ of the linear equation $z'=a(\wt)\,z$,
and hence that $x_l(t_n,\w,\betaa(\w)-\alfam(\w))\ge \betaa(\wt_n)-\alfam(\wt_n)>
\inf_{\w\in\W}(\betaa(\w)-\alfam(\w))>0$. Let us assume (for contradiction) that
$\lim_{k\to\infty}x_l(t_k,\w,\betaa(\w)-\alfaa(\w))=
c_0\,(\betaa(\ww)-\alfaa(\ww))<\infty$ for certain subsequence $(t_k)$,
so that $c_0>0$. Since $\betaa$ is continuous, $\betaa(\bw)>r_2$, and
$(\W,\sigma)$ is minimal, we can find $t_0>0$ such that $\betaa(\ww{\cdot}t_0)>r_2$.
This property ensures that $(d/dt)(\betaa(\ww{\cdot}t_0)-\alfam(\ww{\cdot}t_0))
<a(\ww{\cdot}t_0)(\betaa(\ww{\cdot}t_0)-\alfam(\ww{\cdot}t_0))$, which in turn provides $\ep>0$ and $t_*>t_0$ such that
\[
\begin{split}
 &(c_0+\ep)(\betaa(\ww{\cdot}t_*)-\alfam(\ww{\cdot}t_*))\\
 &\qquad\qquad< x_l(t_*-t_0,\ww{\cdot}t_0,c_0\,(\betaa(\ww{\cdot}t_0)-\alfam(\ww{\cdot}t_0)))\\
 &\qquad\qquad\le x_l(t_*-t_0,\ww{\cdot}t_0,x_l(t_0,\ww,c_0\,(\betaa(\ww)-\alfam(\ww))))\\
 &\qquad\qquad=x_l(t_*,\ww,c_0\,(\betaa(\ww)-\alfam(\ww)))\,.
\end{split}
\]
The second inequality follows again from Remark \ref{3.notalower}.3. This strict
inequality combined with $\ww=\lim_{t\to\infty}\wt_n$ and with the
definition of $c_0$ allow us to take a point $t_{k_0}$ of the sequence with
$(c_0+\ep)(\betaa(\w{\cdot}(t_{k_0}+t_*))-\alfam(\w{\cdot}(t_{k_0}+t_*)))<
x_l(t_*,\w{\cdot}t_{k_0},x_l(t_{k_0},\w,\betaa(\w)-\alfam(\w)))=x_l(t_*+t_{k_0},\w,
\betaa(\w)-\alfaa(\w))$. Now, we write $t_k=t_*+t_{k_0}+s_k$ with $s_k>0$
for large enough $k$. Then,
\[
\begin{split}
 &x_l(t_k,\w,\betaa(\w)-\alfam(\w))\\
 &\qquad\qquad =
 x_l(s_k,\w{\cdot}(t_*+t_{k_0}),
 x_l(t_*+t_{k_0},\w,\betaa(\w)-\alfam(\w)))\\
 &\qquad\qquad >
 x_l(s_k,\w{\cdot}(t_*+t_{k_0}),(c_0+\ep)(\betaa(\w{\cdot}(t_{k_0}+t_*))-
 \alfam(\w{\cdot}(t_{k_0}+t_*))))\\
 &\qquad\qquad\ge (c_0+\ep)(\betaa(\wt_k)-\alfam(\wt_k))\,.
\end{split}
\]
We have used once more Remark \ref{3.notalower}.3 in the last inequality.
Recall that $\betaa-\alfam$ is continuous at $\ww$.
Taking limits as $k\to\infty$ we get $c_0\ge c_0+\ep$, impossible.
The conclusion is that $\lim_{n\to\infty}x_l(t_n,\w,\betaa(\w)-\alfam(\w))=\infty$,
and hence that $\lim_{n\to\infty}x_l(t_n,\w,1)=\infty$, as asserted.
\par
The previous contradiction shows that
$\sup_{t\ge 0}x_l(t,\w,1)=\infty$ for all $\w\in\W$.
However, since $0\in\Sigma_a$, there exists
at least a point $\ww\in\W$ such that
$\sup_{t\in\R}x_l(t,\ww,1)<\infty$: see
Remarks~\ref{2.notaDE} and \ref{2.notaED}.2. This new contradiction
shows that our initial assumption cannot hold. That is,
$\mbeta$ is contained in $\W\times[r_1,r_2]$.
An analogous argument shows that also
$\malfa$ is contained in $\W\times[r_1,r_2]$.
\end{proof}
The definitions of set of complete measure and of chain recurrent flow, appearing in
the next statement, are given in Subsection \ref{2.subsecbasic}.
\begin{teor}\label{3.teorSup0-pinched}
Assume that $g$ satisfies \hyperlink{g1}{\bf g1}, \hyperlink{g2}{\bf g2},
\hyperlink{g3}{\bf g3} and \hyperlink{g4t}{\bf\~g4},
let $(\W\times\R,\tau)$ be the flow induced by the
family \eqref{3.familia}, let $\mA$, $\alpha_{\mA}$ and
$\betaa$ be provided by Theorem~{\rm\ref{3.teorexisteA}},
and let $\W_c$, $\malfa$ and $\mbeta$ be defined in Theorem~{\rm \ref{3.teortapas}}.
Assume also that $\sup\Sigma_a=0$, and that $\mA$ is a pinched set.
Then, $\W_c=\{\w\in\W\,|\;\alfaa(\w)=\betaa(\w)\}$, and
$\mM:=\malfa=\mbeta$ is the unique $\tau$-minimal set. In addition,
\begin{itemize}
\item[\rm(i)] $\mM\not\subset\W\times[r_1,r_2]$ if and only if
there exists $\w\in\W_c$ such that $\alfaa(\w)<r_1$ or $\betaa(\w)>r_2$. In
this case,  $\mA=\mM$ is a uniformly exponentially stable at $+\infty$
copy of the base: the graph of the continuous function
$\alfaa=\betaa$.
\item[\rm(ii)] If $\mM\subseteq\W\times[r_1,r_2]$, then
its upper Lyapunov exponent is $0$.
\item[\rm(iii)] If $\mM\subseteq\W\times[r_1,r_2]$ and the map $a$ has a continuous
primitive, then $\mA=\mM=\{\alfaa\}=\{\betaa\}$. In addition, in
this case, $\inf\{\alfaa(\w)\,|\;\w\in\W\}=r_1$
and $\sup\{\alfaa(\w)\,|\;\w\in\W\}=r_2$.
\item[\rm(iv)] If $r_1=r_2=:r$ and $\mM=\{r\}$, then $\W_c$ has complete measure.
%\item[\rm(v)] If $r_1<r_2$, $\mM\subset\W\times[r_1,r_2]$ and either
%$\inf\{\alfaa(\w)\,|\;\w\in\W_c\}>r_1$ or $\sup\{\betaa(\w)\,|\;\w\in\W_c\}<r_2$,
%then the map $a$ does not admit a continuous primitive, $\mM
%\varsubsetneq\mA$, and $\W_c\varsubsetneq\W$.
\item[\rm(v)] If $r_1<r_2$, and either $\mM\subset\W\times[r_1,r_2)$
or $\mM\subset\W\times(r_1,r_2]$,
then the map $a$ does not admit a continuous primitive, $\mM
\varsubsetneq\mA$, and $\W_c\varsubsetneq\W$.
\item[\rm(vi)] The restricted flow $(\mA,\tau)$ is chain recurrent.
\end{itemize}
\end{teor}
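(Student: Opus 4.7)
The plan is to show that, for every pair $(\w,x),(\ww,\wit x)\in\mA$ and every $\varepsilon>0,\,t_0>0$, a two-jump $(\varepsilon,t_0)$-chain from $(\w,x)$ to $(\ww,\wit x)$ can be built by routing through the pinched fiber. Since $\mA$ is pinched and, as already established in the theorem, $\W_c=\{\w\in\W\,|\;\alfaa(\w)=\betaa(\w)\}$, there exists $\w_*\in\W_c$ with $\mA_{\w_*}=\{\alfaa(\w_*)\}$. The joint continuity of $\alfaa$ and $\betaa$ at $\w_*$ gives, for any $\delta>0$, a neighbourhood $\mU$ of $\w_*$ in $\W$ such that every $(\w',y)\in\mA$ with $\w'\in\mU$ satisfies $\dist_{\W\times\R}((\w',y),(\w_*,\alfaa(\w_*)))<\delta$.

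I would pick the intermediate point of the chain as follows. First, the minimality of $(\W,\sigma)$ furnishes $T_1>t_0$ with $\sigma_{T_1}(\w)\in\mU$, so that $\tau(T_1,\w,x)\in\mA$ is within $\varepsilon/2$ of $(\w_*,\alfaa(\w_*))$. Second, by Theorem~\ref{3.teorexisteA}(iii) the full orbit of $(\ww,\wit x)$ is globally defined and bounded, hence its backward semiorbit is relatively compact in $\mA$; its alpha-limit set is therefore a nonempty compact $\tau$-invariant subset of $\mA$. Projecting onto $\W$ yields a nonempty, closed, $\sigma$-invariant set, which by minimality must be all of $\W$; consequently the alpha-limit set contains a point over $\w_*$, which must be $(\w_*,\alfaa(\w_*))$ by the singleton property. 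Hence there is a sequence $s_n\downarrow-\infty$ with $\tau(s_n,\ww,\wit x)\to(\w_*,\alfaa(\w_*))$, and selecting $n$ large enough so that $T_2:=-s_n>t_0$ and $\dist_{\W\times\R}(\tau(s_n,\ww,\wit x),(\w_*,\alfaa(\w_*)))<\varepsilon/2$ gives an intermediate point $(\w_1,x_1):=\tau(-T_2,\ww,\wit x)\in\mA$ with the desired properties.

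The chain is then $(\w,x)\to(\w_1,x_1)\to(\ww,\wit x)$ using the times $T_1$ and $T_2$, both above $t_0$. The first jump has length at most $\varepsilon/2+\varepsilon/2=\varepsilon$ by the triangle inequality centered at $(\w_*,\alfaa(\w_*))$, while the second jump is exact because $\tau(T_2,\w_1,x_1)=(\ww,\wit x)$ by construction; this establishes chain recurrence of $(\mA,\tau)$.

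The main obstacle, and the only genuinely conceptual step, is verifying that every point of $\mA$ has an alpha-limit set reaching the pinched fiber. This rests on two facts pulled from earlier in the paper: Theorem~\ref{3.teorexisteA}(iii), which keeps backward orbits of points of $\mA$ inside the compact set $\mA$ (so the alpha-limit set is nonempty and contained in $\mA$), and the minimality of the base flow, which forces the projection of any nonempty closed $\sigma$-invariant subset of $\W$ onto $\W$ to be everything. Without the pinched hypothesis there would be no distinguished fiber through which to funnel the chain; with it, the intermediate waypoint $(\w_*,\alfaa(\w_*))$ becomes a universal connector between any two points of $\mA$.
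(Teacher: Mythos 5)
Your proposal proves only one of the theorem's assertions. The statement consists of a preamble (the identity $\W_c=\{\w\in\W\,|\;\alfaa(\w)=\betaa(\w)\}$ and the uniqueness of the $\tau$-minimal set $\mM=\malfa=\mbeta$) together with six items (i)--(vi), and everything you write is aimed at (vi), chain recurrence. Worse, you invoke the preamble \lq\lq as already established in the theorem,"~which is circular: it is part of what must be proved (it does follow quickly, via Theorem \ref{3.teortapas}(ii) and the observation that every minimal set passes through the points $(\w,\alfaa(\w))$ for $\w\in\W_c$, but you must say so). Nothing in your argument addresses (i)--(ii), which in the paper are immediate from Theorem \ref{3.teorSup0-general}; nor (iii), proved by rerunning the argument of Theorem \ref{3.teorexconpri} with $\betaa-\alfaa$ in place of $\betam-\alfam$; nor (iv), which needs the change of variables $y=x-r$ and the measure-theoretic argument borrowed from \cite{obsa8}; nor (v), which rests on Remark \ref{2.notaED}.2 (the failure of exponential dichotomy when $0\in\Sigma_a$ yields a point $\bw$ with $\sup_{t\in\R}\int_0^t a(\bw{\cdot}s)\,ds<\infty$) combined with Theorem \ref{3.teorWc}(ii). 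As a proof of the stated theorem, the proposal therefore has a substantial gap: five of the six items plus the preliminary assertions are simply missing.

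For (vi) itself, your argument is correct, and it differs mildly from the paper's. The paper routes chains through the unique minimal set $\mM$, using that $\mM$ is contained in the omega limit set of every point and in the alpha limit set of every point of $\mA$, with a three-case analysis depending on whether the endpoints lie in $\mM$; as Remark \ref{3.notachain} notes, that argument actually proves the general fact that any flow on a compact metric space with a unique minimal set is chain recurrent. You instead funnel every chain through the single point $(\w_*,\alfaa(\w_*))$ over a pinched fiber, obtaining a uniform two-jump chain. Your supporting steps all check out: lower semicontinuity of $\alfaa$ and upper semicontinuity of $\betaa$ squeeze both covers to continuity at $\w_*$, so nearby sections of $\mA$ collapse near $(\w_*,\alfaa(\w_*))$; the $\tau$-invariance of $\mA$ keeps backward semiorbits inside the compact set $\mA$, so alpha limit sets are nonempty, compact and $\tau$-invariant; and the projection of such a set is a compact $\sigma$-invariant subset of the minimal base, hence all of $\W$, so the alpha limit set meets the singleton fiber over $\w_*$. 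This version leans on pinchedness rather than on uniqueness of $\mM$, so it is slightly less general than the paper's, but it is perfectly adequate here --- it just cannot stand in for the rest of the proof.
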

\begin{proof}
The equality $\W_c=\{\w\in\W\,|\;\alpha(\w)=\beta(\w)\}$
is proved in Theorem \ref{3.teortapas}(ii).
Hence, clearly, for all $\w\in\W_c$, the points $(\w,\alfaa(\w))=
(\w,\betaa(\w))$ belong to any $\tau$-minimal set, and this fact
combined with the definition of $\malfa$ (see Theorem \ref{3.teortapas})
ensures that $\mM:=\malfa$ contains any $\tau$-minimal set. Hence, it is the
unique one.
\smallskip\par
(i)\&(ii) These assertions follow immediately from Theorem \ref{3.teorSup0-general}.
\smallskip\par
(iii) We repeat step by step the proof of Theorem \ref{3.teorexconpri},
working with the map $\betaa-\alfaa$ instead of $\betam-\alfam$.
The conclusion is that $\alfaa$ and $\betaa$ are equal, so that they are continuous
and determine the copy of the base $\mA=\mM$. The last assertion
in (iii) is trivial if $r_1=r_2$ and follows from (v)
(which will be proved independently) if $r_1<r_2$.
\smallskip\par
(iv) This assertion follows from (iii) if $a$ has a
continuous primitive: in this case, $\alfaa=\betaa\equiv r$ and
$\W_c=\W$. Assume that this
is not the case. Then, the change of variables
$y=x-r$ takes \eqref{3.familia} to the purely dissipative family
$y'=a(\wt)\,y+g(\wt,y+r)$ with linear homogeneous part,
for which $\W\times\{0\}$ is the unique minimal set and the
attractor is $\bigcup_{\w\in\W}\{\w\}\times[\alfaa(\w)-r,\betaa(\w)-r]$.
Following the arguments of \cite[Theorem 5.10]{obsa8}, we prove that
the set of points $\w$ at which $\alfaa(\w)-r=\beta(\w)-r=0$
has complete measure. In fact, \cite{obsa8} is devoted to scalar parabolic
partial differential equations, but ours can be understand as one of that type;
and also a symmetric condition is assumed there on $g$, but this condition
does not imply differences in the arguments we refer to, which can be
repeated for $\alfaa-r$ and for $\betaa-r$. Therefore, coming
back to our initial family, we have that $\alfaa$ and $\betaa$ coincide
(and take the value $r$) on a set of complete measure which, as seen at
the beginning of the proof, is $\W_c$.
\smallskip\par
%(v) The unique minimal set is
%$\mM=\text{\rm closure}_{\W\times\R}\{(\wt,\alfaa(\wt))\,|
%\;t\in\R\}=\text{\rm closure}_{\W\times\R}\{(\wt,\betaa(\wt))\,|
%\;t\in\R\}$ for $\w\in\W_c$: see Theorem \ref{3.teortapas}.
%Therefore, the conditions in (v) imply those of
%Theorem \ref{3.teorWc}, which ensures all the assertions.
%\smallskip\par
(v) Since $0\in\Sigma_a$, there exists $\bw\in\W$ such that
$\sup_{t\in\R}\int_0^ta(\bw{\cdot}s)\,ds<\infty$: see
Remarks~\ref{2.notaDE} and \ref{2.notaED}.2.
Theorem \ref{3.teorWc}(ii) ensures that $\mM\varsubsetneq\mA$ and
that $\alfaa(\bw)<\betaa(\bw)$, which ensures that $\W_c\varsubsetneq\W$.
Theorem \ref{3.teorWc} also shows that
$\W_c=\{\w\in\W\,|\; \sup_{t\le 0}\int_0^t a(\ws)\,ds=\infty\}$,
which precludes the existence of continuous primitive for $a$
(since $\W_c$ is nonempty). This completes the proof of (v).
\smallskip\par
\smallskip\par
(vi) Let us fix two points $(\w,x)$ and $(\ww,\wit x)$ in $\mA$,
$\varepsilon>0$, $t_0>0$, and consider three cases which exhaust the
possibilities.
\par
If $(\ww,\wit x)\in\mM$, then
it belongs to the omega limit set of $(\w_1,x_1):=\tau(t_0,\w,x)$,
and hence there exists $t_1>t_0$ such that
$\dist_{\W\times\R}(\tau(t_1,\w_1,x_1),(\ww,\wit x))<\ep$.
The definition of chain recurrence is fulfilled for the chain
$(\w_0,x_0):=(\w,x),\,(\w_1,x_1)$ and $(\w_2,x_2):=(\ww,\wit x)$
(and the times $t_0$ and $t_1$).
\par
Assume that $(\w,x)\in\mM$. We call $(\w_1,x_1):=\tau(t_0,\w,x)\in\mM$, choose
$t_1>t_0$, and observe that $\tau(t_1,\w_1,x_1)$ belongs to the alpha limit set of
$(\ww,\wit x)$, since it belongs to $\mM$.
We take $t_2>t_0$ such that
$\dist_{\W\times\R}(\tau(-t_2,\wit \w,\wit x),\tau(t_1,\w_1,x_1))<\ep$
and call $(\w_2,x_2):=\tau(-t_2,\ww,\wit x)$, so that
$\tau(t_2,\w_2,x_2)=(\ww,\wit x)$.
The definition of chain recurrence is fulfilled for the chain
$(\w_0,x_0):=(\w,x),\,(\w_1,x_1),\,(\w_2,x_2)$ and $(\w_3,x_3):=(\ww,\wit x)$
(and the times $t_0$, $t_1$, and $t_2$).
\par
Finally, if none of the points belongs to $\mM$, we construct the chain from
$(\w,x)$ to $(\bw,\bar x)$ through any point $(\bw,\bar x)\in\mM$.
This completes the proofs of (vi) and of the theorem.
\end{proof}
\begin{nota}
In the purely dissipative case considered in point (iv),
the set $\W_c$ can be $\W$ (and hence the attractor agrees with
$\{r\}$). This is the simplest situation. But it is
also possible that $\W_c\varsubsetneq\W$, in which case
the dynamics is much more complex
An example of this is given by
the family obtained by the hull procedure (explained in the Introduction)
from the equation $x'=(1/2)(a(t)\,x-x^3)$, where $a(t)=\wit a(-t)$ for
an almost periodic function $\wit a\colon\R\to\R$ with
zero mean value and whose integral $\int_0^t \wit a(s)\,ds$
grows like $t^\mu$ as $t$ increases, for some $0<\mu<1$.
The interested reader is referred to \cite[Example 5.13]{clos}
for the details, as well as for references in which functions
$\wit a$ with the required properties are constructed.
\end{nota}
\begin{nota}\label{3.notachain}
By reviewing the proof of Theorem \ref{3.teorSup0-pinched}(vi), we observe that
the property is general: any flow on a compact metric space
admitting a unique minimal set is chain recurrent.
\end{nota}
The framework of the next theorems, concerning the presence of chaos,
is that of point (v) of the previous one. In particular, it requires the family
\eqref{3.familia} to be in the linear dissipative case (i.e., $r_1<r_2$).
The nonempty set $\mR_m(\W)$ is described in Subsection \ref{2.subsecR}: Theorem
\ref{2.teornovacio} ensures the existence of functions $a\in\mR_{m}(\W)$
with $\Sigma_a=0$ for any $m\in\mminv(\W,\sigma)$.
Recall that when $\mA$ is pinched, there
exists just one $\tau$-minimal set: see Theorem~\ref{3.teorSup0-pinched}.
The scope of the properties
stated in these two results is analyzed after their proofs.
\begin{teor}\label{3.teorSup0-chaosLY}
Assume that $g$ satisfies \hyperlink{g1}{\bf g1}, \hyperlink{g2}{\bf g2},
\hyperlink{g3}{\bf g3} and \hyperlink{g4t}{\bf\~g4},
let $(\W\times\R,\tau)$ be the flow induced by the family \eqref{3.familia},
let $\mA$, $\alpha_{\mA}$ and $\betaa$ be provided by Theorem~{\rm\ref{3.teorexisteA}},
and let $\W_c\subseteq\W$ be the residual set provided by Theorem~{\rm \ref{3.teortapas}}.
Assume also that $r_1<r_2$ and that
\begin{itemize}
%\item[-] $\mA$ is a pinched set, the only
%$\tau$-minimal set $\mM$ is contained in $\W\times[r_1,r_2]$, and either
%$\inf\{\alfaa(\w)\,|\;\w\in\W_c\}>r_1$ or $\sup\{\betaa(\w)\,|\;\w\in\W_c\}<r_2$;
\item[-] $\mA$ is a pinched set, and the only
$\tau$-minimal set $\mM$ is contained either in $\W\times[r_1,r_2)$
or in $\W\times(r_1,r_2]$;
\item[-] $\sup\Sigma_a=0$ and $a\in\mR_{m}(\W)$ for a measure
$m\in\mmerg(\W,\sigma)$.
\end{itemize}
Then,
\begin{itemize}
\item[\rm(i)] $m(\W_c)=0$, and the restricted flow $(\mA,\tau)$ is
chain recurrent and Li-Yorke chaotic in measure with respect to $m$.
More precisely, the $\sigma$-invariant set subset $\W_{LY}\subseteq\W$ of
points $\w$ such that the section $\mA_\w$ is a nondegenerate interval
and the set $\{\w\}\times\mA_\w$ is scrambled,
satisfies $\W_{LY}\subseteq\W-\W_c$ and $m(\W_{LY})=1$.
\item[\rm(ii)] For every
$\ep\in(0,1)$ there exists a subset $\W_\ep\subseteq\W_{LY}$ with $m(\W_\ep)=1$
such that, for any $\w\in\W_\ep$, the set
\[
 \qquad \{t\ge 0\,|\;|x(t,\w,x_2)-x(t,\w,x_1)|\le \ep\,|x_2-x_1|\;\text{ if
 $\;(\w,x_1),\,(\w,x_2)\in\mA$}\}
\]
has positive lower density and is relatively dense in $\R^+$; and the set
\[
 \;\,\quad\qquad \{t\ge 0\,|\;|x(t,\w,x_2)-x(t,\w,x_1)|\ge (1-\ep)\,|x_2-x_1|\;
 \text{\,if $\,(\w,x_1),\,(\w,x_2)\in\mA$}\}
\]
has positive lower density.
\end{itemize}
\end{teor}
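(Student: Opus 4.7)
The plan is to exploit Theorem~\ref{3.teorSup0-pinched}, Theorem~\ref{3.teorlineal}, and the observation that on $\W\times[r_1,r_2]$ the term $g$ vanishes, so there the flow reduces to the nonhomogeneous linear flow $x'=a(\wt)x+b(\wt)$. First I would collect the free inputs: Theorem~\ref{3.teorSup0-pinched} supplies the unique $\tau$-minimal set $\mM$, the chain recurrence of $(\mA,\tau)$ (part (vi)), $\W_c\varsubsetneq\W$ (part (v)), and the absence of a continuous primitive for $a$. Theorem~\ref{3.teorlineal}, whose hypotheses all hold, yields a $\sigma$-invariant $\W_l$ with $m(\W_l)=1$ on which $\alfaa(\wt),\betaa(\wt)\in[r_1,r_2]$ for every $t\in\R$; and Proposition~\ref{2.propR}, applied to $a\in\mR_m(\W)$, produces the upper semicontinuous $H_a\colon\W\to[0,1]$ and the $\sigma$-invariant set $\W^a=\{H_a>0\}$ with $m(\W^a)=1$. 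By compactness, in the case $\mM\subset\W\times[r_1,r_2)$ (the other being symmetric), I fix $\delta>0$ with $\mM\subset\W\times[r_1,r_2-\delta]$.

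The key step is the linear separation formula: for $\w\in\W^a\cap\W_l$, both $t\mapsto\alfaa(\wt)$ and $t\mapsto\betaa(\wt)$ satisfy $x'=a(\wt)x+b(\wt)$ on all of $\R$, so their difference solves the homogeneous equation and Proposition~\ref{2.propR} gives
\[
 \betaa(\wt)-\alfaa(\wt)=(\betaa(\w)-\alfaa(\w))\,\frac{H_a(\wt)}{H_a(\w)}\,.
\]
Applied to any two points $(\w,x_1),(\w,x_2)\in\{\w\}\times\mA_\w$, whose orbits stay in $[\alfaa(\wt),\betaa(\wt)]\subseteq[r_1,r_2]$ by fiber-monotonicity, the same reasoning yields the identity
\[
 |x(t,\w,x_2)-x(t,\w,x_1)|=|x_2-x_1|\,\frac{H_a(\wt)}{H_a(\w)}\quad\text{for all }t\in\R.
\]

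The main obstacle is proving $m(\W_c)=0$, which I would derive from the stronger statement $\W_c\cap\W^a=\emptyset$ by a perturbation argument. Suppose $\w\in\W_c\cap\W^a$; then $\alfaa(\wt)=\betaa(\wt)\in\mM_{\wt}\subseteq[r_1,r_2-\delta]$ for every $t\in\R$. For $\ep\in(0,\delta)$, set $\wit\beta(\wt):=\alfaa(\wt)+\ep\,H_a(\wt)$; since $H_a\le 1$ this lies in $[r_1,r_2]$, so $\wit\beta$ is a globally defined bounded solution of the nonhomogeneous linear equation and, because $g\equiv 0$ on $\W\times[r_1,r_2]$, of \eqref{3.familia}$_\w$ itself. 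By Theorem~\ref{3.teorexisteA}(iii), $(\w,\wit\beta(\w))\in\mA$; but $H_a(\w)>0$ forces $\wit\beta(\w)>\alfaa(\w)=\betaa(\w)$, contradicting $\mA_\w=[\alfaa(\w),\betaa(\w)]$. Hence $\W_c\cap\W^a=\emptyset$, and from $m(\W^a)=1$ one concludes $m(\W_c)=0$.

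With $m(\W_c)=0$ in hand, the remainder is routine. For $\w$ in the full-measure set $\W^a\cap\W_l$ the fiber $\mA_\w$ is a nondegenerate interval, and the identity above gives $\liminf_{t\to\infty}|x(t,\w,x_2)-x(t,\w,x_1)|=0$ from $\inf_{t\ge 0}H_a(\wt)=0$ (Proposition~\ref{2.propR}), while $\limsup_{t\to\infty}H_a(\wt)>0$ for $m$-a.e.~$\w$ follows from Birkhoff's theorem applied to the indicator of the positive-measure set $\{H_a\ge c\}$ for small $c>0$. Thus $\{\w\}\times\mA_\w$ is an uncountable scrambled set for such $\w$, so $m(\W_{LY})=1$ with $\W_{LY}\subseteq\W-\W_c$ by definition, and the chain recurrence of $(\mA,\tau)$ is Theorem~\ref{3.teorSup0-pinched}(vi). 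Part (ii) is an immediate translation of Theorem~\ref{2.teordensidad} through the separation identity: the sets $\mI_\ep(\w)$ and $\mD_\ep(\w)$ in \eqref{2.dist} coincide with the sets of times displayed in the statement, so the stated positive lower density, together with the relative density for $\mI_\ep$, is supplied on a full-measure $\W_\ep\subseteq\W_{LY}$.
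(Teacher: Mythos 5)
Your proposal is correct and follows essentially the same route as the paper: chain recurrence from Theorem \ref{3.teorSup0-pinched}(vi), reduction to the homogeneous linear flow on the full-measure set $\W_l$ of Theorem \ref{3.teorlineal}, the separation identity $|x(t,\w,x_2)-x(t,\w,x_1)|=|x_2-x_1|\,H_a(\wt)/H_a(\w)$ from Proposition \ref{2.propR}, and Theorem \ref{2.teordensidad} for part (ii). The only deviations are local and harmless: where the paper obtains $m(\W_c)=0$ by citing Theorem \ref{3.teorWc}(ii), you re-derive it inline via the perturbation $\alfaa+\ep\,H_a$ (which is exactly the mechanism of that theorem's proof), you get non-distality from $\inf_{t\ge 0}H_a(\wt)=0$ rather than from convergence to a pinched fiber over a point of $\W_c$, and you replace the paper's Lusin-plus-Birkhoff step for non-asymptoticity by Birkhoff's theorem applied to the characteristic function of the closed positive-measure set $\{\w\,|\;H_a(\w)\ge c\}$ -- all equivalent in substance.
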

\begin{proof}
%Note that Theorem~\ref{3.teorWc} shows that $\W_c\ne\W$.
%\smallskip\par
(i) The chain recurrence of $(\mA,\tau)$ is guaranteed by Theorem
\ref{3.teorSup0-pinched}(vi). Let us take $\w\in\W-\W_c$ and a pair of points
$(\w,x_1),(\w,x_2)\in\mA$ with $x_1\ne x_2$, choose $\w_0\in\W_c$,
recall that $\mA_{\w_0}=\{x_0\}$, and choose a suitable sequence $(t_n)$ such that
$\lim_{n\to\infty}\wt_n=\w_0$ and there exist the two limits
$x(t_n,\w,x_1)$ and $x(t_n,\w,x_2)$. These limits must coincide with $x_0$,
so that any pair of points of $\mA$ sharing the first component form a non positively
distal pair.
\par
To look for non positively asymptotic pairs requires some more work.
Recall that, if $a\in\mR_{m}(\W)$, then
$\sup_{t\in\R}\exp\big(\int_0^ta(\ww{\cdot}s)\,ds\big)<\infty$
whenever $\w$ belongs to a $\sigma$-invariant set
$\W^a\subset\W$ with $m(\W^a)=1$: see Proposition \ref{2.propR}.
Theorem \ref{3.teorWc}(ii) ensures that $\W^a\subseteq\W-\W_c$, and
and hence $m(\W_c)=0$. Theorem \ref{3.teorlineal} shows that also the
$\sigma$-invariant set $\W_l\subseteq\W^a$ defined by \eqref{3.Omegal}
satisfies $m(\W_l)=1$.
Let us take $(\w,x_1),(\w,x_2)\in\mA$ with $\w\in\W_l$.
Then, $t\mapsto x(t,\w,x_1)-x(t,\w,x_2)$ solves
$x'=a(\wt)\,x$, and hence
\begin{equation}\label{3.paravi}
\begin{split}
 &|x(t,\w,x_1)-x(t,\w,x_2)|=|x_l(t,\w,x_1-x_2)|\\
 &\qquad\qquad=
 |x_1-x_2|\exp\int_0^t a(\ws)\,ds=|x_1-x_2|\,\frac{H_a(\wt)}{H_a(\w)}\,,
\end{split}
\end{equation}
where $H_a\colon\W\to[0,1]$ is the bounded function associated to $a$ by
Proposition \ref{2.propR}.
Lusin's theorem and the regularity of $m$ provide a compact set
$\mK\subset\W_l$ with positive measure such that
the restriction of $H_a$ to $\mK$ is continuous, and Birkhoff's ergodic theorem
provides a set $\W^0\subseteq\W$ with $m(\W^0)=1$
such that, if $\w\in\W^0$, then there exists $(t_n)\uparrow\infty$
such that $\wt_n\in\mK$ for all $n\in\N$.
In particular, $\W^0\subseteq\W^l$, since $\W^l$ is $\sigma$-invariant.
Since $H_a$ is globally bounded
and strictly positive at the points of $\W^a\supseteq\W_l$ (see again
Proposition \ref{2.propR}), and
continuous on $\mK\subset\W_l$, we conclude that there exists $\kappa_\w>0$ such that,
whenever $\w\in\W^0$ and $x_1,x_2$ are two different points
of the nondegenerate interval $\mA_{\w}$,
\[
 |x(t_n,\w,x_1)-x(t_n,\w,x_2)|\ge \kappa_\w\,|x_1-x_2|>0
\]
for a sequence $(t_n)\uparrow\infty$.
This shows that $(\w,x_1)$ and $(\w,x_2)$ form a non positively
asymptotic pair.
\par
Altogether, we have proved that the set $\mA_{\w}$ is a nondegenerate
interval with $\{\w\}\times\mA_\w$ scrambled for $m$-almost all $\w\in\W$.
The $\sigma$-invariance of the set $\W_{LY}\supseteq\W^0$
formed by these points is a clear consequence of the definition of scrambled set,
and this completes the proof of (i).
\smallskip\par
(ii) Equality \eqref{3.paravi} and the definitions \eqref{2.dist}
of the sets $\mI_\ep(\w)$ and $\mU_\ep(\w)$ associated to the function $a$
show that, if $\w\in\W_{AY}\cap\W_l$, then
\[
\begin{split}
 &t\in\mI_\ep(\w),\;(\w,x_1),\,(\w,x_2)\in\mA\;\Rightarrow\;
 |x(t,\w,x_1)-x(t,\w,x_2)|\le \ep\,|x_1-x_2|\,,\\
  &t\in\mD_\ep(\w),\;(\w,x_1),\,(\w,x_2)\in\mA\;\Rightarrow\;
 |x(t,\w,x_1)-x(t,\w,x_2)|\ge (1-\ep)\,|x_1-x_2|\,.
\end{split}
\]
Therefore, Theorem~\ref{2.teordensidad} proves (ii).
\end{proof}
\begin{teor}\label{3.teorSup0-chaosAY}
Assume the same hypotheses as in Theorem {\rm\ref{3.teorSup0-chaosLY}},
and let $\W_l$ be the $\sigma$-invariant set with
$m(\W_l)=1$ defined by \eqref{3.Omegal}.
Let us define $\eta_c=c\,\alfaa+(1-c)\,\betaa$ for $c\in[0,1]$. Then,
\begin{itemize}
\item[\rm(i)] $\displaystyle{\int_{\mA}h(\w,x)\,d\mu_c:=\int_\W h(\w,\eta_c(\w))\,dm}$ for
 $h\in C(\mA,\R)$ defines a regular Borel $\tau$-ergodic measure
 $\mu_c$ concentrated on $\mA$.
\end{itemize}
Let us define $\mS_c:=\Supp\mu_c$ for $c\in[0,1]$. Then,
\begin{itemize}
\item[\rm(ii)] there exists a $\sigma$-invariant set
$\W_{AY}\subseteq\W_l$ with $m(\W_{AY})=1$ such that
$(\bw,\eta_c(\bw))\in\mS_c$ and
\begin{equation}\label{3.Sc}
 \mS_c=\mO_\tau(\bw,\eta_c(\bw))
\end{equation}
for all $\bw\in\W_{AY}$ and $c\in[0,1]$. In particular,
$(\mS_c,\tau)$ is a transitive flow on a pinched compact set for
any $c\in[0,1]$.
\item[\rm(iii)] One of the following situations holds:
\begin{itemize}
\item[\rm(1)] $\mM$ is a copy of the base, in which case there exists just
a $c_0\in[0,1]$ such that $\mM=\mS_{c_0}$, and the restricted flow
$(\mS_c,\tau)$ is Auslander-Yorke chaotic for any $c\in[0,1]$, $c\ne c_0$.
\item[\rm(2)] $\mM$ is not a copy of the base, in which case the restricted flow
$(\mS_c,\tau)$ is Auslander-Yorke chaotic for any $c\in[0,1]$.
\end{itemize}
\item[\rm(iv)] $\wit\mS:=\bigcup_{c\in[0,1]}\mS_c$ is a compact $\tau$-invariant
subset of $\mA$, all its points are sensitive,
the restricted flow $(\wit\mS,\tau)$ is chain recurrent,
%\[
% \mS=\bigcup_{\w\in\W}\big(\{\w\}\times[\alpha_{\mS}(\w),\beta_{\mS}(\w)]\big)\,,
%\]
%where $\alpha_{\mS}\colon\W\to\R$ and $\beta_{\mS}\colon\W\to\R$ are
%respectively lower and upper semicontinuous $\tau$-equilibria, with %$\alfaa(\w)=\alpha_{\mS}(\w)=\beta_{\mS}(\w)=\beta_{\mA}(\w)$
%for any $\w\in\W_c$, and $\alfaa(\w)=\alpha_{\mS}(\w)<\beta_{\mS}(\w)=\beta_{\mA}(\w)$
%for any $\w\in\W_{AY}$; that is,
$\mA_\w=\wit\mS_\w$ for every $\w\in\W_{AY}\,\cup\,\W_c$,
$\wit\mS:=\Supp\wit\eta$ for the measure $\wit\eta\in\mminv(\mA,\tau)$
given by ${\displaystyle\int_\mA h(\w,x)\,d\wit\mu:
=\int_0^1\int_\W h(\w,\eta_c(\w))\,dm\,dc}$ for $h\in C(\mA,\R)$,
and there exists a dense $\tau$-invariant subset $\wit\mX\subseteq\wit\mS$ of
$\tau$-generic points.
\end{itemize}
\end{teor}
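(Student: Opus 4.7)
The plan is to reduce everything to Theorem~\ref{2.teorAYsec3}, applied simultaneously to the family $\{\eta_c:c\in[0,1]\}$; the boundedness of the linear cocycle $\sup_{t\in\R}\exp\int_0^t a(\ws)\,ds<\infty$ on the full-measure set $\W^a$ granted by $a\in\mR_m(\W)$, together with the inclusion $\mA\subseteq\W\times[r_1,r_2]$ along $m$-almost every base orbit (Theorem~\ref{3.teorlineal}), will let me pass properties of $\alfaa,\betaa$ to the combinations $\eta_c=c\,\alfaa+(1-c)\,\betaa$.

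First I would verify the three bullet hypotheses of Theorem~\ref{2.teorAYsec3} for each $\eta_c$. Let $\W_l$ be the set provided by Theorem~\ref{3.teorlineal}: for $\w\in\W_l$ and every $t\in\R$, $\alfaa(\wt),\betaa(\wt)\in[r_1,r_2]$, so $g$ vanishes along $\mA_{\wt}$ and \eqref{3.familia}$_\w$ reduces to the affine equation $x'=a(\wt)\,x+b(\wt)$; affine combinations of its solutions are again solutions, so $\eta_c$ is a $\tau$-equilibrium on $\W_l$. A joint continuity point is any $\w_\eta\in\W_c$ where $\alfaa(\w_\eta)=\betaa(\w_\eta)$, and the graph of $\eta_c$ lies in $\mA$ by construction. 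Theorem~\ref{2.teorAYsec3}(ii) then yields (i). For (ii), I would apply Lusin's theorem jointly to $\alfaa|_{\W_l}$ and $\betaa|_{\W_l}$ to extract a compact set $\mK\subseteq\W_l$ with $m(\mK)>0$ on which both semicontinuous functions are continuous, so every $\eta_c|_\mK$ is continuous. Running the argument of Theorem~\ref{2.teorAYsec3}(iii) with this single $\mK$ (the intermediate sets $\mK_{m,k}$ and the Birkhoff exceptional set depend only on $\mK$, not on $\eta$) produces one $\sigma$-invariant $\W_{AY}\subseteq\W_l$ of full measure that serves for every $c\in[0,1]$; \eqref{3.Sc}, transitivity, and the pinched property at $\w_\eta\in\W_c$ follow exactly as in that theorem.

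For (iii) I would apply Theorem~\ref{2.teorAYsec3}(iv) to each $\mS_c$: either the base is equicontinuous and $\mS_c$ is a copy of the base (hence a $\tau$-minimal set and, by uniqueness, equal to $\mM$), or $(\mS_c,\tau)$ is Auslander--Yorke chaotic. In case~(1), $\mM=\{\wit\eta\}$ for some continuous $\wit\eta$ with $\wit\eta(\w)\in[\alfaa(\w),\betaa(\w)]$; writing $\wit\eta=c_0(\w)\,\alfaa+(1-c_0(\w))\,\betaa$ on $\{\alfaa<\betaa\}$ and noting that $\wit\eta-\betaa$ and $\alfaa-\betaa$ satisfy the same linear equation \eqref{3.eclineal}$_\w$ on $\W_l$, the ratio $c_0(\w)$ is $\tau$-invariant and hence $m$-a.e.\ constant by ergodicity of $m$. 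The resulting $c_0\in[0,1]$ is unique because Theorem~\ref{3.teorSup0-pinched}(v) forces $\alfaa<\betaa$ on a set of positive measure, and for $c\ne c_0$ the graphs of $\eta_c$ and $\wit\eta$ differ on a set of positive measure, so $\mS_c\ne\mM$; hence $\mS_c$ is not a copy of the base and must be Auslander--Yorke chaotic. In case~(2) no $\mS_c$ can be a copy of the base (else a further minimal set contradicting uniqueness of $\mM$), and Auslander--Yorke chaos holds for every $c$.

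For (iv) I would set $\wit\mu:=\int_0^1\mu_c\,dc$, a $\tau$-invariant regular Borel measure on $\mA$. For any open $\mU\subset\mA$ the map $c\mapsto\mu_c(\mU)=m(\{\w:(\w,\eta_c(\w))\in\mU\})$ is lower semicontinuous in $c$ because $\eta_{c'}\to\eta_c$ pointwise as $c'\to c$; this gives $\mS_c\subseteq\Supp\wit\mu$ for every $c$, hence $\wit\mS\subseteq\Supp\wit\mu$, and any open set of positive $\wit\mu$-measure meets some $\mS_c$, so $\Supp\wit\mu\subseteq\overline{\wit\mS}$. To show $\wit\mS$ is itself closed, I fix $\bar\w\in\W_{AY}\subseteq\W^a$ and use \eqref{3.Sc}: if $(\w_*,x_*)=\lim_n\tau_{t_n}(\bar\w,\eta_{c_n}(\bar\w))$ with $c_n\to c_*$, then the linear regime on $\W_l$ and the finiteness of $\sup_{t\in\R}\exp\int_0^t a(\bar\w\pu s)\,ds$ yield $|x(t_n,\bar\w,\eta_{c_n}(\bar\w))-x(t_n,\bar\w,\eta_{c_*}(\bar\w))|\to 0$, so $(\w_*,x_*)=\lim_n\tau_{t_n}(\bar\w,\eta_{c_*}(\bar\w))\in\mS_{c_*}\subseteq\wit\mS$. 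The identity $\mA_\w=\wit\mS_\w$ for $\w\in\W_{AY}\cup\W_c$ is immediate by sweeping $c\in[0,1]$ (the map $c\mapsto\eta_c(\w)$ covers $[\alfaa(\w),\betaa(\w)]$); chain recurrence of $(\wit\mS,\tau)$ follows from Remark~\ref{3.notachain} since $\mM$ is the unique minimal set inside $\wit\mS$; and density of $\tau$-generic points follows because each $\mX_c\subseteq\mS_c$ supplied by Theorem~\ref{2.teorAYsec3}(iii) is dense in $\mS_c$. Sensitivity at every $p\in\wit\mS$ is clear whenever $p\in\mS_c$ with $\mS_c$ Auslander--Yorke chaotic; the delicate case $p\in\mM=\mS_{c_0}$ in (iii)(1) is handled by approximating $p$ by $q=(\w,\eta_c(\w))\in\mS_c$ with $\w\in\W_{AY}$ where $H_a(\w)$ is small and $c$ close to $c_0$, and invoking Theorem~\ref{2.teordensidad} to produce a uniform lower density of times at which the orbits of $p$ and $q$ diverge by a definite amount. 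The main obstacle is this last sensitivity claim: balancing $|c-c_0|\,|\alfaa(\w)-\betaa(\w)|$ against $1/H_a(\w)$ to obtain a uniform $\ep>0$ requires careful combination of Theorem~\ref{2.teordensidad} with the essential-infimum-zero property of $H_a$, while the rest of the proof is essentially bookkeeping on top of Theorem~\ref{2.teorAYsec3}.
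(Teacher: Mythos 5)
Your overall architecture matches the paper's: parts (i)--(ii) via Theorem \ref{2.teorAYsec3} with a single Lusin compact set $\mK\subset\W_l$ on which $\alfaa$ and $\betaa$ (hence every $\eta_c$) are continuous, so that the full-measure set $\W_{AY}$ is common to all $c$; part (iii) via the dichotomy of Theorem \ref{2.teorAYsec3}(iv) plus uniqueness of the minimal set; and in (iv) your lower-semicontinuity argument for $c\mapsto\mu_c(\mU)$ is a clean (and correct) alternative to the paper's contradiction argument for $\Supp\wit\mu=\wit\mS$, while your identification of $c_0$ through the a.e.\ constancy of the invariant ratio $(\betaa-\wit\eta)/(\betaa-\alfaa)$ is a legitimate variant of the paper's simpler device of evaluating at a single point $\bw\in\W_{AY}$, where $\alfaa(\bw)<\betaa(\bw)$ makes $c\mapsto\eta_c(\bw)$ injective. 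Also note that in your closedness argument the cocycle bound $\sup_{t\in\R}\exp\int_0^t a(\bw\pu s)\,ds<\infty$ is superfluous: since $\sup_{\w\in\W}|\eta_{c'}(\w)-\eta_c(\w)|=k\,|c'-c|$ with $k=\sup_\w|\alfaa(\w)-\betaa(\w)|$, the functions $\eta_c$ depend on $c$ uniformly in $\w$, which is all the paper uses.

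The genuine gap is the sensitivity claim in (iv), which you yourself flag as ``the main obstacle'' and leave unresolved: your proposed treatment of $p\in\mM=\mS_{c_0}$, balancing $|c-c_0|\,|\alfaa(\w)-\betaa(\w)|$ against $1/H_a(\w)$ and invoking Theorem \ref{2.teordensidad}, is never carried out, and it is unnecessary. The point you miss --- although you use it yourself in part (iii), case (2) --- is that $\mM\subseteq\mS_c$ for \emph{every} $c\in[0,1]$: each $\mS_c$ is a nonempty compact $\tau$-invariant set, hence contains a minimal set, which by Theorem \ref{3.teorSup0-pinched} must be $\mM$. Consequently, in case (iii)(1), $\wit\mS=\bigcup_{c\in[0,1]-\{c_0\}}\mS_c$, so every point of $\wit\mS$ (including every point of $\mM$) lies in some $\mS_c$ with $c\ne c_0$ on which the restricted flow is Auslander-Yorke chaotic, hence $\ep_c$-sensitive; and a point that is $\ep$-sensitive for the flow restricted to $\mS_c$ is $\ep$-sensitive in any larger space, since the witnessing nearby points with separating orbits already lie in $\mS_c$ (see Remark \ref{2.notasuno}.2). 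This is exactly how the paper disposes of the sensitivity assertion in one line; with this substitution your proposal closes, and no estimate involving $H_a$ or the density results of Theorem \ref{2.teordensidad} is needed at this step.
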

\begin{proof}
(i) Theorem \ref{3.teorlineal} allows us to assert that
\[
\begin{split}
 x(t,\w,\eta_c(\w))&=x(t,\w,c\,\alfaa(\w)+(1-c)\,\betaa(\w))\\
 &=x_l(t,\w,c\,\alfaa(\w)+(1-c)\,\betaa(\w))\\
 &=c\,x_l(t,\w,\alfaa(\w))+
 (1-c)\,x_l(t,\w,\betaa(\w))\\
 &=c\,\alfaa(\wt)+(1-c)\,\betaa(\wt)=\eta_c(\wt)
\end{split}
\]
for all $t\in\R$ and $\w\in\W_l$. Therefore, $\eta_c$ satisfies the
conditions assumed in Theorem~\ref{2.teorAYsec3}, whose point (ii) proves (i).
\smallskip\par
(ii) By reviewing the proof of Theorem \ref{2.teorAYsec3}(iii),
we first check that we can take as starting point a compact set $\mK\subset\W_l$
such that $\alfaa,\,\betaa\colon\mK\to\R$ are continuous, so that also
$\eta_c\colon\mK\to\R$ is continuous for all $c\in[0,1]$. Second,
we observe that this property combined with the $\sigma$-invariance of
$\W_l$ ensures that the set $\W_{AY}:=\W_*$ constructed
from $\mK$ turns out to be common for all $c\in[0,1]$, and is contained
in $\W_l$. Therefore, the assertions in (ii) follow from
Theorem \ref{2.teorAYsec3}(iii).
\smallskip\par
(iii) Assume that $\mM$ is a copy of the base. We fix $\bw\in\W_{AY}$ and
choose the unique $c_0\in[0,1]$ such that $\mM_{\bw}=\{\eta_{c_0}(\bw)\}$.
Then, \eqref{3.Sc} ensures that $\mS_{c_0}=
\mO_\tau(\bw,\eta_c(\bw))\subset\mM$, so that $\mS_{c_0}=\mM$.
In addition, if $c\in[0,1]-\{c_0\}$, then
$(\bw,\eta_c(\bw))\in\mS_c-\mM$,
and hence $\mS_c\varsupsetneq\mM$:
these sets $\mS_c$ are not copies of the base.
In the case that $\mM$ is not a copy of the base,
$\mS_c\supseteq\mM$ is not a copy of the base for any
$c\in[0,1]$. According to Remark \ref{2.notasAY}.2, the restricted flows
$(\mS_c,\tau)$ are Auslander-Yorke chaotic whenever
$\mS_c$ is not a copy of the base. This proves the assertions in (iii).
(Incidentally, note that, in the first situation, $(\mS_{c_0},\tau)$
is also Auslander-Yorke chaotic unless the base flow $(\W,\sigma)$ is
equicontinuous.)
\smallskip\par
(iv) Let us check that $\wit\mS:=\bigcup_{c\in[0,1]}\mS_c$ is closed.
We fix $\bw\in\W_{AY}$ and
take $(\w_0,x_0):=\lim_{n\to\infty}(\w_nx_n)$ with $(\w_n,x_n)\in\mS_{c_n}=
\mO_{\tau}(\bw,\eta_{c_n}(\bw))$. Let us take a subsequence $(c_j)$ of $(c_n)$
such that there exists $c_0:=\lim_{j\to\infty}c_j$. We will prove that
$(\w_0,x_0)\in\mO_{\tau}(\bw,\eta_{c_0}(\bw))$.
We call $k:=\sup_{\w\in\W}|\alfaa(\w)-\betaa(\w)|$ and note that
$\sup_{\w\in\W}|\eta_{c_j}(\w)-\eta_{c_0}(\w)|=k\,|c_j-c_0|$.
For each $j\in\N$, we look for $t_j>0$ such that
$\dist_{\W\times\R}\big((\w_j,x_j),$ $(\bw{\cdot}t_j,\eta_{c_j}(\bw{\cdot}t_j))\big)
<1/j$. Then, $\dist_{\W}\big(\w_0,\bw{\cdot}t_j\big)\le
\dist_{\W}\big(\w_0,\w_j\big)+\dist_{\W}\big(\w_j,\bw{\cdot}t_j\big)$,
with limit 0; and $|x_0-\eta_{c_0}(\bw{\cdot}t_j)|\le
|x_0-\eta_{c_j}(\bw{\cdot}t_j)|+|\eta_{c_j}(\bw{\cdot}t_j)-\eta_{c_0}(\bw{\cdot}t_j)|
\le |x_0-\eta_{c_j}(\bw{\cdot}t_j)|+k\,|c_j-c_0|$, also with limit 0.
That is, $(\w_0,x_0)=\lim_{j\to\infty}(\bw{\cdot}t_j,\eta_{c_0}(\bw{\cdot}t_j))
\in\mO_{\tau}(\bw,\eta_{c_0}(\bw))=\mS_{c_0}\subseteq\wit\mS$.
\par
Therefore, $\wit\mS$ is closed, and hence, since $\wit\mS\subseteq\mA$,
it is a compact pinched set. Observe that if we are in the situation (1)
of point (iii), then $\mS=\bigcup_{c\in[0,1]-\{c_0\}}\mS_c$, since
$\mS_{c_0}=\mM\subset\mS_c$ for any $c\in[0,1]$. Therefore,
all the points if $\mS$ are sensitive (see Remark \ref{2.notasuno}.2).
Clearly, $\wit\mS$ is $\tau$-invariant, since each set $\mS_c$ is $\tau$-invariant.
Consequently, it is chain recurrent: see
Remark~\ref{3.notachain}. If $\w\in\W_{AY}$ then $\eta_c(\w)\in\wit\mS_{\w}$ for
any $c\in[0,1]$, and $\mA_\w=[\alfaa(\w),\betaa(\w)]=[\eta_0(\w),\eta_1(\w)]
\subseteq\wit\mS_\w\subseteq\mA_\w$, so that the sections coincide.
If $\w\in\W_c$, then $\mA_\w$ is a singleton, and
hence $\mA_\w=\wit\mS_\w$ also in this case.
\par
Note now that ${\displaystyle\int_\mA h(\w,x)\,d\wit\mu
=\int_0^1\int_\mA h(\w,x)\,d\eta_c\,dc}$ for $h\in C(\mA,\R)$.
It is easy to deduce from this property that $\wit\mu$ is a $\tau$-invariant
(regular) measure, and from the regularity that
$\wit\mu(\mK)
\ge\int_0^1\mu_c(\mK)\,dc$ for any compact set $\mK\subset\mA$.
In particular,
$\wit\mu(\wit\mS)=1$, which in turn ensures that $\Supp\wit\mu
\subseteq\wit\mS$. To check that $\Supp\wit\mu\supseteq\wit\mS$,
we assume for contradiction that
$\mU:=\wit\mS-\Supp\wit\mu$ is nonempty, choose $(\w_0,x_0)\in\mU$
look for an open set $\mV\subset\W\times\R$ such that $\mU=\mV\cap\wit\mS$,
and take $\delta>0$ such that $\mB_{\W\times\R}((\w_0,x_0),\delta)\subseteq\mV$.
Now we look for $c_0\in[0,1]$ such that $(\w_0,x_0)\in\mS_{c_0}$,
take $\bw\in\W_{AY}$, and look for $t>0$ such that
$\dist_{\W\times\R}\big((\w_0,x_0),(\bw{\cdot}t,\eta_{c_0}(\bw{\cdot}t))\big)<\delta/2$.
Then, $\dist_{\W\times\R}\big((\w_0,x_0),(\bw{\cdot}t,\eta_c(\bw{\cdot}t))\big)<
\delta/2+|\eta_{c_0}(\bw{\cdot}t)-\eta_c(\bw{\cdot}t)|<\delta$ if
$c$ is close enough to $c_0$, so that $(\w_0,x_0)\in\mU_c:=\mV\cap\mS_c$
for these values of $c$. Therefore, $\mu_c(\mU)\ge
\mu_c(\mU_c)>0$ for a set of values of $c$ with positive Lebesgue measure,
which ensures that $\mu(\mU)>0$, impossible.
\par
It remains to prove that the subset of $\tau$-generic points of $\wit\mS$ is
dense. Let us take an open set $\mU$ of $\wit\mS$, so that $\wit\mu(\mU)>0$.
Since the set $\wit X\subseteq\wit\mS$ of $\tau$-generic points has complete measure,
$\wit\mu(\wit\mX\cap\mU)>0$, and hence there are generic points in $\mU$.
Clearly the subset of generic points of $\wit\S$
is $\tau$-invariant. The proof is complete.
\end{proof}
Note that, unlike the set $\W_{LY}$ of Theorem \ref{3.teorSup0-chaosLY},
the set $\W_{AY}$ of Theorem \ref{3.teorSup0-chaosAY} depends on the measure
$m$ of its statement.
\par
Let us make a short analysis of the previous results.
Regarding Li-Yorke chaos, we point out again that the
set of Li-Yorke pairs that we detect is incomparably larger
than what Definition \ref{2.defiLiYorke} requires.
More precisely, as Theorem \ref{3.teorSup0-chaosLY}(i) proves,
for $m$-almost every point of $\W$ we obtain a scrambled set which
can be identified with a nondegenerate interval, incomparably
larger than a simply uncountable set.
\par
Moreover, Theorem \ref{3.teorSup0-chaosLY}(ii) shows that,
for $m$-almost every point $\w\in\W$, the set
of positive values of time at which the forward
$\tau$-semiorbits of points in $\{\w\}\times\mA_\w$
seem to coincide (or are \lq\lq indistinguishable")
has positive density in $\R^+$; and the same property holds
for the set of positive values of time at which
the semiorbits are \lq\lq distinguishable".
\par
Observe also that, under the hypotheses of Theorem \ref{3.teorSup0-chaosLY},
a function $a\in C(\W,\R)$ may belong to the set $\mR_{\wit m}(\W)$ for a measure
$\wit m\in\mmerg(\W,\sigma)$ different from $m$. This is in fact the case
whenever $\wit m(\{\w\in\W\,|\;\sup_{t\le 0}\int_0^t a(\ws)\,ds<\infty\})=1$,
since this property combined with $\int a(\w)\,d\wit m\le 0$ (in turn
guaranteed by Theorem \ref{2.teorespectro}) and Birkhoff's ergodic
theorem ensures that $\int a(\w)\,d\wit m=0$. Therefore, for each one of these measures,
$\wit m(\W_{LY})=1$, where $\W_{LY}$ is
the set provided by Theorem \ref{3.teorSup0-chaosLY}.
Similartly, if $\wit m\in\mminv(\W,\sigma)$ and
$\wit m(\{\w\in\W\,|\;\sup_{t\le 0}\int_0^t a(\ws)\,ds<\infty\})=1$,
we have $\wit m(\W_{LY})=1$, as deduced from the decomposition of
$\wit m$ in $\sigma$-ergodic measures described in Subsection
\ref{2.subsecbasic}.
In some cases, $\W_{LY}$ is a set of complete measure:
see Theorem \ref{2.teornovacio}(ii).
\par
These properties show the physical observability, both in time
and state, of the type of Li-Yorke chaos that we detect on the global attractor.
\par
Coming now to the Auslander-Yorke chaos detected in almost all
(or all) set $\mS_c=\Supp\mu_c$, Theorem~\ref{2.teorAYsec3}(iii)
shows that $\mS_c$ contains a $\tau$-invariant subset $\mX_c$
with full measure $\mu_c$ composed of $\mu_c$-generic points with
dense forward $\tau$-semiorbits.
%Birkhoff's ergodic theorem
%and the ergodicity of $\mu_c$ allow us to assume that all the
%points of $\mX_c$ are generic (that is,
%$\lim_{t\to\infty}(1/t)\int_0^t f(\tau(s,\w_0,x_0))\,ds=
%\int_\mA f(\w,x)\,d\mu_c$
%for every $f\in C(\mA,\R)$ and $(\w_0,x_0)\in\mX_c$).
Since the orbit of a generic point is composed by generic
points, the orbit of each point of $\mX_c$ provides a
dense subset of $\mS_c$ of generic points.
The natural extension of periodic point
for autonomous or time-periodic systems to non periodic ones
is that of generic point. Hence, as indicated
in \cite{glwe4}, the type of
chaos detected on the sets $\mS_c$ extends the classical one
of \cite{deva} (which requires transitivity, sensitivity,
and the existence of a dense set of periodic points).
\par
Besides this,
as Theorem \ref{3.teorSup0-chaosAY}(iv) shows,
the union $\wit\mS$ of all these possibly Auslander-Chaotic
sets (perhaps one of them is not): is composed by sensitive
(not Lyapunov-stable: see Remark \ref{2.notasuno}.2) points;
although it is not transitive, it is chain recurrent, which
according to \cite[Theorem A]{frse} (see also \cite{conl})
ensures that it is an
{\em abstract omega limit set} (that is, $(\wit\mS,\tau)$ is
topologically conjugate to the restriction of a flow
on a compact space to one of its omega limit sets);
it is the support
of a $\tau$-invariant measure; and it has a dense subset of
$\tau$-generic points. One can
also consider these facts enough to talk about a certain type of
chaos on $\mS$, again opposed to the idea of stability, and
again related to the idea of \cite{deva}.
Finally, $\wit\mS$ fills an
\lq\lq important part"~of $\mA$. More precisely,
$\wit\mS_\w=\mA_\w$ in a $\sigma$-invariant residual set of points with full
measure $m$: the set $\W_c\cup\W_{AY}$. This property shows that also
this chaotic phenomenon has physical relevance. Observe also that
$(\wit\mS,\tau)$ is Li-Yorke chaotic in measure, since for every
$\w$ in the set $\W_{LY}\cap\W_{AY}$ (with full measure $m$),
$\wit\mS_\w=\mA_\w$, and hence $\{\w\}\times\wit\mS_\w$ is
a scrambled set: see Theorems \ref{3.teorSup0-chaosLY}(i)
and \ref{3.teorSup0-chaosAY}(iv).
\par
Let us finally recall that there are functions in $C(\W,\R)$
which satisfy the hypotheses required on $a$ in Theorems \ref{3.teorSup0-chaosLY}
and \ref{3.teorSup0-chaosAY} (namely, $\sup\Sigma_a=0$ and $a\in\mR_m(\W)$
for a measure $m\in\mmerg(\W,\sigma)$), and that
the set of functions $a$ satisfying these two conditions
coincides with that of the functions
$a$ such that $\sup\Sigma_a=0$ and $a\in\mR_m(\W)$ for a measure
$m\in\mminv(\W,\sigma)$. Theorem \ref{2.teornovacio} proves these assertions.
\par
We complete the paper with an easy extension of \cite[Corollary 4.5]{gljk},
which refers to a quasiperiodically forced map
$f\colon\S^1\!\times\![a,b]\to\S^1\!\times\![a,b]$ inducing the discrete semiflow $(\S^1\!\times\![a,b],\phi)$ given by $\phi(n,\w,x):=f^n(\w,x)$.
The authors establish the sensitivity of
$(\S^1\!\times\![a,b],\phi)$ under certain conditions which the next result
adapts to out setting.
\begin{prop}
Assume that $g$ satisfies \hyperlink{g1}{\bf g1}, \hyperlink{g2}{\bf g2}
and \hyperlink{g3}{\bf g3}, let $(\W\times\R,\tau)$ be the flow induced
by the family \eqref{3.familia}, and let $\mA$, $\alfaa$ and $\betaa$
be provided by Theorem~{\rm\ref{3.teorexisteA}}. Assume also that $\mA$ is
pinched, and that the semicontinuous functions $\alfaa,\betaa\colon\W\to\R$ are
not continuous. Then, the flow $(\W\times\R,\tau)$ is sensitive.
\end{prop}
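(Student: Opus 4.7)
The plan is to extract a universal sensitivity constant $\ep>0$ from the failure of continuity of $\alfaa,\betaa$, and then, for every $(\w_0,x_0)\in\W\times\R$ and every $\delta>0$, to construct a nearby initial condition whose forward orbit separates from that of $(\w_0,x_0)$ by more than $\ep$ at some positive time.

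First, I would fix the constant. Theorem~\ref{3.teortapas}(ii) together with the pinched hypothesis identifies $\W_c=\{\w\in\W:\alfaa(\w)=\betaa(\w)\}$ as the nonempty residual $\sigma$-invariant set of common continuity points of the two covers, while the hypothesis that neither cover is continuous forces $\W_c\varsubsetneq\W$. Since $\alfaa$ is lower semicontinuous, $\betaa$ upper semicontinuous, and $\W$ compact, the upper semicontinuous difference $\betaa-\alfaa$ attains a strictly positive maximum $k>0$ at some $\w^*\in\W$. I set $\ep:=k/4$.

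Second, I would construct the companion. By minimality of $(\W,\sigma)$, there exists $T_n\uparrow\infty$ with $\w^*\cdot(-T_n)\to\w_0$. Theorem~\ref{3.teorexisteA}(iii) guarantees that the extremal points $(\w^*,\alfaa(\w^*))$ and $(\w^*,\betaa(\w^*))$ lie on globally defined bounded $\tau$-orbits contained in $\mA$; pulling them back $T_n$ units in time yields fiber-ordered points $P_n^\pm=(\w^*\cdot(-T_n),z_n^\pm)\in\mA$ whose base coordinates converge to $\w_0$ and whose fibers, along a subsequence, converge to $z^-\le z^+$ in $\mA_{\w_0}$. The triangle inequality $|x_0-z^-|+|x_0-z^+|\ge|z^+-z^-|$ places $x_0$ at fiber distance at least $(z^+-z^-)/2$ from one of $z^\pm$, so I take the companion to be the $P_n^\pm$ opposite to the nearer one. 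For $n$ large it lies within $\delta$ of $(\w_0,x_0)$ provided $|z^+-z^-|<2\delta$ (the complementary case being handled separately since already at time $0$ the distance exceeds $\ep$). At time $T_n$ the companion's image is either $(\w^*,\alfaa(\w^*))$ or $(\w^*,\betaa(\w^*))$ exactly, while $\tau_{T_n}(\w_0,x_0)$, by attraction to $\mA$ from Theorem~\ref{3.teorexisteA}(i) combined with the base convergence, should lie close to the opposite extremal of $\mA_{\w^*}$, giving a total fiber separation at time $T_n$ exceeding $k-\mathrm{errors}>\ep$.

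The main obstacle will be controlling the coupling between the two asymptotic relations $\w^*\cdot(-T_n)\to\w_0$ and $\w_0\cdot T_n\to\w^*$, which are not equivalent without equicontinuity of $(\W,\sigma)$. Resolving this cleanly will likely require either a diagonal extraction of $T_n$ exploiting compactness of $\W$ and $\mA$, or an appeal to chain recurrence of $(\mA,\tau)$ established in Theorem~\ref{3.teorSup0-pinched}(vi), so as to produce orbit pieces joining neighborhoods of $\w_0$ and $\w^*$ in both time directions. A secondary point is the handling of initial data $(\w_0,x_0)\notin\mA$, which reduces, after a finite transient absorbed by the attraction rate and by fiber-monotonicity, to the case $x_0\in\mA_{\w_0}$ treated above. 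These technical reductions form the delicate core of the proof; the rest is an orchestration of attraction of $\mA$, fiber-monotonicity, and the semicontinuity structure of $\alfaa,\betaa$.
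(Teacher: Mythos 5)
Your construction breaks down at its central step, the case $x_0\in\mA_{\w_0}$. Because $\alfaa$ and $\betaa$ are $\tau$-equilibria, your pulled-back companions are exactly $P_n^-=(\w^*{\cdot}(-T_n),\alfaa(\w^*{\cdot}(-T_n)))$ and $P_n^+=(\w^*{\cdot}(-T_n),\betaa(\w^*{\cdot}(-T_n)))$, whose fiber limits $z^\pm$ along a subsequence lie somewhere in $[\alfaa(\w_0),\betaa(\w_0)]$ --- but nothing places either of them within $\delta$ of $x_0$. For instance both limits may coincide, $z^-=z^+$, while $\mA_{\w_0}$ is a nondegenerate interval and $x_0$ sits at distance much larger than $\delta$ from that common value; then neither $P_n^+$ nor $P_n^-$ is an admissible companion, for any $n$. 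Your escape clause (``the complementary case being handled separately since already at time $0$ the distance exceeds $\ep$'') misreads the definition of sensitivity: one must exhibit a point \emph{inside} the $\delta$-ball around $(\w_0,x_0)$ whose forward orbit later separates; points that are far away at time $0$ prove nothing. The time-coupling problem you flag is equally unresolved: $\w^*{\cdot}(-T_n)\to\w_0$ does not yield $\w_0{\cdot}T_n\to\w^*$, so the claim that $\tau_{T_n}(\w_0,x_0)$ lands near the opposite extremal over $\w^*$ has no support (and the appeal to Theorem \ref{3.teorSup0-pinched}(vi) is not licensed here anyway, since that theorem assumes $\sup\Sigma_a=0$ and \hyperlink{g4t}{\bf\~g4}, neither of which is a hypothesis of this proposition; chain recurrence --- which does hold under pinching alone, by Remark \ref{3.notachain} --- would in any case only provide pseudo-orbits, not the genuine orbit segments your estimate needs). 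Finally, the off-attractor reduction fails outright: if $x_0>\betaa(\w_0)$, fiber-monotonicity and the equilibrium property of $\betaa$ give $x(t,\w_0,x_0)>\betaa(\w_0{\cdot}t)$ for all $t\ge 0$, so the orbit never reaches $\mA$ in finite time; there is no ``finite transient'', and sensitivity at points merely close to $\mA$ is precisely what would have to be proved.

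It is worth noting that the paper runs the argument in exactly the opposite direction. It first disposes of the trivial case in which $(\W,\sigma)$ is itself sensitive; otherwise, by Corollary \ref{2.coroAY}, the base flow is equicontinuous, and this equicontinuity is used in an essential way: adapting \cite[Lemma 4.4]{gljk}, together with the observation that $\inf_{t\ge 0}|x(t,\w_0,x_0)-\betaa(\w_0{\cdot}t)|=0$ for orbits starting above $\mA$, the non-continuity of $\betaa$ (resp.\ of $\alfaa$) produces a uniform $\ep_{\betaa}>0$ (resp.\ $\ep_{\alfaa}>0$) such that \emph{every} point strictly above (resp.\ strictly below) $\mA$ is sensitive. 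The points of $\mA$ itself are then captured not by a direct construction of companions but by a topological sweep: the set $\mT_\ep$ of points admitting, in every neighborhood, a pair of points whose forward orbits separate by more than $\ep$ is closed and contains $(\W\times\R)-\mA$; since a pinched attractor has empty interior, $\mT_\ep=\W\times\R$, and every point of $\mT_\ep$ is $\ep/2$-sensitive. If you wish to salvage your plan, that closure-plus-pinching argument is the missing ingredient that must replace your on-attractor construction, and a flow version of the Glendinning--J\"{a}ger--Keller lemma must replace the pull-back estimate off the attractor.
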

\begin{proof}
The result is trivial if $(\W,\sigma)$ is sensitive. So, there
is not restriction in assuming that this is not the case, which according to
Corollary \ref{2.coroAY} and Definition \ref{2.defiAY}
means that $(\W,\sigma)$ is equicontinuous.
\par
Let $\ww$ be a continuity point for $\betaa$.
Given $(\w_0,x_0)\in\W\times\R$ with $x_0\ge\betaa(\w_0)$,
we look for $(t_n)\uparrow\infty$ with
$\ww=\lim_{n\to\infty}\w_0{\cdot}t_n$ and such that there exists $\wit x:=
\lim_{n\to\infty}x(t_n,\w_0,x_0)\ge\betaa(\ww)$.
Then $(\ww,\wit x)\in\mO_\tau(\w_0,x_0)
\subseteq\mA$, and hence $\wit x=\betaa(\ww)$. This ensures that
$\inf_{t\ge 0}|x(t,\w_0,x_0)-\betaa(\w_0{\cdot}t)|=0$.
The arguments of \cite[Lemma 4.4]{gljk}, which can be adapted
to our setting thanks to the equicontinuity of the base flow,
provide $\ep_{\betaa}>0$ such that all the points $(\w,x)$
above $\mA$, i.e., with $x>\betaa(\w)$, are $\ep_{\betaa}$-sensitive
(see Remark \ref{2.notasuno}.2).
An analogous argument provides $\ep_{\alfaa}>0$ such that all the points
$(\w,x)$ below $x<\alfaa(\w)$, are $\ep_{\alfaa}$-sensitive. Given
$\ep:=\min(\ep_{\alfaa},\ep_{\betaa})$, we define
$\mT_\ep\subseteq\W\times\R$ as the set of points $(\w,x)$ such that
for any $\delta>0$ there exist two points
$(\w_1,x_1),\,(\w_2,x_2)\in\mB_{\W\times\R}((\w,x),\delta)$
%with $\dist_{\W\times\R}((\w_i,x_i),(\w,x))<\delta$ for $i=1,2$
such that
$\sup_{t\ge 0}\dist_{\W\times\R}(\tau(t,\w_1,x_1),\tau(t,\w_2,x_2))>\ep$.
It is easy to check that $\mT_\ep$ is closed and contains all
the $\ep$-sensitive points.
Therefore, $(\W\times\R)-\mA\subset\mT_\ep$, and hence
$(\W\times\R)-\mT_\ep\subset\mA$. But the unique open set
contained in a pinched set is the empty one, so that
$\mT_\ep=\W\times\R$. The proof is completed by checking that any
point in $\mT_\ep$ is $\ep/2\,$-sensitive.
\end{proof}
Observe that if the attractor $\mA$ is a pinched set, then $\alfaa$
(or $\betaa$) is continuous if and only if the unique
$\tau$-minimal set $\mM$ is given by its graph. Consequently,
if the base flow $(\W,\sigma)$ is equicontinuous and if
$\mA$ is pinched, then the flow $(\W\times\R,\tau)$ is sensitive at least
in these two cases:
\begin{itemize}
\item[-] $\mM$ is not a copy of the base;
\item[-] $r_1<r_2$, $\mM\subset\W\times(r_1,r_2)$, and
$\sup_{t\le 0}\int_0^t a(\w_0{\cdot}s)\,ds<\infty$ for a point
$\w_0\in\W$: as seen in the
proof of Theorem \ref{3.teorWc}(ii), in this case the points $(\w,\alfaa(\w))$ and
$(\w,\betaa(\w))$ do not belong to $\mM$ whenever $\w$ belongs to the nonempty
set $\W-\W_c$.
\end{itemize}
\medskip\par\noindent
{\bf Acknowledgement:} Massimo Tarallo died while we were beginning to discuss some
of the results included in this article. His contribution is fundamental in this work.
His friendship, irreplaceable.
%%%%%%%%%%%%%%%%%%%%%%%%%%%%%%%%%%%%%%%%%%%%%%%%%%%%%%%%%%%%%%%%%%%%%%
%%%%%%%%%%%%%%%%%%%%%%%%%%%%%%%%%%%%%%%%%%%%%%%%%%%%%%%%%%%%%%%%%%%%%%
%%%%%%%%%%%%%%%%%%%%%%%%%%%%%%%%%%%%%%%%%%%%%%%%%%%%%%%%%%%%%%%%%%%%%%
%%%%%%%%%%%%%%%%%%%%%%%%%%%%%%%%%%%%%%%%%%%%%%%%%%%%%%%%%%%%%
%%%%%%%%%

\end{document}